\numberwithin{equation}{section}
\newtheorem{thm}{Theorem}[section]
\newtheorem{lem}[thm]{Lemma}
\newtheorem{prop}[thm]{Proposition}
\newtheorem{cor}[thm]{Corollary}
\theoremstyle{definition}
\newtheorem{defn}[thm]{Definition}
\newtheorem{remark}[thm]{Remark}
\newtheorem*{remark*}{Remark}
\newcommand\PP{\mathbb{P}}
\newcommand\FSVT{\operatorname{FSVT}}
\newcommand\DPar{\operatorname{DPar}}
\newcommand\Par{\operatorname{Par}}
\newcommand\RPar{\operatorname{RPar}}
\newcommand\multinom[2]{\left(\! \binom{#1}{#2} \!\right)}
\newcommand\unmarked{\operatorname{unmarked}}
\newcommand\marked{\operatorname{marked}}
\newcommand\MMSVT{\operatorname{MMSVT}}
\newcommand\MRBT{\operatorname{MRBT}}
\newcommand\qand{\quad\mbox{and}\quad}
\newcommand\NN{\mathbb{N}}
\newcommand\col{\operatorname{col}}
\newcommand\vr{\bm{r}}
\newcommand\vs{\bm{s}}
\newcommand\vx{\bm{x}}
\newcommand\va{\bm{\alpha}}
\newcommand\vb{\bm{\beta}}
\newcommand\vt{\bm{t}}
\newcommand\tg{\tilde{g}}
\newcommand\tG{\tilde{G}}
\newcommand\row{\operatorname{row}}
\newcommand\ZZ{\mathbb{Z}}
\newcommand\wt{\operatorname{wt}}
\newcommand\MRPP{{\operatorname{MRPP}}}
\newcommand\RMRPP{{\operatorname{RMRPP}}}
\newcommand\BMRPP{{\operatorname{BMRPP}}}
\newcommand\lm{{\lambda/\mu}}
\newcommand\lmc{{\lambda'/\mu'}}
\title{Refined canonical stable Grothendieck polynomials and their duals,
Part 2}
\author{Byung-Hak Hwang}
\address{School of Mathematics, Korea Institute for Advanced Study,
  South Korea}
\email{byunghakhwang@gmail.com}
\author{Jihyeug Jang}
\address{Department of Mathematics, Sungkyunkwan University, Suwon,
  South Korea}
\email{jihyeugjang@gmail.com}
\author{Jang Soo Kim}
\address{Department of Mathematics, Sungkyunkwan University, Suwon,
  South Korea}
\email{jangsookim@skku.edu}
\author{Minho Song}
\address{Department of Mathematics, Sungkyunkwan University, Suwon,
  South Korea}
\email{smh3227@skku.edu}
\author{U-Keun Song}
\address{Department of Mathematics, Sungkyunkwan University, Suwon,
  South Korea}
\email{sukeun319@gmail.com}
\date{\today}
\begin{document}

\begin{abstract}
  This paper is the sequel of the paper under the same title with part
  1, where we introduced refined canonical stable Grothendieck
  polynomials and their duals with two families of infinite
  parameters. In this paper we give combinatorial interpretations for
  these polynomials using generalizations of set-valued tableaux and
  reverse plane partitions, respectively. Our results extend to their
  flagged and skew versions.
\end{abstract}

\maketitle

\section{Introduction}
\label{sec:introduction}

This paper is a continuation of the study of refined canonical stable
Grothendieck polynomials \( G_\lambda(\vx;\va,\vb) \) and their duals
\( g_\lambda(\vx;\va,\vb) \) introduced in the previous paper
\cite{HJKSS2024}. The main goal of this paper is to give
combinatorial models for \( G_\lambda(\vx;\va,\vb) \) and
\( g_\lambda(\vx;\va,\vb) \) using generalizations of set-valued
tableaux and reverse plane partitions, respectively. In doing so, we
also generalize \( G_\lambda(\vx;\va,\vb) \) and
\( g_\lambda(\vx;\va,\vb) \) to their flagged and skew versions.

We briefly review known results on Grothendieck polynomials relevant
to the current paper. For a more detailed history of Grothendieck
polynomials, see the introduction of the previous paper
\cite{HJKSS2024}.

In 1982, Lascoux and Sch\"{u}tzenberger \cite{LS1982} introduced
Grothendieck polynomials, which are \( K \)-theoretic analogues of
Schubert polynomials. Fomin and Kirillov \cite{FK1996} added a
parameter \( \beta \) to Grothendieck polynomials and studied stable
limits of \( \beta \)-Grothendieck polynomials. The stable
Grothendieck polynomials \( G_\lambda(\vx) \) of Grassmannian type are
indexed by partitions and form a basis of (a completion of) the
symmetric function space.

It is a classical result that the Schur function \( s_\lambda(\vx) \)
has a combinatorial model using semistandard Young tableaux. In 2002,
Buch~\cite{Buch2002} found a similar combinatorial model for
\( G_\lm(\vx) \) by introducing set-valued tableaux. The dual
Grothendieck polynomials \( g_\lambda(\vx) \) are dual to
\( G_\lambda(\vx) \) with respect to the Hall inner product. In 2007,
Lam--Pylyavskyy~\cite{LP2007} found a combinatorial model for
\( g_\lm(\vx) \) using reverse plane partitions. Since then,
Grothendieck polynomials \( G_\lambda(\vx) \) and their duals
\( g_\lambda(\vx) \) have been shown to have many interesting
combinatorial properties and generalizations.

Yeliussizov~\cite{Yeliussizov2017} introduced canonical stable
Grothendieck polynomials \( G^{(\alpha,\beta)}_\lambda(\vx) \) and
their duals \( g^{(\alpha,\beta)}_\lambda(\vx) \) with two parameters
\( \alpha \) and \( \beta \), which behave nicely under the
\( \omega \) involution on the symmetric function space. Motivated by
their study of Brill--Noether varieties in \cite{Chan2021},
Chan--Pflueger~\cite{CP21:grothendieck} introduced Grothendieck
polynomials \( RG_\lambda(\vx;\vb) \) with a family
\( \vb = (\beta_1,\beta_2,\dots) \) of infinite parameters. In a
completely different context, Galashin--Grinberg--Liu~\cite{GGL2016}
introduced dual Grothendieck polynomials \( \tg_{\lambda}(\vx;\vb) \)
with infinite parameters \( \vb \), which turn out to be dual to
\( RG_\lambda(\vx;\vb) \) with respect to the Hall inner product.

In the previous paper \cite{HJKSS2024} we introduced refined
canonical stable Grothendieck polynomials \( G_\lambda(\vx;\va,\vb) \)
and their duals \( g_\lambda(\vx;\va,\vb) \) with two families
\( \va \) and \( \vb \) of infinite parameters, which generalize all
variations of Grothendieck polynomials mentioned so far. In this paper
we find combinatorial models for \( G_\lambda(\vx;\va,\vb) \) and
\( g_\lambda(\vx;\va,\vb) \) using generalizations of set-valued
tableaux and reverse plane partitions, respectively. Our results, in
fact, apply to further generalizations of \( G_\lambda(\vx;\va,\vb) \)
and \( g_\lambda(\vx;\va,\vb) \) described below.

There is another way to generalize Grothendieck polynomials by
imposing ``flag conditions'' on their tableau models, where a flag
condition means that the entries in a row or column are bounded by
specific numbers. Gessel and Viennot \cite{LGV} found a
Jacobi--Trudi-like formula for flagged Schur functions, which are
generating functions for flagged semistandard Young tableaux.
Wachs~\cite{Wachs_1985} gave another proof of this result using
natural recursions coming from flagged semistandard Young tableaux.
Knutson, Miller, and Yong \cite{Knutson2009} showed that Grothendieck
polynomials associated with vexillary permutations are generating
functions for flagged set-valued tableaux.
Matsumura~\cite{Matsumura_2018} introduced flagged Grothendieck
polynomials \( G_{\lambda/\mu,f/g}(\vx) \) as generating functions for
flagged set-valued tableaux and proved their Jacobi--Trudi-like
formula. For the dual case, Grinberg~\cite{Grinberg_private} first
considered flagged versions
\( \tg^{\row(\vr,\vs)}_{\lambda/\mu}(\vx;\vb) \) of
\( \tg_{\lambda/\mu}(\vx;\vb) \), which were further studied by
Kim~\cite{Kim_JT22}.

In this paper, we generalize \( G_\lambda(\vx;\va,\vb) \) and
\( g_\lambda(\vx;\va,\vb) \) to their flagged and skew versions
\( G_\lm^{\row(\vr, \vs)}(\vx;\va,\vb) \),
\( G_{\lambda'/\mu'}^{\col(\vr, \vs)}(\vx;\va,\vb) \),
\( g_\lm^{\row(\vr, \vs)}(\vx;\va,\vb) \), and
\( g_{\lambda'/\mu'}^{\col(\vr, \vs)}(\vx;\va,\vb) \). These
generalize all variations of Grothendieck polynomials mentioned above
as shown in the following table. Here \( \bm{0} = (0,0,\dots) \),
\( \bm{1}=(1,1,\dots) \), \( \va_0 = (\alpha,\alpha,\dots) \) and
\( \vb_0 = (\beta,\beta,\dots) \).

\begin{center}
  \begin{tabular}{|c|c|c|}
    \hline
    Variations of Grothendieck polynomials & introduced in & how to specialize \\ \hhline{|=|=|=|}
    \( G_{\nu/\lambda}(\vx) \) & Buch~\cite{Buch2002} &
      \( G_{\nu/\lambda}(\vx;\bm{0},\bm{1}) \)  \\ \hline
    \( g_{\lambda/\mu}(\vx) \) & Lam--Pylyavskyy~\cite{LP2007} &
      \( g_{\lambda/\mu}(\vx;\bm{0},\bm{1}) \) \\  \hline
    \( G^{(\alpha,\beta)}_\lambda(\vx) \) & Yeliussizov~\cite{Yeliussizov2017} &
        \( G_{\lambda}(\vx;\va_0,-\vb_0) \) \\  \hline
    \( g^{(\alpha,\beta)}_\lambda(\vx) \) & Yeliussizov~\cite{Yeliussizov2017} &
        \( g_{\lambda}(\vx;-\va_0,\vb_0) \) \\  \hline
    \( RG_\lambda(\vx;\vb) \) & Chan--Pflueger~\cite{CP21:grothendieck} &
      \( G_\lambda(\vx;\bm{0},\vb) \) \\  \hline
    \( \tg_{\lambda/\mu}(\vx;\vb) \) & Galashin--Grinberg--Liu~\cite{GGL2016} &
      \( g_{\lambda/\mu}(\vx;\bm{0},\vb) \) \\  \hline
    \( G_{\lambda/\mu,f/g}(\vx) \) & Matsumura~\cite{Matsumura_2018} &
      \( G^{\row(g,f)}_{\lambda/\mu}(\vx;\bm{0},-\vb_0) \) \\  \hline
    \( \tg^{\row(\vr,\vs)}_{\lambda/\mu}(\vx;\vb) \) & Grinberg \cite{Grinberg_private}, Kim~\cite{Kim_JT22} &
      \( g^{\row(\vr,\vs)}_{\lambda/\mu}(\vx;\bm{0},\vb) \) \\  \hline
  \end{tabular}
\end{center}

In what follows we illustrate the main results in this paper. To do
this we recall the definitions of \( G_\lambda(\vx_n;\va,\vb) \) and
\( g_\lambda(\vx_n;\va,\vb) \) in our previous paper
\cite{HJKSS2024} and their Jacobi--Trudi-like formulas; see
Section~\ref{sec:preliminary} for the definitions of undefined terms.

\begin{defn}\label{def:bi_alt_G}
  Let \( \lambda \) be a partition \( \lambda \) with at most \( n \)
  parts. The \emph{refined canonical stable Grothendieck polynomial}
  \( G_\lambda(\vx_n;\va,\vb) \) and the \emph{refined dual canonical
    stable Grothendieck polynomial} \( g_\lambda(\vx_n;\va,\vb) \) are
  defined by
\begin{align}
  \label{eq:defG}
    G_\lambda(\vx_n;\va,\vb) 
    &= \frac{\det
      \left(x_j^{\lambda_i+n-i}
      \dfrac{(1-\beta_1 x_j)\cdots (1-\beta_{i-1} x_j)}
            {(1-\alpha_1 x_j)\cdots (1-\alpha_{\lambda_i} x_j)}
      \right)_{1\le i,j\le n}}
      {\prod_{1\le i<j\le n}(x_i-x_j)},\\
      \label{eq:defg}
    g_\lambda(\vx_n;\va,\vb)
   & = \frac{\det(h_{\lambda_i+n-i}[x_j-A_{\lambda_i-1}+B_{i-1}])_{1\le i,j\le n}}
      {\prod_{1\le i<j\le n}(x_i-x_j)}.
  \end{align}
\end{defn}

In the first part \cite{HJKSS2024} we proved that
\( G_\lambda(\vx_n;\va,\vb) \) and \( g_\lambda(\vx_n;\va,\vb) \) have
Jacobi--Trudi-like formulas.

\begin{thm}[\cite{HJKSS2024}]\label{thm:JT_ab_intro}
  For a partition \( \lambda \) with at most \( n \) parts, we have
  \begin{align}
\label{eq:4}    G_\lambda(\vx_n;\va,\vb)
     &=  C_n \det \left( h_{\lambda_i-i+j}[X_n\ominus (A_{\lambda_i}-B_{i-1}+B_j)] \right)_{i,j=1}^n,\\
\label{eq:5}    g_\lambda(\vx_n;\va,\vb)
    &= \det \left( h_{\lambda_i-i+j}[X_n-A_{\lambda_i-1}+B_{i-1}-B_{j-1}] \right)_{i,j=1}^n,
  \end{align}
  where \( C_n = \prod_{i,j=1}^n (1-\beta_i x_j) \) and \( h_n[Y\ominus Z] = \sum_{a-b=n} h_a[Y] h_b[Z] \). 
\end{thm}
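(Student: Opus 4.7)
The strategy is to factor each bialternant numerator as a matrix product $M\cdot N$, where $\det N = V_n := \prod_{i<j}(x_i-x_j)$ and $M$ agrees, after a unipotent triangular column transformation, with the target Jacobi--Trudi matrix. The unifying plethystic identity, valid because $e_b[X_n-x_j]=0$ for $b\ge n$, is
\[
  h_m[Y-(X_n-x_j)] \;=\; \sum_{p=1}^n (-1)^{n-p}\, e_{n-p}[X_n-x_j]\, h_{m-n+p}[Y].
\]
For $N_{pj} := (-1)^{n-p}e_{n-p}[X_n-x_j]$, the computation $\sum_p x_i^{p-1}\, N_{pj} = \prod_{k\ne j}(x_i-x_k)$ vanishes unless $i=j$, so $(x_i^{p-1})_{i,p}\cdot N$ is diagonal with $(j,j)$-entry $\prod_{k\ne j}(x_j-x_k)$; taking determinants yields $\det N = V_n$.

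For \eqref{eq:5}, apply the identity with $Y=X_n-A_{\lambda_i-1}+B_{i-1}$ and $m=\lambda_i+n-i$, so that $Y-(X_n-x_j)=x_j-A_{\lambda_i-1}+B_{i-1}$ reproduces the bialternant entry. The resulting factorization writes the numerator in \eqref{eq:defg} as $\det M\cdot V_n$ with $M_{ip}=h_{\lambda_i-i+p}[X_n-A_{\lambda_i-1}+B_{i-1}]$, giving $g_\lambda(\vx_n;\va,\vb)=\det M$. The expansion
\[
  h_{\lambda_i-i+j}[X_n-A_{\lambda_i-1}+B_{i-1}-B_{j-1}]
  \;=\; \sum_{p=1}^{j} (-1)^{j-p}\, e_{j-p}[B_{j-1}]\, M_{ip}
\]
then exhibits the matrix in \eqref{eq:5} as $M$ multiplied on the right by a unipotent upper-triangular matrix, so its determinant equals $\det M$.

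For \eqref{eq:4}, first pull out $C_n$ by dividing the $j$-th column of the numerator in \eqref{eq:defG} by $\prod_{k=1}^n(1-\beta_k x_j)$; the entries become
\[
  E_{ij} \;=\; \frac{x_j^{\lambda_i+n-i}}{\prod_{k=1}^{\lambda_i}(1-\alpha_k x_j)\prod_{k=i}^{n}(1-\beta_k x_j)} \;=\; \sum_{r\ge 0} h_r[Y_i]\, x_j^{\lambda_i+n-i+r},
\]
with $Y_i := A_{\lambda_i}+B_n-B_{i-1}$. Substituting the $Y=X_n$ specialization $x_j^m=\sum_p(-1)^{n-p}e_{n-p}[X_n-x_j]\,h_{m-n+p}[X_n]$, interchanging sums, and recognizing $\sum_{r\ge 0}h_r[Y_i]\,h_{m+r}[X_n]=h_m[X_n\ominus Y_i]$ (read off from the generating function $H(t;X_n)\,H(t^{-1};Y_i)$), gives $E=M'N$ with $M'_{ip}=h_{\lambda_i-i+p}[X_n\ominus Y_i]$; hence $G_\lambda(\vx_n;\va,\vb)=C_n\det M'$. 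A unipotent lower-triangular column transformation, extracted from
\[
  \sum_m h_m[X_n\ominus(A_{\lambda_i}-B_{i-1}+B_j)]\, t^m
  \;=\; \Bigl(\sum_m h_m[X_n\ominus Y_i]\, t^m\Bigr)\cdot \prod_{k=j+1}^n(1-\beta_k/t),
\]
finally converts $M'$ into the matrix in \eqref{eq:4} without changing its determinant.

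The main technical difficulty I expect is the bookkeeping around plethystic alphabets and the $\ominus$-operator in the $G$-case, and carefully verifying the triangularity with unit diagonal of both column transformations; once these are in place, the identity $\det N = V_n$ does the substantive work in both formulas simultaneously.
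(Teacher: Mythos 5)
Your proof is correct. Note first that the paper you were given does not actually prove Theorem~\ref{thm:JT_ab_intro}: it is quoted from Part~1 (\cite{Hwang-preprint1}) and used as a black box, so there is no in-document proof to compare against. What you supply is a valid self-contained derivation from the bialternant definitions \eqref{eq:defG}--\eqref{eq:defg}. The key steps all check out: the expansion $h_m[Y-(X_n-x_j)]=\sum_{p=1}^n(-1)^{n-p}e_{n-p}[X_n-x_j]\,h_{m-n+p}[Y]$ is the standard addition formula truncated because $X_n-x_j$ has $n-1$ letters; the computation $\sum_p x_i^{p-1}N_{pj}=\prod_{k\ne j}(x_i-x_k)$ correctly gives $\det N=\prod_{i<j}(x_i-x_j)$ after cancelling the Vandermonde; the column transformations $U_{pj}=(-1)^{j-p}e_{j-p}[B_{j-1}]$ (upper unitriangular, since $e_b[B_{j-1}]=0$ for $b\ge j$) and $L_{pj}=(-1)^{p-j}e_{p-j}[B_n-B_j]$ (lower unitriangular) are exactly right for converting the intermediate matrices $M$, $M'$ into the matrices appearing in \eqref{eq:5} and \eqref{eq:4}; and the generating-function identity $\sum_m h_m[X_n\ominus Z]t^m=H(t;X_n)H(t^{-1};Z)$ justifies both the recognition of $h_m[X_n\ominus Y_i]$ and the final column operation in the $G$-case. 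One small point worth making explicit in a write-up: in the identity $x_j^m=h_m[x_j]$ used for the $G$-case you need $m=\lambda_i+n-i+r\ge 0$, which always holds, and the interchange of the infinite sum over $r$ with the finite sum over $p$ is legitimate degree-by-degree in the formal power series ring. This bialternant-to-Jacobi--Trudi factorization through the matrix $N$ is the classical route for such identities, and is presumably close in spirit to what Part~1 does.
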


The main results in this paper are the following combinatorial
interpretations for \( G_\lambda(\vx_n;\va,\vb) \) and
\( g_\lambda(\vx_n;\va,\vb) \) using ``marked multiset-valued
tableaux'' and ``marked reverse plane partitions'', which are
generalizations of set-valued tableaux and reverse plane partitions,
respectively; see Sections~\ref{sec:flagged G} and \ref{sec:flagged g}
for their precise definitions.

\begin{thm}\label{thm:comb_intro}
  For a partition \( \lambda \), we have
  \begin{align}
\label{eq:6}    G_\lambda(\vx;\va,\vb) &= \sum_{T\in \MMSVT(\lambda)} \wt(T),\\
\label{eq:7}    g_\lambda(\vx;\va,\vb) &= \sum_{T\in\MRPP(\lambda)} \wt(T),
  \end{align}
  where \( \MMSVT(\lambda) \) and \( \MRPP(\lambda) \) are the sets of
  marked multiset-valued tableaux and marked reverse plane partitions
  of shape \( \lambda \), respectively.
\end{thm}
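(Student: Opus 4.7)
The plan is to deduce both identities directly from the Jacobi--Trudi-like formulas (\ref{eq:4}) and (\ref{eq:5}) of Theorem~\ref{thm:JT_ab_intro} by a Lindstr\"om--Gessel--Viennot (LGV) argument combined with a sign-reversing involution on intersecting path configurations.

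For (\ref{eq:6}), I would first give a path-model interpretation of each matrix entry $h_{\lambda_i - i + j}[X_n \ominus (A_{\lambda_i} - B_{i-1} + B_j)]$ in (\ref{eq:4}). Expanding via $h_n[Y \ominus Z] = \sum_{a-b=n} h_a[Y] h_b[Z]$ shows that such an entry is a signed generating function for lattice paths whose horizontal steps carry either an $x$-variable, an $\alpha$-variable (contributing a positive weight via $A_{\lambda_i}$), or a $\beta$-variable (contributing a negative weight via $B_{i-1}$ and $B_j$). Choosing the start/end points of the $i$-th path in the $(\lambda_i, \cdot)$--$(-i, \cdot)$ pattern standard for Jacobi--Trudi, one then applies LGV to express the determinant as a signed sum over $n$-tuples of such paths, on which the usual "swap-at-first-intersection" involution cancels all intersecting tuples. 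The extra prefactor $C_n = \prod_{i,j=1}^n(1-\beta_i x_j)$ is absorbed by allowing an additional "marked $\beta$-entry" on each cell of each column, and the resulting non-intersecting, marked configurations are then translated, cell-by-cell, into marked multiset-valued tableaux as defined in Section~\ref{sec:flagged G}. The semistandard-type conditions imposed in the definition of $\MMSVT(\lambda)$ are exactly what the non-intersection of paths forces on the corresponding tableau entries.

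The argument for (\ref{eq:7}) is parallel but uses (\ref{eq:5}) instead. The entry $h_{\lambda_i - i + j}[X_n - A_{\lambda_i-1} + B_{i-1} - B_{j-1}]$ again expands into signed weighted paths, now with $\alpha$-markings contributing negatively and $\beta$-markings positively, reflecting the duality between $G$ and $g$. After LGV and sign-reversing cancellation, the surviving non-intersecting configurations biject with objects satisfying the monotonicity conditions of $\MRPP(\lambda)$ from Section~\ref{sec:flagged g}, with weights matching $\wt(T)$ term by term. Since the unflagged case of the theorem is the $\vr = \bm{0}$, $\vs = \bm{\infty}$ specialization of the flagged formulas, one may equally well set up the path configurations with arbitrary row bounds from the start, at no extra cost, which in fact yields the flagged skew generalizations advertised in the introduction.

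The main obstacle will be setting up the weight-preserving involutions so that they simultaneously handle two sources of sign: the LGV swap at path intersections and the $\pm$ cancellations arising from the $\ominus$ expansion of $h_n$. Concretely, one must check that every path tuple which is either intersecting \emph{or} contains a cancellable $\alpha$/$\beta$-pair is paired off with another configuration of opposite sign and equal weight, so that the only surviving terms correspond bijectively to genuine MMSVT (resp.\ MRPP) of shape $\lambda$. Once this compatibility between the two involutions is verified, the combinatorial identities follow immediately.
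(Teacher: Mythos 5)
Your route (LGV plus a sign-reversing involution applied directly to the determinants \eqref{eq:4} and \eqref{eq:5}) is genuinely different from the paper's. The paper never touches lattice paths: it introduces the flagged polynomials $G_\lm^{\row(\vr,\vs)}$ and $g_\lm^{\row(\vr,\vs)}$ as tableau generating functions, shows by elementary row operations that the corresponding determinants satisfy the same recurrences (Lemmas~\ref{lem:rec1}--\ref{lem:rec3} and their dual analogues) and initial conditions as the combinatorial sides, and then recovers Theorem~\ref{thm:comb_intro} by specializing $\vr=(1^n)$, $\vs=(n^n)$ and letting $n\to\infty$ (Corollaries~\ref{cor:G=MMSVT} and \ref{cor:g=MRPP}). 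So a successful LGV proof would be a legitimately new argument, and potentially a more bijective one.

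However, as written your proposal has a genuine gap precisely at the point you yourself flag as ``the main obstacle.'' The standard LGV machinery relies on $h_m=0$ for $m<0$ to control which permutations contribute and to make the swap-at-first-intersection involution close up; here the entries are $h_{\lambda_i-i+j}[X_n\ominus(A_{\lambda_i}-B_{i-1}+B_j)]=\sum_{a-b=\lambda_i-i+j}h_a[X_n]\,h_b[A_{\lambda_i}-B_{i-1}+B_j]$, and these do \emph{not} vanish for negative index (the paper explicitly remarks on this when discussing independence of $n$). Consequently every permutation in the determinant contributes, the ``paths'' must carry both positively and negatively weighted steps of unbounded multiplicity, and one must construct a single involution that simultaneously cancels intersecting families \emph{and} the $\pm$ pairs coming from the $\ominus$-expansion, while leaving exactly the MMSVT weights $\vx^{T(i,j)}\alpha_j^{\unmarked(T(i,j))-1}(-\beta_i)^{\marked(T(i,j))}$ (note the shift by $-1$ in the $\alpha$-exponent and the row-versus-column indexing of $\alpha$ and $\beta$, which your path model must reproduce exactly). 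The claim that the prefactor $C_n=\prod_{i,j=1}^n(1-\beta_ix_j)$ is ``absorbed by allowing an additional marked $\beta$-entry on each cell'' is also unsubstantiated: this product ranges over all $n^2$ pairs $(i,j)$, not over cells of $\lambda$, and its minus signs must cancel against specific terms of the determinant. None of these steps is routine, and without an explicit construction and verification of the involution the proof is not complete. I would either carry out that construction in full or switch to the paper's recursion-based argument.
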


Our strategy to prove Theorem~\ref{thm:comb_intro} is as follows.
Instead of proving \eqref{eq:6} and \eqref{eq:7} directly, we consider
their flagged versions \( G_\lm^{\row(\vr, \vs)}(\vx;\va,\vb) \) and
\( g_\lm^{\row(\vr, \vs)}(\vx;\va,\vb) \), see
Definitions~\ref{defn:Grow} and \ref{def:1}, for which we can use
inductive arguments in the same vein as in Wachs \cite{Wachs_1985}.

Using computer experiments, we were able to find suitable
Jacobi--Trudi-like formulas for
\( G_\lm^{\row(\vr, \vs)}(\vx;\va,\vb) \) and
\( g_\lm^{\row(\vr, \vs)}(\vx;\va,\vb) \) that generalize \eqref{eq:4}
and \eqref{eq:5}, respectively; see Theorem~\ref{thm:main_G_intro} below. Thanks to
the flag conditions, \( G_\lm^{\row(\vr, \vs)}(\vx;\va,\vb) \) and
\( g_\lm^{\row(\vr, \vs)}(\vx;\va,\vb) \) have natural recurrences. We
then prove the Jacobi--Trudi-like formulas by showing that the
determinants also have the same recurrences. Although our proofs
require many technical lemmas, they are elementary and we need nothing
more complex than basic techniques in linear algebra such as row
operations.

Our methods also work for column-flagged versions. In summary we prove
the following Jacobi--Trudi-like formulas. Note that \eqref{eq:8} and
\eqref{eq:10} generalize \eqref{eq:4} and \eqref{eq:5}, hence imply
\eqref{eq:6} and \eqref{eq:7}, respectively. We also note that
\eqref{eq:9} is proved under a weaker condition; see
Theorem~\ref{thm:main_G_col}.

\begin{thm}
  \label{thm:main_G_intro}
  Let \( \lambda \) and \( \mu \) be partitions with
  at most \( n \) parts and \( \mu\subseteq\lambda \).
  Let \( \vr=(r_1,\dots,r_n) \) and \( \vs=(s_1,\dots,s_n) \) be positive
  integer sequences of length \( n \).
  Suppose that \( r_i\le r_{i+1} \) and \( s_i\le s_{i+1} \)
  whenever \( \mu_{i}<\lambda_{i+1} \). Then
  \begin{align}
\label{eq:8}    G_\lm^{\row(\vr, \vs)}(\vx;\va,\vb) 
    &= C \det\left(   h_{\lambda_i-\mu_j-i+j}
      [X_{[r_j,s_i]}\ominus(A_{\lambda_i}-A_{\mu_j}-B_{i-1} + B_{j})]\right)_{i,j=1}^n,\\
\label{eq:9}    G_{\lambda'/\mu'}^{\col(\vr, \vs)}(\vx;\va,\vb) 
    &= D \det\left(   e_{\lambda_i-\mu_j-i+j}
    [X_{[r_j,s_i]}\ominus(A_{i-1}-A_{j}-B_{\lambda_i} + B_{\mu_j})]\right)_{i,j=1}^n,\\
\label{eq:10}    g_\lm^{\row(\vr, \vs)}(\vx;\va,\vb)
    &= \det\left(   h_{\lambda_i-\mu_j-i+j}
      [X_{[r_j,s_i]}-A_{\lambda_i-1}+A_{\mu_j}+B_{i-1} - B_{j-1}]\right)_{i,j=1}^n,\\
\label{eq:11}    g_{\lambda'/\mu'}^{\col(\vr, \vs)}(\vx;\va,\vb)
    &= \det\left(   e_{\lambda_i-\mu_j-i+j}
      [X_{[r_j,s_i]}-A_{i-1}+A_{j-1}+B_{\lambda_i-1} - B_{\mu_j}]\right)_{i,j=1}^n,
  \end{align}
  where 
  \[
    C = \prod_{i=1}^{n} \prod_{l=r_i}^{s_i} (1-\beta_i x_l), \qquad
    D = \prod_{i=1}^{n} \prod_{l=r_i}^{s_i} (1-\alpha_i x_l)^{-1}.
  \]
\end{thm}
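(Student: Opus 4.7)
The plan is to prove the four identities \eqref{eq:8}--\eqref{eq:11} by induction on $|\lambda|-|\mu|$, following the Wachs-style strategy outlined in the introduction: flagged tableaux admit natural one-row or one-column recurrences, and the corresponding determinants can be shown to satisfy the same recurrences by elementary row operations. I would concentrate on \eqref{eq:8} as the prototype, since \eqref{eq:10} requires only cosmetic changes and \eqref{eq:9}, \eqref{eq:11} follow by passing to conjugate shapes and dualizing $h \leftrightarrow e$, $\va \leftrightarrow \vb$.

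For \eqref{eq:8}, the base case $\lambda=\mu$ is immediate: both sides equal $1$ under the empty-determinant convention, with $C=1$. For the inductive step, I would decompose a marked multiset-valued tableau of shape $\lm$ with row flag $(\vr,\vs)$ by conditioning on the data in its last row. Summed over all admissible multiset fillings, the row-$n$ contribution factors as
\[
\prod_{l=r_n}^{s_n}(1-\beta_n x_l)\cdot h_{\lambda_n-\mu_n}\bigl[X_{[r_n,s_n]}\ominus(A_{\lambda_n}-A_{\mu_n}-B_{n-1}+B_n)\bigr],
\]
while the remaining rows form a marked multiset-valued tableau of the truncated shape $(\lambda_1,\dots,\lambda_{n-1})/\mu$ with flag $(r_1,\dots,r_{n-1};s_1,\dots,s_{n-1})$. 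The monotonicity hypothesis, invoked exactly when $\mu_{n-1}<\lambda_n$, ensures that the flags of the retained rows remain consistent with those of the removed row, so the induction hypothesis applies. On the algebraic side, I would produce the same factorization by a sequence of elementary row operations that telescope the $n$th row of the determinant against rows $1,\dots,n-1$, reducing it to the row-$n$ factor times the $(n-1)\times(n-1)$ determinant for the truncated shape; the requisite manipulations are consequences of the defining relation $h_k[X\ominus Z]=\sum_{a-b=k}h_a[X]h_b[Z]$ together with the standard recursion $h_k[X\cup\{y\}]=h_k[X]+y\,h_{k-1}[X\cup\{y\}]$ and its analogue in the $Z$-variables.

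The adaptation to \eqref{eq:10} is straightforward: the prefactor $C$ disappears, the $A$-subscripts shift by one, and the combinatorial recurrence now comes from conditioning on the multiset of entries in a row of a marked reverse plane partition. For \eqref{eq:9} and \eqref{eq:11}, I would transpose and perform the analogous induction on columns; the weaker hypothesis needed for \eqref{eq:9}, alluded to in Theorem~\ref{thm:main_G_col}, reflects the fact that column-strictness in the $G$-model is governed by $\alpha$-marked entries and interacts asymmetrically with the flag. The main obstacle will be aligning the combinatorial and algebraic recurrences precisely at the threshold $\mu_i<\lambda_{i+1}$: the telescoping row operations and the one-row tableau recurrences both demand monotonicity of $(\vr,\vs)$ exactly on the overlapping range of consecutive rows, and verifying this match rigorously will require several technical lemmas on the behavior of $h_k[X\ominus Z]$ under incrementing the finite alphabet $X_{[r,s]}$ and the $A$-, $B$-shifts, together with careful bookkeeping of the prefactor $C$ as rows are successively removed.
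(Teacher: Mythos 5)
Your proposed inductive step contains a genuine gap: the generating function for $\MMSVT^{\row(\vr,\vs)}(\lm)$ does \emph{not} factor as (contribution of row $n$) times (generating function of the truncated shape), because consecutive rows interact through the column-strictness condition $\max(T(n-1,j))<\min(T(n,j))$. For instance, with $\lambda=(1,1)$, $\mu=\emptyset$, $\vr=(1,1)$, $\vs=(2,2)$, the entries of the two cells interlace and the sum over tableaux is not a product over rows. The paper only obtains a product decomposition in the degenerate situation $\mu_t\ge\lambda_{t+1}$, where the skew shape splits into pieces occupying disjoint columns (Lemma~\ref{lem:rec1}). In the connected case the correct recurrence is cell-by-cell, not row-by-row: one selects the row $k$ with $s_{k-1}<s_k=\dots=s_n$ and conditions on whether the single cell $(k,\lambda_k)$ contains an entry strictly less than $s_k$; one branch decrements the flag entry $s_k$ and the other deletes that one cell (Lemmas~\ref{lem:rec2} and \ref{lem:rec3}). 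Consequently the induction cannot run on $|\lambda|-|\mu|$ alone — the paper inducts on $n+|\lm|+\sum_i s_i$ — and the matching determinant identity is a three-term relation in the $k$th row of the matrix, obtained from Lemma~\ref{lem:h_m[Z]}, rather than a telescoping of row $n$ against the others.

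Two further points. First, your claim that \eqref{eq:10} requires ``only cosmetic changes'' substantially underestimates the dual case: deleting a cell from a marked reverse plane partition changes the weight of neighboring cells (a $\beta_{i-1}$ contribution depends on whether $T(i,j)=T(i-1,j)$, and markability depends on $T(i,j+1)$), so the recursion does not close on ordinary skew shapes. The paper has to introduce dented partitions and the auxiliary boundary data $I$ (Definition~\ref{defn:MRPPI}, Proposition~\ref{prop:MRPPI}) precisely to record the value $s_i$ imposed at the phantom cell $(i,\lambda_i+1)$ after a deletion. Second, \eqref{eq:9} and \eqref{eq:11} are not obtained by a formal $h\leftrightarrow e$, $\va\leftrightarrow\vb$ duality applied to \eqref{eq:8} and \eqref{eq:10}; they are proved by separate, structurally parallel inductions, and the hypotheses genuinely differ (compare the conditions $r_i-\mu_i\le r_{i+1}-\mu_{i+1}$ and $s_i-\lambda_i\le s_{i+1}-\lambda_{i+1}+1$ in Theorem~\ref{thm:main_G_col}), so this reduction would need its own justification.
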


The rest of this paper is organized as follows. In
Section~\ref{sec:preliminary} we recall necessary definitions and
lemmas. In Section~\ref{sec:flagged G} we prove \eqref{eq:8} and
\eqref{eq:9}. We also show that our result implies the
Jacobi--Trudi-like formula for \( G_{\lambda/\mu,f/g}(\vx) \) due to
Matsumura~\cite{Matsumura_2018}. In Section~\ref{sec:flagged g} we
prove \eqref{eq:10} and \eqref{eq:11}.

\section{Preliminaries}
\label{sec:preliminary}

In this section, we briefly recall necessary definitions and basic
lemmas from the first part \cite{HJKSS2024}. Throughout this
paper, we denote by \( \ZZ \) (resp.~\( \NN \), \( \PP \)) the set of
integers (resp.~nonnegative integers, positive integers). For
\( n\in\NN \) we write \( [n]=\{1,2,\dots,n\} \). For a statement
\( p \), we define \( \chi(p) \) to be \( 1 \) if \( p \) is true and
\( 0 \) otherwise.

\medskip

A \emph{partition} is a weakly decreasing sequence
\( \lambda=(\lambda_1,\dots,\lambda_\ell) \) of positive integers. We
call each \( \lambda_i \) a \emph{part} of \( \lambda \) and write
\( \ell(\lambda) = \ell \) for the number of parts. If \( i>\ell(\lambda) \), we define
\( \lambda_i=0 \). The set of all partitions is denoted by \( \Par \)
and the set of partitions with at most \( n \) parts is denoted by
\( \Par_n \).

Let \(\lambda=(\lambda_1,\dots,\lambda_n)\in \Par_n\). We will
identify the partition \(\lambda\) with its \emph{Young diagram},
i.e.,
\( \lambda= \{ (i,j)\in \ZZ^2 : 1\leq i \leq n, 1 \leq j \leq
\lambda_i \} \). Each \((i,j) \in \lambda\) is called a \emph{cell} of
\(\lambda\). The \emph{transpose} \( \lambda' \) of \( \lambda \) is
the partition whose Young diagram is
\( \{ (j,i)\in \ZZ^2 : 1\leq i \leq n, 1 \leq j \leq \lambda_i \} \).
We use ``the English notation'' to visualize partitions; see
Figure~\ref{fig:YD}.

\begin{figure}
    \ytableausetup{mathmode, boxsize=1em}
  \centering
  \ydiagram{4,3,1} \qquad\qquad\qquad\qquad \ydiagram{3,2,2,1}
  \caption{The Young diagram of \( \lambda=(4,3,1) \) on the left and its transpose \( \lambda'=(3,2,2,1) \)
    on the right.}
  \label{fig:YD}
\end{figure}

For two partitions \( \lambda \) and \( \mu \), we write
\( \mu\subseteq\lambda \) if the Young diagram of \( \mu \) is
contained in that of \( \lambda \). In this case the \emph{skew shape}
\( \lm \) is defined to be the set-theoretic difference
\( \lambda-\mu \) of the Young diagrams. The \emph{size} of \( \lm \)
is the number \( |\lm| \) of cells in \( \lm \). The partition
\( \lambda \) will also be considered as the skew shape
\( \lambda/\emptyset \), where \( \emptyset \) is the empty partition.

\medskip

Let \( \vx=(x_1,x_2,\dots) \) be a set of formal variables. A formal
power series \( f(\vx) \) in \( \vx \) is called a \emph{symmetric
  function} if it is invariant under permuting the variables. Let
\( \vx_n = (x_1,\dots,x_n) \) and define \( f(\vx_n) \) to be the
formal power series \( f(\vx) \) with substitution \( x_i=0 \) for
\( i>n \). In the literature, symmetric functions often mean symmetric
formal power series of bounded degree, but in this paper we do not
require the bounded degree condition. We refer the reader to
\cite[Chapter 7]{EC2} and \cite{Macdonald} for more details on
symmetric functions.

The \emph{complete homogeneous
  symmetric function} \( h_n(\vx) \) and the \emph{elementary
  symmetric function} \( e_n(\vx) \) are defined by
\[
  h_n(\vx) = \sum_{i_1\le\dots\le i_n} x_{i_1} \cdots x_{i_n}, \qquad 
  e_n(\vx) = \sum_{i_1<\dots<i_n} x_{i_1} \cdots x_{i_n},
\]
where we use the convention that \( h_0(\vx) = e_0(\vx) = 1 \) and
\( h_n(\vx) = e_n(\vx) = 0 \) for \( n < 0 \). For a partition
\( \lambda=(\lambda_1,\dots,\lambda_\ell) \), we define
\( h_{\lambda}(\vx) = h_{\lambda_1}(\vx)\cdots h_{\lambda_\ell}(\vx)
\) and
\( e_{\lambda}(\vx) = e_{\lambda_1}(\vx)\cdots e_{\lambda_\ell}(\vx)
\).

\medskip

Plethystic substitution is an important notion in symmetric function
theory, which will play a key role in this paper. The plethystic
substitution \( f[Z] \) is defined for any symmetric function
\( f(\vx) \) and a formal power series \( Z \) in any variables.
However, in this paper, we only need the special cases that
\( f(\vx) \) is either \( h_n(\vx) \) or \( e_n(\vx) \), and \( Z \) is
a sum of variables with coefficients \( 1 \) or \( -1 \). Hence one
may take the following proposition as the definition of plethystic
substitution. We refer the reader to the first part
\cite{HJKSS2024} for the precise definition. See also
\cite{Loehr_2011, Macdonald} for more details on plethystic
substitution.

\begin{prop}
  Let \( Y = y_1 + \cdots + y_r \) and \( Z = z_1 + \cdots +z_s \),
  where \( y_1,\dots, y_r, z_1,\dots , z_s \) are formal
  variables. Then 
  \begin{align*}
    h_n[Y] &= h_n(y_1,\dots,y_r), & e_n[Y] &= e_n(y_1,\dots,y_r),\\
    h_n[-Y] &= (-1)^n e_n[Y], & e_n[-Y] &= (-1)^n h_n[Y],\\
   h_n[Y+Z] &= \sum_{a+b=n} h_a[Y] h_b[Z], & e_n[Y+Z] &= \sum_{a+b=n} e_a[Y] e_b[Z],\\
   h_n[Y-Z] &= \sum_{a+b=n} h_a[Y] h_b[-Z], & e_n[Y-Z] &= \sum_{a+b=n} e_a[Y] e_b[-Z].
  \end{align*}
  Here, it is possible that \( r=\infty \) or \( s=\infty \). If
  \( r=\infty \), then it means that \( Y = y_1 + y_2 + \cdots \) and
  \( h_n[Y] = h_n(y_1,y_2,\dots) \).
\end{prop}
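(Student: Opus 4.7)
The plan is to prove each identity by reducing everything to the power-sum description of plethystic substitution and then applying the standard generating-function identities for $h_n$ and $e_n$. Recall that the ring of symmetric functions is freely generated by the power sums $p_1,p_2,\dots$, so any symmetric function $f$ may be written uniquely as a polynomial $f=F(p_1,p_2,\dots)$, and by the definition in \cite{Hwang-preprint1} one has $f[Z]=F(p_1[Z],p_2[Z],\dots)$, where for a formal $\ZZ$-linear combination $Z=\sum_i c_i M_i$ of monomials $M_i$ one sets $p_k[Z]=\sum_i c_i M_i^k$. In particular, for $Y=y_1+\cdots+y_r$ and $Z=z_1+\cdots+z_s$ the following hold directly from the definition:
\[
p_k[Y]=\sum_{i=1}^r y_i^k,\qquad p_k[-Y]=-\sum_{i=1}^r y_i^k,\qquad p_k[Y+Z]=p_k[Y]+p_k[Z].
\]

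The key tool is the well-known identity in $\QQ[p_1,p_2,\dots][[t]]$,
\[
H(t):=\sum_{n\ge 0} h_n\,t^n=\exp\!\left(\sum_{k\ge 1}\frac{p_k}{k}t^k\right),\qquad
E(t):=\sum_{n\ge 0} e_n\,t^n=\exp\!\left(\sum_{k\ge 1}\frac{(-1)^{k-1}p_k}{k}t^k\right),
\]
which can be proved by logarithmic differentiation together with Newton's identities, or by expanding the product $\prod_i 1/(1-x_i t)$ and comparing coefficients. Applying plethystic substitution coefficient-wise in $t$, the first row of the proposition follows from
\[
\sum_{n\ge 0} h_n[Y]\,t^n=\exp\!\left(\sum_{k\ge 1}\frac{1}{k}\Big(\sum_{i=1}^r y_i^k\Big)t^k\right)=\prod_{i=1}^r\frac{1}{1-y_i t},
\]
which is the generating function for $h_n(y_1,\dots,y_r)$; the analogous computation with $E(t)$ gives $e_n[Y]=e_n(y_1,\dots,y_r)$. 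For the second row, substituting $p_k\mapsto -\sum_i y_i^k$ into $H(t)$ produces $\prod_i(1-y_i t)=\sum_n(-1)^n e_n(y_1,\dots,y_r)t^n$, yielding $h_n[-Y]=(-1)^n e_n[Y]$; and substituting into $E(t)$ produces $\prod_i 1/(1+y_i t)=\sum_n (-1)^n h_n(y_1,\dots,y_r)t^n$, yielding $e_n[-Y]=(-1)^n h_n[Y]$.

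For the third row, the additivity $p_k[Y+Z]=p_k[Y]+p_k[Z]$ makes the exponential factor:
\[
\sum_{n\ge 0} h_n[Y+Z]\,t^n=\exp\!\left(\sum_{k\ge 1}\frac{p_k[Y]}{k}t^k\right)\exp\!\left(\sum_{k\ge 1}\frac{p_k[Z]}{k}t^k\right)=\Big(\sum_{a}h_a[Y]t^a\Big)\Big(\sum_{b}h_b[Z]t^b\Big),
\]
and extracting the coefficient of $t^n$ gives the claimed convolution; the same argument with $E(t)$ handles $e_n[Y+Z]$. Finally, writing $Y-Z=Y+(-Z)$ and applying the third-row identity together with the second-row identity immediately gives $h_n[Y-Z]=\sum_{a+b=n}h_a[Y]h_b[-Z]$ and the corresponding formula for $e_n$.

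There is no real obstacle here beyond bookkeeping; every step is a direct formal computation. The only small point that deserves care is the case $r=\infty$ or $s=\infty$, but the identities hold in the formal power series ring in the $y_i$ and $z_j$ because, for each fixed $n$, the monomials appearing in $h_n[Y]$, $e_n[Y]$, and the convolutions on the right-hand sides involve only finitely many variables of bounded total degree, so all manipulations of $H(t)$ and $E(t)$ above are legitimate.
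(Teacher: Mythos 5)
Your proof is correct. Note, however, that the paper itself offers no proof of this proposition: it is explicitly presented as something the reader "may take as the definition" of plethystic substitution for the special cases needed, with the precise general definition deferred to Part 1 and to standard references (Loehr--Remmel, Macdonald). So your argument is not paralleling a proof in the paper but supplying the standard one from the literature: you start from the power-sum characterization \( p_k[\sum_i c_i M_i] = \sum_i c_i M_i^k \), use that \( h_n \) and \( e_n \) are determined by the exponential generating functions \( H(t)=\exp\bigl(\sum_{k\ge1} p_k t^k/k\bigr) \) and \( E(t)=\exp\bigl(\sum_{k\ge1} (-1)^{k-1} p_k t^k/k\bigr) \), and then the first two rows come from evaluating these at \( p_k\mapsto \pm\sum_i y_i^k \), while the additivity \( p_k[Y+Z]=p_k[Y]+p_k[Z] \) turns the exponentials into products and yields the convolution identities. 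This buys an actual derivation from the general definition (and makes transparent why the "plethystic minus" behaves as stated), at the cost of invoking the power-sum machinery that the paper deliberately sidesteps. Two minor remarks: in the last row only the third-row identity applied to \( Y+(-Z) \) is needed (the second-row identity is not actually used, since the right-hand side is already expressed via \( h_b[-Z] \) and \( e_b[-Z] \)); and your closing paragraph correctly disposes of the \( r=\infty \) or \( s=\infty \) case, since each coefficient of \( t^n \) involves only terms of bounded degree, so the formal manipulations are legitimate.
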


We recall the notation \( \ominus \) introduced in the first part
\cite{HJKSS2024}:
\[
  h_n[Y\ominus Z] = \sum_{a-b=n} h_a[Y] h_b[Z], \qquad
  e_n[Y\ominus Z] = \sum_{a-b=n} e_a[Y] e_b[Z].
\]
For integers \( r,s \), and \( n \), let
\[
  X=x_1+x_2+\cdots, \qquad
  X_n=x_1+x_2+\dots+x_n, \qquad
  X_{[r,s]}=x_r+x_{r+1}+\dots+x_s,
\]
\[
  A_n = \alpha_1+\alpha_2+\dots+\alpha_n,\qquad
  B_n = \beta_1+\beta_2+\dots+\beta_n,
\]
where \( X_n=A_n=B_n=0 \) if \( n\le 0 \) and \( X_{[r,s]}=0 \) if \( r>s \).

The next four basic lemmas will be used later.

\begin{lem}[{\cite[Lemma~5.2]{Kim_JT22}}] \label{lem:h_m[Z]}
  Suppose that \( Z \) is any formal power series, \( z \) is a single
  variable, and \( c\in\{0,1\} \). Then we have
  \[
    h_m[Z] = h_m[Z-cz] + czh_{m-1}[Z].
  \]
\end{lem}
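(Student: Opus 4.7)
The plan is to split into the two cases given by the hypothesis on $c$. When $c=0$, the identity reduces to the tautology $h_m[Z]=h_m[Z]$, so nothing needs to be proved. All of the content lies in the case $c=1$, where the claim is $h_m[Z]=h_m[Z-z]+z\,h_{m-1}[Z]$.

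To establish the case $c=1$, I would apply the plethystic addition/subtraction formula stated in the preceding proposition, namely $h_m[Z-z]=\sum_{a+b=m}h_a[Z]\,h_b[-z]$. The key observation is that $z$ is a \emph{single} variable, so $e_0[z]=1$, $e_1[z]=z$, and $e_b[z]=0$ for all $b\ge 2$. Combining this with the identity $h_b[-z]=(-1)^b e_b[z]$, only the terms $b=0$ and $b=1$ contribute to the sum, giving $h_m[Z-z]=h_m[Z]\cdot 1+h_{m-1}[Z]\cdot(-z)$. Rearranging yields exactly the desired formula.

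Since the entire argument is a direct unwinding of definitions, there is no real obstacle to the proof; the only subtle point is ensuring that one is in the setting where $z$ is literally a single variable, so that the expansion of $h_b[-z]$ truncates after two terms. If $z$ were replaced by a sum of variables the identity would fail in the stated form, which is presumably why the hypothesis is stated this way. The overall proof can be written in just a couple of lines using the previous proposition as a black box.
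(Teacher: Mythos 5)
Your argument is correct: for $c=1$ the additivity of plethysm gives $h_m[Z-z]=\sum_{a+b=m}h_a[Z]h_b[-z]=h_m[Z]-zh_{m-1}[Z]$ since $e_b$ of a single variable vanishes for $b\ge 2$, and the $c=0$ case is trivial. The paper itself does not prove this lemma but merely cites it from Kim's paper, and your two-line derivation is exactly the standard argument that citation refers to, including the correct identification of the hypothesis that $z$ be a single variable as the point where the truncation of $h_b[-z]$ is used.
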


\begin{lem}[{\cite[Lemma~4.7]{Kim_JT22}}]  \label{lem:e_m[Z]}
  Suppose that \( Z \) is any formal power series, \( z \) is a single
  variable, and \( c\in\{0,1\} \). Then we have
  \[
    e_m[Z] = e_m[Z-cz] + cze_{m-1}[Z-cz] \qand e_m[Z] = e_m[Z+cz]-cze_{m-1}[Z].
  \]
\end{lem}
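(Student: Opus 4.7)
The plan is to treat the two cases of $c$ separately; when $c=0$ both identities reduce to the trivial $e_m[Z]=e_m[Z]$, so I only need to handle $c=1$. Since $z$ is a single variable, the plethystic elementary symmetric polynomials of $z$ collapse to $e_0[z]=1$, $e_1[z]=z$, and $e_b[z]=0$ for $b\ge 2$. This degeneracy is what makes both formulas finite sums.

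For the first identity, I would write $Z=(Z-z)+z$ and apply the plethystic addition formula $e_m[Y+W]=\sum_{a+b=m}e_a[Y]\,e_b[W]$ given in the proposition just above, with $Y=Z-z$ and $W=z$. Only the terms $b=0$ and $b=1$ survive, giving
\[
e_m[Z]=e_m[(Z-z)+z]=e_m[Z-z]\cdot 1+e_{m-1}[Z-z]\cdot z,
\]
which is exactly the claim.

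For the second identity, I would instead expand $e_m[Z+z]=\sum_{a+b=m}e_a[Z]\,e_b[z]=e_m[Z]+z\,e_{m-1}[Z]$ by the same formula, and then rearrange to isolate $e_m[Z]$. This yields $e_m[Z]=e_m[Z+z]-z\,e_{m-1}[Z]$.

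There is no real obstacle: both identities are immediate consequences of the plethystic addition formula together with the fact that $e_b$ of a single variable vanishes for $b\ge 2$. The only thing worth a second look is the asymmetry between the two formulas — the first has $e_{m-1}[Z-cz]$ while the second has $e_{m-1}[Z]$ — but this simply reflects which side of the identity $z$ has been extracted from, and it is forced by the computation above. An analogous argument, using $h_b[z]=z^b$ rather than the vanishing of $e_b[z]$, yields the companion Lemma~\ref{lem:h_m[Z]}.
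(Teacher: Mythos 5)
Your argument is correct and is exactly the standard one-line derivation from the plethystic addition formula together with $e_b[z]=0$ for $b\ge 2$; the paper itself gives no proof of this lemma, simply citing \cite[Lemma~4.7]{Kim_JT22}, where the same computation appears. Nothing further is needed.
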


\begin{lem}[{\cite[Lemma~2.5]{HJKSS2024}}]\label{lem:det(h)=0}
  Let \( \lambda,\mu\in\Par_n \) with \( \mu\not\subseteq\lambda \)
  and let \( Z_{i,j} \) be any formal power series for \( 1\le i,j\le n \).
  Then
  \[
    \det \left( h_{\lambda_i-\mu_j-i+j}[Z_{i,j}] \right)_{i,j=1}^{n}
    =\det \left( e_{\lambda_i-\mu_j-i+j}[Z_{i,j}] \right)_{i,j=1}^{n} = 0. 
  \]
\end{lem}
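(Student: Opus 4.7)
The plan is to identify an index where $\mu$ exceeds $\lambda$ and exploit it to produce a large zero block in the matrix, then invoke a simple pigeonhole argument to conclude that the determinant vanishes.

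Since $\mu\not\subseteq\lambda$, there exists a smallest index $k\in[n]$ with $\lambda_k<\mu_k$. Using that $\lambda$ and $\mu$ are weakly decreasing, I would first observe that for every pair $(i,j)$ with $i\ge k$ and $j\le k$, we have $\lambda_i\le \lambda_k$, $\mu_j\ge \mu_k$, and $-i+j\le -k+k=0$, hence
\[
\lambda_i-\mu_j-i+j \;\le\; \lambda_k-\mu_k \;\le\; -1.
\]
Since $h_m[Z_{i,j}]=e_m[Z_{i,j}]=0$ whenever $m<0$, every entry of the submatrix with rows $i\in\{k,k+1,\dots,n\}$ and columns $j\in\{1,2,\dots,k\}$ is zero. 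This zero block has $n-k+1$ rows and $k$ columns.

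The second step is to apply the following elementary fact: if an $n\times n$ matrix $M$ has an $r\times s$ zero block with $r+s>n$, then $\det M=0$. This follows directly from the Leibniz expansion: expanding
\[
\det M = \sum_{\sigma\in\Sym_n} \sgn(\sigma)\prod_{j=1}^n M_{\sigma(j),j},
\]
the $k$ columns in $\{1,\dots,k\}$ must be sent by $\sigma$ into the $n$ rows, but only the $k-1$ rows in $\{1,\dots,k-1\}$ avoid the zero block; by pigeonhole some column $j\le k$ is sent to a row $\sigma(j)\ge k$, so $M_{\sigma(j),j}=0$ and the whole product vanishes. In our setting $r+s=(n-k+1)+k=n+1>n$, so this applies to both matrices simultaneously, giving the two vanishing statements.

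I do not expect any real obstacle here: once the location of the zero block is correctly identified from the first violation $\lambda_k<\mu_k$, the rest is a single line of linear algebra. The only subtlety to be careful about is checking all three inequalities $\lambda_i\le\lambda_k$, $\mu_j\ge\mu_k$, and $j-i\le 0$ simultaneously on the range $i\ge k$, $j\le k$, which uses both monotonicities of partitions and the choice of block corner. The argument is identical for the $h$-determinant and the $e$-determinant, since only the common vanishing convention $h_m=e_m=0$ for $m<0$ is used.
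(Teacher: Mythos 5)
Your argument is correct: locating the first index $k$ with $\lambda_k<\mu_k$, checking that the block with rows $i\ge k$ and columns $j\le k$ vanishes because the subscripts $\lambda_i-\mu_j-i+j$ are negative, and concluding by the standard $r+s>n$ zero-block fact is exactly the intended reasoning (the present paper only cites Lemma~2.5 of Part~1 without reproducing the proof, but it uses this same block-of-zeros argument verbatim elsewhere, e.g.\ in the proof of Lemma~\ref{lem:rec4}). No gaps.
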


\begin{lem}[{\cite[Lemma~2.6]{HJKSS2024}}]\label{lem:det(h)=det*det}
  Let \( \lambda,\mu\in\Par_n \) with \( \mu\subseteq\lambda \)
  and let \( Z_{i,j} \) be any formal power series for \( 1\le i,j\le n \).
  If \( 1\le k\le n \) is an integer such that \( \lambda_k=\mu_k \), then
  \[
    \det \left( h_{\lambda_i-\mu_j-i+j}[Z_{i,j}] \right)_{i,j=1}^{n}
    =\det \left( h_{\lambda_i-\mu_j-i+j}[Z_{i,j}] \right)_{i,j=1}^{k-1}
    \det \left( h_{\lambda_i-\mu_j-i+j}[Z_{i,j}] \right)_{i,j=k+1}^{n}. 
  \]
\end{lem}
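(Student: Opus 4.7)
The strategy is to use the hypothesis \( \lambda_k = \mu_k \) together with the monotonicity of partitions to place a block of zeros in the matrix \( M = (h_{\lambda_i - \mu_j - i + j}[Z_{i,j}])_{i,j=1}^n \), then peel off the \( k \)-th row and column using the fact that the \( (k,k) \)-entry equals \( 1 \). No new identity about \( h_m \) is needed beyond the conventions \( h_0 = 1 \) and \( h_m = 0 \) for \( m < 0 \).

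First, I would record the key sign estimates. Since \( \lambda \) is weakly decreasing, \( \lambda_i \le \lambda_k \) for all \( i \ge k \); since \( \mu \) is weakly decreasing, \( \mu_j \ge \mu_k \) for all \( j \le k \). Combined with \( \lambda_k = \mu_k \), these give \( \lambda_i - \mu_j \le 0 \) whenever \( i \ge k \) and \( j \le k \). I then read off three consequences: (i) at \( (i,j) = (k,k) \) the subscript is \( 0 \), so \( M_{k,k} = h_0[Z_{k,k}] = 1 \); (ii) for \( i > k \) and \( j \le k \) the subscript satisfies \( \lambda_i - \mu_j - i + j \le -1 \), so \( M_{i,j} = 0 \); (iii) for \( i = k \) and \( j < k \) the same bound holds (since \( -k + j \le -1 \)), so \( M_{k,j} = 0 \).

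Second, I would reorganize \( M \) as a block matrix
\[
M = \begin{pmatrix} A & B \\ 0 & D \end{pmatrix},
\]
where \( A \) is the upper-left \( (k-1) \times (k-1) \) block \( (M_{i,j})_{1 \le i,j \le k-1} \), \( D \) is the lower-right \( (n-k+1) \times (n-k+1) \) block \( (M_{i,j})_{k \le i,j \le n} \), and the lower-left \( (n-k+1) \times (k-1) \) block vanishes by (ii) and (iii) (its first row uses (iii), its remaining rows use (ii)). Block-triangularity gives \( \det M = \det A \cdot \det D \). To finish, I would expand \( \det D \) along its first column: by (i) its top entry is \( 1 \) and by (ii) every entry below it vanishes, so cofactor expansion yields \( \det D = \det(M_{i,j})_{k+1 \le i,j \le n} \), which together with \( \det A \) recovers the two factors in the statement.

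The argument is essentially bookkeeping, with no real analytic content; the only point to check carefully is that every \( (i,j) \) with \( i \ge k \) and \( j \le k \) other than \( (k,k) \) satisfies \( \lambda_i - \mu_j - i + j < 0 \), which reduces to combining \( \lambda_i - \mu_j \le 0 \) with a strict inequality on \( -i + j \). The degenerate cases \( k = 1 \) and \( k = n \) are handled by the convention that an empty determinant equals \( 1 \).
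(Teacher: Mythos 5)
Your argument is correct: the sign estimates in (i)--(iii) are exactly right (note you only use that \( \lambda \) and \( \mu \) are weakly decreasing with \( \lambda_k=\mu_k \), together with \( h_0[Z]=1 \) and \( h_m[Z]=0 \) for \( m<0 \)), and the block-triangular factorization followed by cofactor expansion along the column through the unit entry \( (k,k) \) gives the stated product. This paper only cites the lemma from Part~1 without reproducing a proof, but your route is the natural one and matches the block-triangularity arguments the paper uses elsewhere (e.g.\ in Lemma~\ref{lem:rec1}), so there is nothing substantive to flag.
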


We finish this section with the following definition, which will be
used frequently in this paper.

\begin{defn}
  A \emph{reverse partition} is a weakly increasing sequence of positive integers
\( \vs=(s_1,\dots,s_n) \). Denote by \( \RPar_n \) the set of reverse partitions
with \( n \) elements.
For convenience in our arguments, we define \( s_0=0 \) for \(
\vs=(s_1,\dots,s_n)\in\RPar_n \).
For \( \vr=(r_1,\dots,r_n),\vs=(s_1,\dots,s_n)\in\PP^n \),
we write \( \vr\le \vs \) if \( r_i\le s_i \) for all \( 1\le i\le n \).
\end{defn}

\section{Flagged Grothendieck polynomials}
\label{sec:flagged G}

In this section we give a combinatorial model for the refined canonical
stable Grothendieck polynomials \( G_\lambda(\vx;\va,\vb) \) using marked
multiset-valued tableaux. To this end we introduce a flagged version of \(
G_\lambda(\vx;\va,\vb) \) using marked multiset-valued tableaux and prove a
Jacobi--Trudi-like formula for this flagged version, which reduces to the
Jacobi--Trudi-like formula for \( G_\lambda(\vx;\va,\vb) \) in
Theorem~\ref{thm:JT_ab_intro}. More generally, we consider two flagged versions
of \( G_\lambda(\vx;\va,\vb) \) and extend the partition \( \lambda \) to a skew
shape. In the last part of this section we show that one of our results implies
a result of Matsumura~\cite{Matsumura_2018} on a Jacobi--Trudi-like formula
for a flagged Grothendieck polynomial.

To begin with, we introduce the following definition:

\begin{defn}
A \emph{marked multiset-valued tableau} of shape $\lm$ is a filling $T$ of $\lm$
with multisets such that
\begin{itemize}
\item $T(i,j)$ is a nonempty (finite) multiset $\{a_1\le \dots\le a_k\}$ of positive integers, in
  which each integer $a_i$ may be marked if $i\ge2$ and $a_{i-1}<a_i$, and
\item $\max(T(i,j)) \le \min(T(i,j+1))$ and $\max(T(i,j)) < \min(T(i+1,j))$
  if \( (i,j), (i,j+1)\in \lm \) and \( (i,j), (i+1,j)\in \lm \), respectively.
\end{itemize}
Let $\MMSVT(\lm)$ denote the set of marked multiset-valued tableaux of shape
$\lm$.

For $T\in\MMSVT(\lm)$, let \( \vx^{T(i,j)}=x_1^{m_1}x_2^{m_2}\cdots\), where \(
m_k \) is the total number of appearances of (unmarked integers) \( k \) and
(marked integers) \( k^* \) in \( T(i,j) \), and let \( \unmarked(T(i,j)) \)
(resp.~\( \marked(T(i,j)) \)) denote the number of unmarked (resp.~marked)
integers in \( T(i,j) \). We define
\[
  \wt(T) =  \prod_{(i,j)\in\lm} \vx^{T(i,j)} \alpha_j^{\unmarked(T(i,j))-1}
  (-\beta_i)^{\marked(T(i,j))}.
\]
For example, see Figure~\ref{fig:mmsvt}.
\end{defn}

\begin{figure}
\ytableausetup{boxsize=3.3em}
\begin{ytableau}
  \none & 1,2^*,2 & 2,2,4^* \\
  1 & 3,3
\end{ytableau}
\caption{An example of \( T\in\MMSVT((3,2)/(1)) \),
  where the marked integers are indicated with \( * \).  The weight of \( T \) is given by 
  \( \wt(T)=x_1^2x_2^4x_3^2x_4\alpha_2^2\alpha_3(-\beta_1)^2 \).}
\label{fig:mmsvt}
\end{figure}

The main goal of this section is to prove the following combinatorial
interpretation for \( G_\lambda(\vx;\va,\vb) \).

\begin{thm}\label{thm:main_G_sec6}
  For a partition \( \lambda \), we have
\[
  G_\lambda(\vx;\va,\vb) = \sum_{T\in \MMSVT(\lambda)} \wt(T).
\]
\end{thm}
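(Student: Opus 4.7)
The plan is to derive Theorem~\ref{thm:main_G_sec6} from the flagged Jacobi--Trudi-like formula (\ref{eq:8}) announced in Theorem~\ref{thm:main_G_intro}. First I would define the flagged generating function
\[
  G_\lm^{\row(\vr, \vs)}(\vx;\va,\vb) := \sum_{T} \wt(T),
\]
where $T$ ranges over those marked multiset-valued tableaux of shape $\lm$ whose entries in row $i$ all lie in $\{r_i, r_i+1, \dots, s_i\}$. Taking $\mu = \emptyset$, $r_i = 1$, and letting $s_i \to \infty$ recovers $\sum_{T \in \MMSVT(\lambda)} \wt(T)$, while on the right-hand side of (\ref{eq:8}) the same specialization reproduces (\ref{eq:4}), which equals $G_\lambda(\vx;\va,\vb)$ by Theorem~\ref{thm:JT_ab_intro}. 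So the whole task reduces to proving (\ref{eq:8}).

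For (\ref{eq:8}) I would follow the Wachs-style inductive strategy advertised in the introduction. On the combinatorial side, one picks an extreme cell --- for instance the last cell in the topmost non-empty row of $\lm$ --- and splits $\sum \wt(T)$ according to the content of that cell: how many times the maximum admissible letter $s_{i_0}$ appears there and whether the top letter is marked. Each split either shrinks the flag interval $[r_{i_0}, s_{i_0}]$ or removes a cell from $\lm$, giving natural recursions in $\vs$, $\vr$, $\lambda$, and $\mu$. The plethystic identities in Lemmas~\ref{lem:h_m[Z]} and \ref{lem:e_m[Z]} are precisely what makes the determinantal side satisfy matching recursions: they describe the effect on $h_m[Z]$ of removing a single variable from $Z$, which is exactly what changing $s_{i_0}$ by one does to the entries of row $i_0$ in the determinant of (\ref{eq:8}). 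Combined with elementary row operations and with Lemmas~\ref{lem:det(h)=0} and \ref{lem:det(h)=det*det} to dispose of shape boundary cases, this should match the two sides step by step. The induction terminates at one-row (or empty) shapes, where both sides collapse to an explicit product readily computed from the definition of $\MMSVT$ on a single row.

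The main obstacle I expect is bookkeeping rather than any one deep step. One must keep the mixed monotonicity hypothesis --- that $r_i \le r_{i+1}$ and $s_i \le s_{i+1}$ whenever $\mu_i < \lambda_{i+1}$ --- intact along the induction, and verify that every recursion step either preserves that hypothesis or lands in a boundary case already handled by Lemma~\ref{lem:det(h)=0} or Lemma~\ref{lem:det(h)=det*det}. Choosing both the pivot cell and the induction parameter carefully (lexicographically on, say, $(|\lm|,\, s_{i_0} - r_{i_0})$, together with the number of active rows) is essential, since a poor choice of pivot destroys the monotonicity and stalls the argument. Once this navigation is in place, matching the tableau recurrences against the determinantal recurrences is elementary linear algebra powered by Lemmas~\ref{lem:h_m[Z]}--\ref{lem:det(h)=det*det}, and Theorem~\ref{thm:main_G_sec6} follows by the stated specialization.
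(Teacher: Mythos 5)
Your overall strategy is exactly the paper's: define the row-flagged generating function over $\MMSVT^{\row(\vr,\vs)}(\lm)$, prove the flagged Jacobi--Trudi formula \eqref{eq:8} by showing that both sides satisfy the same Wachs-style recursions and initial conditions (using Lemma~\ref{lem:h_m[Z]} and row operations on the determinant), and then specialize to $\mu=\emptyset$, $\vr=(1^n)$, $\vs=(n^n)$ to recover \eqref{eq:4} and hence Theorem~\ref{thm:main_G_sec6} as $n\to\infty$. That is precisely Corollary~\ref{cor:G=MMSVT} plus Section~\ref{sec:proof-theor-refthm:m-1}.

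The one place where your plan, as literally stated, would stall is the choice of pivot. Taking ``the last cell in the topmost non-empty row'' does not work in general: if $\lambda_1=\lambda_2$ and $s_1<s_2$, you can neither delete the cell $(1,\lambda_1)$ (the result is not a partition) nor argue that all entries of row $1$ are strictly below $s_1$ (the cell beneath it is only bounded by $s_2>s_1$), so neither branch of the recursion applies. The pivot must be dictated by the flag, not the shape: after splitting off block-diagonal pieces whenever $\mu_t\ge\lambda_{t+1}$ (so that $\vr,\vs$ become weakly increasing), one takes $k$ to be the unique index with $s_{k-1}<s_k=\dots=s_n$. This guarantees that $\vs-\epsilon_k$ stays weakly increasing, and that either $\lambda_k>\lambda_{k+1}$ (so the cell $(k,\lambda_k)$ may be deleted, Lemma~\ref{lem:rec2}) or $s_k=s_{k+1}$ with $\lambda_k=\lambda_{k+1}$ (so every entry of row $k$ is forced below $s_k$, Lemma~\ref{lem:rec3}). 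Relatedly, your proposed induction parameter $(|\lm|,\,s_{i_0}-r_{i_0})$ is not well defined across steps since the pivot row changes; the paper uses $n+|\lm|+\sum_i s_i$, which decreases under every recursion. Finally, the induction does not bottom out at one-row shapes: the base cases are $\mu=\lambda$ (value $1$) and $s_k<r_k$ with $\mu_k<\lambda_k$ (value $0$). With the pivot and induction parameter corrected in this way, the rest of your outline matches the paper's argument.
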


\begin{remark}
  One can easily check that the marked multiset-valued tableaux are essentially
  the same as the hook-valued tableaux introduced in \cite{Yeliussizov2017}. If \(
  \va=(\alpha,\alpha,\dots)\) and \( \vb=(-\beta,-\beta,\dots) \),
  Theorem~\ref{thm:main_G_sec6} reduces to Yeliussizov's result in \cite[Theorem
  4.2]{Yeliussizov2017}.
\end{remark}

\begin{remark}
  We note that the crystal operators introduced by Hawkes and Scrimshaw
  \cite[Theorem 4.6]{HS20} on MMSVT (when translated from hook-valued tableaux
  under the natural bijection) preserve the statistics that determine \( \va \)
  and \( \vb \), which gives an alternative proof of the Schur positivity of \(
  G_\lambda(\vx;\va,-\vb) \) (private communication with Travis Scrimshaw).
\end{remark}

Our strategy for proving Theorem~\ref{thm:main_G_sec6} is to introduce a flagged version which
allows us to use induction. We note that a row-flagged version with partition
shape is sufficient to prove this theorem, but for completeness we also consider a
column-flagged version and we generalize the partition shape to any skew shape.

\begin{defn}\label{defn:Grow}
  Let \( \vr=(r_1,\dots,r_n)\in\NN^n \) and \( \vs=(s_1,\dots,s_n)\in\NN^n \).
  Let \(\MMSVT^{\row(\vr, \vs)}(\lm)\) denote the set of $T\in \MMSVT(\lm)$ such
  that $r_i \le \min(T(i,j))$ and $\max(T(i,j))\le s_i$ for all $(i,j)\in \lm$.
  Similarly, $\MMSVT^{\col(\vr, \vs)}(\lm)$ denotes the set of $T\in \MMSVT(\lm)$
  such that $r_j \le \min(T(i,j))$ and $\max(T(i,j))\le s_j$ for all $(i,j)\in
  \lm$. We define the \emph{row-flagged} and \emph{column-flagged
    refined canonical stable Grothendieck polynomials} by
  \begin{align*}
    G_\lm^{\row(\vr, \vs)}(\vx;\va,\vb)
    &=\sum_{T\in\MMSVT^{\row(\vr, \vs)}(\lm)} \wt(T),\\
    G_\lm^{\col(\vr, \vs)}(\vx;\va,\vb)
    &=\sum_{T\in\MMSVT^{\col(\vr, \vs)}(\lm)} \wt(T).
  \end{align*}
\end{defn}

\begin{remark}\label{rem:s_k=1}
  Recall that if \( T\in \MMSVT(\lm) \), then every cell of \( T \)
  consists only of positive integers. Therefore, in
  Definition~\ref{defn:Grow}, it suffices to consider
  \( \vr,\vs\in\PP^n \). However, we extend the definition to
  \( \vr,\vs\in\NN^n \) because later in this section, for example in
  Lemma~\ref{lem:rec2}, we will prove recurrences for
  \( G_\lm^{\row(\vr, \vs)}(\vx;\va,\vb) \) in which an element
  \( s_k \) of \( \vs \) is decreased by \( 1 \). 

  To illustrate, suppose \( s_k=0 \). If \( \mu_k<\lambda_k \), then
  there is a cell in row \( k \), say \( (i,k) \), in
  \( \lambda/\mu \). Thus, if
  \( T\in \MMSVT^{\row(\vr, \vs)}(\lm) \), then every integer
  in \( T(i,k) \) must be at most \( s_k=0 \). However, since
  \( T(i,k) \) must consist of positive integers, there is no such
  \( T \), hence
  \( \MMSVT^{\row(\vr, \vs)}(\lm) = \emptyset \). On the other
  hand, if \( \mu_k=\lambda_k \), then there are no cells in row
  \( k \) of \( \lambda/\mu \). Therefore the upper bound \( s_k=0 \)
  is irrelevant and we may have
  \( \MMSVT^{\row(\vr, \vs)}(\lm) \ne \emptyset \).
\end{remark}

Now we state Jacobi--Trudi-like formulas for \( G_\lm^{\row(\vr,
  \vs)}(\vx;\va,\vb) \) and \( G_\lm^{\col(\vr, \vs)}(\vx;\va,\vb) \) whose
proofs are given in Sections~\ref{sec:proof-theor-refthm:m-1} and
\ref{sec:proof-theor-refthm:m-2} respectively.
Here we use the convention that
if \( r>s \), then the product \( \prod_{i=r}^{s} a_i \) is
defined to be \( 1 \).

\begin{thm}
  \label{thm:main_G_row}
  Let \( \lambda,\mu\in\Par_n,\vr,\vs\in\PP^n \) with \( \mu\subseteq\lambda \).
  If $r_i\le r_{i+1}$ and $s_i\le s_{i+1}$ whenever
  $\mu_i<\lambda_{i+1}$ for \( 1\le i\le n-1 \), then
  \begin{align}
    \label{eq:G_row}
    G_\lm^{\row(\vr, \vs)}(\vx;\va,\vb) 
    &= C \det\left(   h_{\lambda_i-\mu_j-i+j}
      [X_{[r_j,s_i]}\ominus(A_{\lambda_i}-A_{\mu_j}-B_{i-1} + B_{j})]\right)_{i,j=1}^n,
  \end{align}
where
\[
C = \prod_{i=1}^n \prod_{l=r_i}^{s_i} (1-\beta_i x_l).
\]
\end{thm}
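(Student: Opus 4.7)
The plan is to prove \eqref{eq:G_row} by induction, following the strategy indicated in the introduction: establish a common recurrence satisfied by both sides together with matching base cases. The compatibility hypothesis ``$r_i\le r_{i+1}$ and $s_i\le s_{i+1}$ whenever $\mu_i<\lambda_{i+1}$'' is exactly the condition needed so that the natural recurrences obtained by modifying a single entry of $\vr$ or $\vs$ stay within the class of flags to which the theorem applies.

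For the base cases, the most important are (i) the degenerate case $n=0$, where both sides trivially equal $1$; (ii) the case $r_k>s_k$ with $\lambda_k>\mu_k$ for some $k$, where the LHS vanishes by Remark~\ref{rem:s_k=1} and the $k$-th row of the determinant should become degenerate; and (iii) the case $\lambda_k=\mu_k$ for some intermediate $k$, where Lemma~\ref{lem:det(h)=det*det} factors the determinant into blocks over rows $<k$ and rows $>k$, matching a splitting on the combinatorial side in which the empty row $k$ decouples the upper and lower parts of any $T\in\MMSVT^{\row(\vr,\vs)}(\lm)$ (since the column-strict constraint requires both cells to lie in $\lm$).

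For the inductive step, I would choose an index $k$ with $\lambda_k>\mu_k$ and decrease $s_k$ by one, with $k$ chosen so that the new flag still satisfies the compatibility hypothesis. On the combinatorial side, I partition $\MMSVT^{\row(\vr,\vs)}(\lm)$ into those tableaux in which no cell of row $k$ contains the value $s_k$ (unmarked or marked), contributing $G_\lm^{\row(\vr,\vs')}(\vx;\va,\vb)$ with $\vs'$ obtained from $\vs$ by replacing $s_k$ with $s_k-1$, and those tableaux in which $s_k$ or $s_k^*$ does appear in row $k$. For the second family I expect a bijective description that peels off the rightmost occurrence of $s_k$ (or $s_k^*$) from the cell $(k,\lambda_k)$, producing a contribution cleanly factored by $x_{s_k}$ together with the appropriate $\alpha_{\lambda_k}$ or $-\beta_k$ coming from the weight. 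On the determinantal side, Lemma~\ref{lem:h_m[Z]} applied with $z=x_{s_k}$ to every entry in the $k$-th row splits it into an ``$x_{s_k}$ removed'' part and a correction of the form $x_{s_k}h_{m-1}[\,\cdot\,]$; the prefactor $C$ simultaneously contributes an isolated $(1-\beta_k x_{s_k})$ factor. Row operations, combined with the explicit form of the plethystic argument $A_{\lambda_k}-A_{\mu_j}-B_{k-1}+B_j$, should then identify these determinantal pieces with the two combinatorial contributions.

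The main obstacle I anticipate is the bookkeeping required to match the parameter-decorated weights. The weight in $\MMSVT$ assigns $\alpha_j^{\unmarked(T(i,j))-1}$ (indexed by the column $j$) and $(-\beta_i)^{\marked(T(i,j))}$ (indexed by the row $i$), while the $(i,j)$-entry of the Jacobi--Trudi matrix mixes $A$'s indexed by $\lambda_i,\mu_j$ with $B$'s indexed by $i-1,j$. Reconciling these requires the recurrence steps to peel off the variable $x_{s_k}$ together with the associated $\alpha$'s and $\beta_k$ simultaneously, in a manner compatible with both the determinantal structure and the combinatorial weighting. This is where most of the technical effort will be concentrated, and where auxiliary identities beyond Lemmas~\ref{lem:h_m[Z]}--\ref{lem:det(h)=det*det} will likely be needed; the asymmetry between the column-indexed $\alpha$'s and the row-indexed $\beta$'s is the subtlest point, and probably forces an intermediate ``mixed'' recurrence in which the decrement on the $s_k$ side is matched by a corresponding modification of the plethystic arguments in neighboring rows.
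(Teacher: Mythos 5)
Your overall architecture --- treat the determinant as a separate object, show both sides satisfy the same recurrences and initial conditions in the style of Wachs, and induct --- is exactly the paper's strategy, and your base cases (i) and (ii) correspond to Lemmas~\ref{lem:G=1} and \ref{lem:rec4}. But there are three concrete gaps. First, your splitting case (iii) is stated for $\lambda_k=\mu_k$, whereas the splitting that is actually needed is at any $t$ with $\mu_t\ge\lambda_{t+1}$ (Lemma~\ref{lem:rec1}): the monotonicity hypothesis on $\vr,\vs$ is only imposed where $\mu_i<\lambda_{i+1}$, so when some $\mu_t\ge\lambda_{t+1}$ the flags need not be weakly increasing and your choice of an index $k$ at which to decrement $s_k$ breaks down. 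One must first factor both sides into blocks at every such $t$, and the vanishing of the lower-left block of the determinant is not Lemma~\ref{lem:det(h)=det*det} but a count of plethystic variables in $e_\ell(\alpha_{\lambda_i+1},\dots,\alpha_{\mu_j},\beta_{j+1},\dots,\beta_{i-1})$, since $h_m[X\ominus Z]$ does not vanish for $m<0$. Second, your inductive step only treats the case where row $k$ ends at a removable corner. After reducing to $\vr,\vs\in\RPar_n$ the natural choice is the unique $k$ with $s_{k-1}<s_k=\dots=s_n$, and if $\lambda_k=\lambda_{k+1}$ the cell $(k,\lambda_k)$ cannot be deleted; a separate recurrence (Lemma~\ref{lem:rec3}) showing $G_{\lm}^{\row(\vr,\vs)}=G_{\lm}^{\row(\vr,\vs-\epsilon_k)}$ is required, using that column-strictness between rows $k$ and $k+1$ already forces every entry of row $k$ below $s_k$.

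Third, the decomposition you propose (split by whether $s_k$ occurs in row $k$ at all, then peel one occurrence at a time) makes the weight bookkeeping you worry about genuinely painful, because $\alpha_{\lambda_k}^{\unmarked(T(k,\lambda_k))-1}$ depends on whether the peeled entry was the first unmarked one in its cell. The paper instead classifies $T$ by whether $T(k,\lambda_k)$ contains an entry strictly less than $s_k$ or consists solely of $s_k$'s, and extracts \emph{all} copies of $s_k$ from that single cell at once (no other cell of row $k$ can contain $s_k$ in the first case); this yields the closed-form coefficients $\frac{1-x_{s_k}\beta_k}{1-x_{s_k}\alpha_{\lambda_k}}$ and $\frac{x_{s_k}}{1-x_{s_k}\alpha_{\lambda_k}}$ in Lemma~\ref{lem:rec2}. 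On the determinant side the matching identity for the $k$th row requires applying Lemma~\ref{lem:h_m[Z]} \emph{twice}: once with $z=x_{s_k}$ to the $X_{[r_j,s_i]}$ part and once with $z=\alpha_{\lambda_k}$ to the $A_{\lambda_i}$ part of the plethystic argument. Your sketch mentions only the first application, so the factor $1-x_{s_k}\alpha_{\lambda_k}$ would never appear; this is precisely the ``mixed recurrence'' you anticipated but did not supply. One also has to check separately the columns $j$ with $r_j>s_k$, where all three entries vanish by a degree count.
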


\begin{thm}
  \label{thm:main_G_col}
  Let \( \lambda,\mu\in\Par_n,\vr,\vs\in\PP^n \) with \( \mu\subseteq\lambda \).
If \( r_i-\mu_i\le r_{i+1}-\mu_{i+1} \) and \( s_i-\lambda_i\le s_{i+1}-\lambda_{i+1}+1 \) 
whenever \( \mu_i<\lambda_{i+1} \) for \( 1\le i\le n-1 \), then  
  \begin{align} \label{eq:G_col}
    G_\lmc^{\col(\vr, \vs)}(\vx;\va,\vb) 
    = D \det\left(   e_{\lambda_i-\mu_j-i+j}
      [X_{[r_j,s_i]}\ominus(A_{i-1}-A_{j}-B_{\lambda_i} + B_{\mu_j})]\right)_{i,j=1}^n,
  \end{align}
where
\[
D = \prod_{i=1}^n \prod_{l=r_i}^{s_i} (1-\alpha_i x_l)^{-1}.
\]
\end{thm}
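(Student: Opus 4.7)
The plan is to mimic the strategy used for Theorem~\ref{thm:main_G_row}, with the roles of rows and columns swapped and with the elementary symmetric functions $e_m$ replacing $h_m$ throughout. I would induct on $|\lambda'/\mu'|$ with $n$ held fixed. The degenerate situations---when $\mu\not\subseteq\lambda$, when $\lambda_k=\mu_k$ for some interior $k$, or when some $r_k>s_k$---are handled directly: the combinatorial side $\MMSVT^{\col(\vr,\vs)}(\lambda'/\mu')$ either is empty or decouples into two smaller instances, and the corresponding determinant either vanishes by the $e$-statement of Lemma~\ref{lem:det(h)=0} or splits by an $e$-analogue of Lemma~\ref{lem:det(h)=det*det}, which follows by the same block-triangular argument.

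For the inductive step I would establish matching decrement recurrences on both sides. Combinatorially, I partition $\MMSVT^{\col(\vr,\vs)}(\lambda'/\mu')$ according to the number of copies of $s_k$ appearing in column $k$ of $\lambda'/\mu'$, and whether each such copy is marked: unmarked copies after the first contribute $\alpha_k x_{s_k}$ each and assemble into the geometric series factor $(1-\alpha_k x_{s_k})^{-1}$ already present in $D$, while each marked copy contributes $(-\beta_k)x_{s_k}$. The hypothesis $s_i-\lambda_i \le s_{i+1}-\lambda_{i+1}+1$ (together with the analogous inequality on $\vr$) is precisely what ensures that replacing $s_k$ by $s_k-1$ yields a smaller instance still satisfying the flag hypotheses, so the induction proceeds. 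On the determinantal side, $s_k$ enters the matrix in row $i=k$ via $X_{[r_j,s_i]}$; Lemma~\ref{lem:e_m[Z]} applied in this row peels $x_{s_k}$ out of the $e_m[\cdots]$ entries, and a sequence of row operations between rows $k$ and $k-1$ collapses the $(-\beta_k)$-corrections, leaving a determinant of the same form with $s_k$ replaced by $s_k-1$.

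The main obstacle will be the algebraic bookkeeping required to match the combinatorial weights with the residues produced by the row operations, particularly the interaction of the $\ominus(A_{i-1}-A_j-B_{\lambda_i}+B_{\mu_j})$ slot with the decrement: expanding it via the identities $e_m[Y\pm Z]=\sum_{a+b=m}e_a[Y]e_b[\pm Z]$ and tracking how each $\alpha_k$ and each $\beta_l$ enters or leaves is delicate. A secondary subtlety is explaining why the column hypothesis carries the extra ``$+1$'' that is absent in Theorem~\ref{thm:main_G_row}: this extra slack reflects the staircase correction $-i+j$ appearing in the subscript $\lambda_i-\mu_j-i+j$ of $e$, and the weaker bound is exactly what is needed to keep all $e$-subscripts nonnegative after the row operations. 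Once the two recurrences are shown to coincide, the induction closes and the theorem follows, which by specialization to $\mu=\emptyset$ and the trivial flag also yields \eqref{eq:9}.
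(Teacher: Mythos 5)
Your overall strategy --- match recurrences and initial conditions for the tableau sum and the determinant, using the $e$-analogues of the lemmas --- is exactly the paper's, but as written the induction does not close. You propose to induct on $|\lambda'/\mu'|$ with $n$ fixed, yet one of the two recurrences you need (the case $\lambda_k=\lambda_{k+1}$, the analogue of Lemma~\ref{lem:rec3c}) leaves the shape unchanged and only decrements $s_k$; and in the other recurrence the ``geometric series'' term also keeps the shape and only lowers $s_k$. So the quantity $|\lambda'/\mu'|$ does not decrease and your induction never terminates. The paper inducts on $n+|\lm|+\sum_i s_i$. Relatedly, you never say \emph{which} column $k$ you peel. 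In the row case one takes the unique $k$ with $s_{k-1}<s_k=\dots=s_n$, but here the recurrence lemmas require $s_{k-1}-\lambda_{k-1}\le s_k-\lambda_k$ (and, when $\lambda_k=\lambda_{k+1}$, that $s_k=s_{k+1}+1$); the paper takes $k$ to be the largest index $m$ with $s_{m-1}-\lambda_{m-1}\le s_m-\lambda_m$ and then uses the hypothesis $s_i-\lambda_i\le s_{i+1}-\lambda_{i+1}+1$ to force $s_k-\lambda_k=s_{k+1}-\lambda_{k+1}+1$ beyond $k$. Without specifying this pivot you cannot verify that the smaller instances still satisfy the flag hypotheses, which is the crux of the inductive step.

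Two further points. First, your diagnosis of the ``$+1$'' is off: it is not about keeping $e$-subscripts nonnegative after row operations, but is combinatorial --- entries strictly increase down a column, so column $k$ of $\lambda'/\mu'$ (which has $\lambda_k-\mu_k$ cells) needs $\lambda_k-\mu_k$ distinct values in $[r_k,s_k]$, i.e.\ $r_k-\mu_k\le s_k-\lambda_k+1$; this is why all flag conditions are phrased in terms of $r_i-\mu_i$ and $s_i-\lambda_i$ and why the degenerate (vanishing) case is $s_k-\lambda_k+1<r_k-\mu_k$ rather than $s_k<r_k$. Second, your weight bookkeeping has the indices swapped: extra copies of $s_k$ in the bottom cell $(\lambda_k,k)$ of column $k$ contribute $\alpha_k x_{s_k}$ if unmarked but $-\beta_{\lambda_k}x_{s_k}$ if marked (the $\beta$-index is the \emph{row} index $\lambda_k$, not $k$), which is why the correct recurrence carries the factor $(1-x_{s_k}\beta_{\lambda_k})/(1-x_{s_k}\alpha_k)$. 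Moreover, because of column-strictness, deleting the cell $(\lambda_k,k)$ forces the cell above it to have entries at most $s_k-1$, so the second term of the recurrence involves $\vs-\epsilon_k$ as well --- another asymmetry with the row case that your sketch does not account for.
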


\begin{remark}
  The condition \( \mu\subseteq\lambda \) in Theorems~\ref{thm:main_G_row} and \ref{thm:main_G_col}
  is necessary. Indeed, if \( \lambda=(1),\mu=(2),\vr=(1), \vs=(1)\), 
  then the left hand side of \eqref{eq:G_row} (resp.~\eqref{eq:G_col}) is zero, while the right hand side is 
  \( \beta_1-\alpha_2 \) (resp.~\( \beta_2-\alpha_1 \)).
\end{remark} 

By specializing Theorem~\ref{thm:main_G_row} we obtain the following corollary,
which implies the combinatorial model for \( G_\lambda(\vx;\va,\vb) \) in
Theorem~\ref{thm:main_G_sec6} as \( n\to\infty \).

\begin{cor} \label{cor:G=MMSVT}
  For a partition \( \lambda\in\Par_n \),
  we have
  \[
    G_\lambda(\vx_n;\va,\vb) = \sum_{T\in \MMSVT(\lambda), \max(T)\le n} \wt(T),
  \]
  where \( \max(T) \) is the largest integer appearing in \( T \).
\end{cor}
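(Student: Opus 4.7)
The plan is to obtain this corollary as a direct specialization of Theorem~\ref{thm:main_G_row}, since the combinatorial side of Theorem~\ref{thm:main_G_row} already encodes the weighted count over row-flagged MMSVT. I will set $\mu = \emptyset$, $\vr = (1,1,\dots,1)$, and $\vs = (n,n,\dots,n)$, which are constant sequences of length $n$.

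First I would check the hypotheses of Theorem~\ref{thm:main_G_row}. Since $\vr$ and $\vs$ are constant, the monotonicity conditions $r_i \le r_{i+1}$ and $s_i \le s_{i+1}$ hold trivially for every index, so in particular they hold whenever $\mu_i < \lambda_{i+1}$. Also $\mu = \emptyset \subseteq \lambda$. Next I would unwind what the two sides of \eqref{eq:G_row} become under this specialization. On the combinatorial side, $\MMSVT^{\row(\vr,\vs)}(\lambda)$ with $\vr = (1,\dots,1)$ and $\vs = (n,\dots,n)$ is exactly the set of $T \in \MMSVT(\lambda)$ with every entry in $[n]$, which, since entries are positive integers, is the same as requiring $\max(T) \le n$.

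On the determinantal side, with these parameters $X_{[r_j, s_i]} = X_{[1,n]} = X_n$ and $A_{\mu_j} = 0$, so the right-hand side of \eqref{eq:G_row} reduces to
\[
C_n \det\left( h_{\lambda_i - i + j}[X_n \ominus (A_{\lambda_i} - B_{i-1} + B_j)] \right)_{i,j=1}^n,
\]
with $C_n = \prod_{i=1}^n \prod_{l=1}^n (1-\beta_i x_l) = \prod_{i,j=1}^n (1-\beta_i x_j)$. This is precisely the Jacobi--Trudi-like formula \eqref{eq:4} for $G_\lambda(\vx_n;\va,\vb)$ from Theorem~\ref{thm:JT_ab_intro}.

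Combining these two observations, Theorem~\ref{thm:main_G_row} yields
\[
\sum_{\substack{T \in \MMSVT(\lambda) \\ \max(T) \le n}} \wt(T) \;=\; G_\lambda^{\row(\vr,\vs)}(\vx;\va,\vb) \;=\; G_\lambda(\vx_n;\va,\vb),
\]
which is the desired identity. There is no real obstacle here: the corollary is essentially a matter of matching the two Jacobi--Trudi-like formulas under an allowed choice of flags, so the work lies entirely in Theorem~\ref{thm:main_G_row} (proved in Section~\ref{sec:proof-theor-refthm:m-1}) together with the previously established Theorem~\ref{thm:JT_ab_intro}. Finally, by letting $n \to \infty$ one recovers Theorem~\ref{thm:main_G_sec6}.
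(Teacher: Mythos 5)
Your proposal is exactly the paper's proof: specialize Theorem~\ref{thm:main_G_row} with $\mu=\emptyset$, $\vr=(1^n)$, $\vs=(n^n)$, note that the flagged tableau sum becomes the sum over $T\in\MMSVT(\lambda)$ with $\max(T)\le n$, and match the resulting determinant with \eqref{eq:4} of Theorem~\ref{thm:JT_ab_intro}. The argument is correct and takes the same route as the paper.
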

\begin{proof}
  Let \( \vr=(1^n) \) and \( \vs=(n^n) \), where \( (a^n) \) means the sequence
  \((a,a,\dots,a) \) with \( n \) \( a \)'s. By definition, we have
  \[
    G_\lambda^{\row(\vr, \vs)}(\vx;\va,\vb)
    = \sum_{T\in\MMSVT(\lambda), \max(T)\le n} \wt(T).
  \]
  On the other hand, by Theorems~\ref{thm:main_G_row} and \ref{thm:JT_ab_intro}, we have
  \begin{align*}
    G_\lambda^{\row(\vr, \vs)}(\vx;\va,\vb) 
    &= \prod_{i,j=1}^n  (1-\beta_i x_j) \det\left(   h_{\lambda_i-i+j}
      [X_{[r_j,s_i]}\ominus(A_{\lambda_i}-B_{i-1} + B_{j})]\right)_{i,j=1}^n\\
    &= G_\lambda(\vx_n;\va,\vb).
  \end{align*}
Combining the above two equations gives the corollary.
\end{proof}

\begin{remark}
  Note that the Jacobi--Trudi formula for the Schur function
  \( s_{\lm}(\vx) \) of shape \(\lm\) does not depend on the
  choice of \(n\) for \(n> \ell(\lambda)\), that is,
  \begin{align*}
    s_\lm(\vx)
    =\det \left( h_{\lambda_i-\mu_j-i+j}(\vx) \right)_{i,j=1}^n
    =\det \left( h_{\lambda_i-\mu_j-i+j}(\vx) \right)_{i,j=1}^{\ell(\lambda)}.
  \end{align*}
  This can be easily proved using the fact that \(h_m(\vx)=0\) for \(m<0\).
  However, since \(h_m[X\ominus Y]\not= 0\) for \(m<0\) in general, it is not
  obvious that the Jacobi--Trudi-like formula for \(G_\lm^{\row(\vr,
    \vs)}(\vx;\va,\vb)\) in Theorem~\ref{thm:main_G_row} does not depend on the
  choice of \(n\) for \(n>\ell(\lambda)\). It is still possible to prove the
  independence of \( n \) directly by showing that
  for \( i \)  and \( j \) satisfying \( j<i \) and \( \lambda_i=\mu_i=0 \),
   the \( (i,j) \)-entry of the matrix in \eqref{eq:G_row} is zero.
  \end{remark}

In the next two subsections we prove Theorems~\ref{thm:main_G_row} and \ref{thm:main_G_col}.

\subsection{Proof of Theorem~\ref{thm:main_G_row}: row-flagged Grothendieck polynomials} 
\label{sec:proof-theor-refthm:m-1}

For \( \lambda,\mu\in\Par_n, \) and \( \vr,\vs\in\NN^n \), we define
\begin{equation}\label{eq:tG}
  \tG_{\lm}^{\row(\vr,\vs)}(\vx;\va,\vb)=C \det\left(   h_{\lambda_i-\mu_j-i+j}
    [X_{[r_j,s_i]}\ominus(A_{\lambda_i}-A_{\mu_j}-B_{i-1} + B_{j})]\right)_{i,j=1}^n,
\end{equation}
where \( C=\prod_{i=1}^n \prod_{l=r_i}^{s_i} (1-\beta_i x_l) \).
We prove the Jacobi--Trudi-like identity~\eqref{eq:G_row} in Theorem~\ref{thm:main_G_row} by showing that 
\( G_{\lm}^{\row(\vr,\vs)}(\vx;\va,\vb)$ and $\tG_{\lm}^{\row(\vr,\vs)}(\vx;\va,\vb) \) 
satisfy the same recurrence relations and initial conditions.
 The idea is based on a proof of the Jacobi--Trudi-like 
 formula for flagged Schur functions due to Wachs \cite{Wachs_1985}.

 Throughout this section we assume that \( n \) is fixed and $\epsilon_k$
 denotes the 0-1 sequence of length \( n \) whose $k$th entry is \( 1 \) and all
 other entries are \( 0 \).

 In order to prove Theorem~\ref{thm:main_G_row} we need several lemmas. Let \(
 \phi_{\vb} \) be the substitution homomorphism that replaces \( \beta_{i} \)
 by \( \beta_{i+1} \) for each \( i\ge 1 \). For example, \( \phi_{\vb}(x_1
 \beta_1\beta_3 + x_2 \alpha_2 \beta_1 + \alpha_3) = x_1 \beta_2\beta_4 + x_2 \alpha_2 \beta_2
 + \alpha_3 \).

 \begin{lem}\label{lem:rec1}
   Let \( \lambda,\mu\in\Par_n,\vr,\vs\in\NN^n \)
  with \( \mu\subseteq\lambda \) and let \( 1\le k\le n-1 \) be an integer such that
  $\mu_k\ge\lambda_{k+1}$. Then
\begin{align*}
  G_{\lm}^{\row(\vr,\vs)}(\vx;\va,\vb)
  &=G_{{\lambda}^{(1)}/{\mu}^{(1)}}^{\row({\vr}^{(1)},{\vs}^{(1)})}(\vx;\va,\vb)\cdot\phi_{\vb}^k
    \left(G_{{\lambda}^{(2)}/{\mu}^{(2)}}^{\row({\vr}^{(2)},{\vs}^{(2)})}(\vx;\va,\vb)\right),\\
  \tG_{\lambda/\mu}^{\row(\vr,\vs)}(\vx;\va,\vb)
  &=\tG_{{\lambda}^{(1)}/{\mu}^{(1)}}^{\row({\vr}^{(1)},{\vs}^{(1)})}(\vx;\va,\vb)\cdot\phi_{\vb}^k
    \left(\tG_{{\lambda}^{(2)}/{\mu}^{(2)}}^{\row({\vr}^{(2)},{\vs}^{(2)})}(\vx;\va,\vb)\right),
\end{align*}
where \( \gamma^{(1)} = (\gamma_1,\dots,\gamma_k) \) and
    \( \gamma^{(2)} = (\gamma_{k+1},\dots,\gamma_n) \)
    for each \( \gamma\in \{\lambda,\mu,\vr,\vs\} \).
\end{lem}
\begin{proof}
  Observe that \( \lm \) is the union of \( {\lambda}^{(1)}/{\mu}^{(1)} \) and
  \( {\lambda}^{(2)}/{\mu}^{(2)} \) whose column indices are disjoint. Thus, the
  first identity directly follows from the definition of
  $G_{\lambda/\mu}^{\row(\vr,\vs)}(\vx;\va,\vb)$.
  
  For the second identity, it is easy to check that both sides have the same factor
  \( \prod_{i=1}^n \prod_{l=r_i}^{s_i} (1-\beta_i x_l). \)
  We will show that the matrix in
  \eqref{eq:tG} is a block upper triangular matrix with diagonal blocks of sizes
  \( k\times k \) and \( (n-k)\times (n-k) \). Since
  $\mu_1\ge\cdots\ge\mu_k\ge\lambda_{k+1}\ge\cdots\ge\lambda_{n}$,
  we have $\lambda_i-\mu_j-i+j<0$ for all \( k+1\le i\le n \) and $1\le j\le k$.
  Furthermore, for such \( i \) and \( j \), the \( (i,j) \)-entry of the matrix
  in \eqref{eq:tG} is equal to
\begin{align*}
  &\sum_{\ell\ge0 }h_{\lambda _{i}-\mu _{j}-i+j+\ell }\left[
  X_{[r_{j},s_{i}]}\right] h_{\ell }\left[ A_{\lambda _{i}}-A_{\mu
  _{j}}-B_{i-1}+B_{j}\right]\\
  &=\sum_{\ell\ge-\lambda_{i}+\mu_{j}+i-j}h_{\lambda _{i}-\mu _{j}-i+j+\ell }\left[
    X_{[r_{j},s_{i}]}\right] h_{\ell }\left[ A_{\lambda _{i}}-A_{\mu
    _{j}}-B_{i-1}+B_{j}\right]\\ 
  &=\sum_{\ell\ge-\lambda_{i}+\mu_{j}+i-j}h_{\lambda _{i}-\mu _{j}-i+j+\ell }\left[
    X_{[r_{j},s_{i}]}\right] (-1)^\ell e_{\ell }\left(\alpha_{\lambda_{i}+1},\dots,\alpha_{\mu_j},\beta_{j+1},\dots,\beta_{i-1}\right),
\end{align*}
which is equal to \( 0 \)
because for each \( \ell\ge-\lambda_{i}+\mu_{j}+i-j \), the number of variables in
\( e_\ell(\alpha_{\lambda_{i}+1},\dots,\alpha_{\mu_j},\beta_{j+1},\dots,\beta_{i-1}) \) is \( (\mu_j-\lambda_i)+(i-1-j)<\ell \). Thus the matrix in
\eqref{eq:tG} is a block upper triangular matrix and we obtain the second
identity.
\end{proof}

The following two lemmas show that
\( G_{\lm}^{\row(\vr,\vs)}(\vx;\va,\vb) \) and \( \tG_{\lm}^{\row(\vr,\vs)}(\vx;\va,\vb) \)
have the same initial conditions.

\begin{lem}\label{lem:G=1}
  For \( \lambda\in\Par_n,\vr,\vs\in\NN^n \),
  \[
    G_{\lambda/\lambda}^{\row(\vr,\vs)}(\vx;\va,\vb) = 
    \tG_{\lambda/\lambda}^{\row(\vr,\vs)}(\vx;\va,\vb) = 1.
  \]
\end{lem}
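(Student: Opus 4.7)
The identity $G_{\lambda/\lambda}^{\row(\vr,\vs)}(\vx;\va,\vb)=1$ is immediate from the definition: the skew shape $\lambda/\lambda$ has no cells, so $\MMSVT^{\row(\vr,\vs)}(\lambda/\lambda)$ consists only of the empty filling, whose weight is the empty product $1$.

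For the identity $\tG_{\lambda/\lambda}^{\row(\vr,\vs)}(\vx;\va,\vb)=1$, my plan is to evaluate the determinant in \eqref{eq:tG} with $\mu=\lambda$ directly and show it equals $1/C$. I will argue that the matrix is upper triangular and then read off the diagonal. For an entry below the diagonal ($i>j$), since $\lambda_i\le\lambda_j$ and $i>j$ one has
\[
  A_{\lambda_i}-A_{\lambda_j}-B_{i-1}+B_j
  = -(\alpha_{\lambda_i+1}+\cdots+\alpha_{\lambda_j})-(\beta_{j+1}+\cdots+\beta_{i-1}),
\]
which is a purely ``negative'' expression in $N:=(\lambda_j-\lambda_i)+(i-j-1)$ variables. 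Setting $m=\lambda_i-\lambda_j-i+j$ and expanding
$h_m[X_{[r_j,s_i]}\ominus W]=\sum_{b\ge 0}h_{m+b}[X_{[r_j,s_i]}]\,h_b[W]$
forces $b\ge -m=N+1$, while $h_b[W]=(-1)^b\,e_b(\alpha_{\lambda_i+1},\dots,\alpha_{\lambda_j},\beta_{j+1},\dots,\beta_{i-1})=0$ for every $b>N$. Hence the $(i,j)$-entry vanishes.

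For the diagonal, since $A_{\lambda_i}-A_{\lambda_i}-B_{i-1}+B_i=\beta_i$, I will use the generating function for complete homogeneous symmetric functions to compute
\[
  h_0[X_{[r_i,s_i]}\ominus \beta_i]
  = \sum_{a\ge 0} h_a(x_{r_i},\dots,x_{s_i})\,\beta_i^a
  = \prod_{l=r_i}^{s_i}\frac{1}{1-\beta_i x_l}.
\]
Multiplying the diagonal entries over $i=1,\dots,n$ gives $\det = 1/C$, whence $\tG_{\lambda/\lambda}^{\row(\vr,\vs)} = C\cdot(1/C)=1$. The only subtlety--and the point I would emphasize in a full write-up--is that $h_0[Y\ominus Z]$ is an infinite series rather than simply $1$ whenever $Z$ is nontrivial; this is precisely what produces the reciprocal factor that cancels the prefactor $C$. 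Everything else reduces to bookkeeping with plethystic sums and the observation that $e_b$ of $N$ variables vanishes for $b>N$.
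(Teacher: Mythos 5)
Your proof is correct and follows essentially the same route as the paper: the paper reduces the determinant to the product of diagonal entries by iterating Lemma~\ref{lem:rec1} (whose proof is exactly your below-diagonal vanishing computation, counting variables in $e_b$), and then evaluates $H_{i,i}=\prod_{l=r_i}^{s_i}(1-\beta_i x_l)^{-1}$ just as you do. Your direct verification of upper-triangularity merely inlines that lemma, and your handling of the empty-product case $r_i>s_i$ is consistent with the paper's remark.
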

\begin{proof}
  Since \(\MMSVT^{\row(\vr, \vs)}(\lambda/\lambda)\) has only one element,
  namely the empty tableau, we have
  \( G_{\lambda/\lambda}^{\row(\vr,\vs)} (\vx;\va,\vb)=1\).

  For the second equality, let  \( H_{i,j} = h_{\lambda_i-\lambda_j-i+j}
  [X_{[r_j,s_i]}\ominus(A_{\lambda_i}-A_{\lambda_j}-B_{i-1} + B_{j})]\) so that 
  \(\tG_{\lambda/\lambda}^{\row(\vr,\vs)}(\vx;\va,\vb)=C\det(H_{i,j})_{i,j=1}^{n}\),
  where \(C=\prod_{i=1}^{n}\prod_{l=r_i}^{s_i} (1-\beta_i x_l) \).
  If we set \( \mu=\lambda \), then since
  \( \mu_k=\lambda_k\ge\lambda_{k+1} \) for all \( 1\le k\le n-1 \), we have
  \(C\det(H_{i,j})_{i,j=1}^{n}=C\prod_{i=1}^{n}H_{i,i} \) by
  iterated application of Lemma~\ref{lem:rec1}. Since 
  \begin{equation}\label{eq:2} 
    H_{i,i}=\sum_{\ell\ge0}h_\ell[X_{[r_i,s_i]}]h_\ell[\beta_i]
    =\sum_{\ell\ge0}h_\ell[X_{[r_i,s_i]}]\beta_i^\ell
    =\prod_{l=r_i}^{s_i}(1-\beta_ix_l)^{-1},
  \end{equation}
  we obtain \( \tG_{\lambda/\lambda}^{\row(\vr,\vs)}(\vx;\va,\vb)=C\det(H_{i,j})_{i,j=1}^{n}=C\cdot C^{-1}=1 \).
  Note that if \( r_i>s_i \), then \( X_{[r_i,s_i]}=0 \), and therefore
  \eqref{eq:2} is still true since
  \( \sum_{\ell\ge0}h_\ell[X_{[r_i,s_i]}]\beta_i^\ell=1
=\prod_{l=r_i}^{s_i}(1-\beta_ix_l)^{-1} \).
\end{proof}

\begin{lem}\label{lem:rec4}
Let \( \lambda,\mu\in\Par_n,\vr,\vs\in\RPar_n \) with \( \mu\subseteq\lambda \).
If \( s_k<r_k \) and \( \mu_k<\lambda_k \) for some \( 1\le k\le n \), then
\[
  G_{\lambda/\mu}^{\row(\vr,\vs)}(\vx;\va,\vb)= \tG_{\lambda/\mu}^{\row(\vr,\vs)}(\vx;\va,\vb)=0.
\]
\end{lem}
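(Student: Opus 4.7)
The plan is to treat the two vanishing assertions separately, since $G_{\lambda/\mu}^{\row(\vr,\vs)}(\vx;\va,\vb)$ is defined combinatorially while $\tG_{\lambda/\mu}^{\row(\vr,\vs)}(\vx;\va,\vb)$ is defined by a determinant. The combinatorial half is immediate: because $\mu_k<\lambda_k$, the skew shape $\lm$ contains a cell in row $k$, and any entry of a marked multiset-valued tableau in that cell must be a positive integer lying in $[r_k,s_k]$. Since $s_k<r_k$ this interval contains no positive integer, so $\MMSVT^{\row(\vr,\vs)}(\lm)=\emptyset$ and the generating function vanishes.

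For the algebraic half, the prefactor $C$ need not vanish---its $k$-th factor $\prod_{l=r_k}^{s_k}(1-\beta_k x_l)$ is an empty product equal to $1$---so I must prove directly that the determinant in \eqref{eq:tG} is zero. My plan is to locate an all-zero rectangular block of total dimensions exceeding $n$, so that every term in the Leibniz expansion is forced to hit it. Specifically, I will show that the submatrix with row index set $R=\{1,\dots,k\}$ and column index set $C=\{k,\dots,n\}$ is identically zero. Since $|R|+|C|=k+(n-k+1)=n+1>n$, for every $\sigma\in\Sym_n$ the pigeonhole count $|\sigma(R)\cap C|\ge|R|+|C|-n\ge 1$ forces some $(i,\sigma(i))$ to land in the block, so every Leibniz term vanishes and the determinant collapses.

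To verify that this block is zero, I would combine both hypotheses. Using monotonicity of $\vr,\vs\in\RPar_n$, for $i\le k\le j$ one has $s_i\le s_k<r_k\le r_j$, hence $X_{[r_j,s_i]}=0$, and the $(i,j)$-entry reduces to $h_m[0\ominus Y_{i,j}]$ with $m=\lambda_i-\mu_j-i+j$ and $Y_{i,j}=A_{\lambda_i}-A_{\mu_j}-B_{i-1}+B_j$. By the defining formula $h_m[Y\ominus Z]=\sum_{a-b=m}h_a[Y]h_b[Z]$ together with $h_a[0]=\chi(a=0)$, this quantity equals $h_{-m}[Y_{i,j}]$ when $m\le 0$ and $0$ when $m>0$. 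It thus remains to check that $m>0$ throughout the block. The sequences $(\lambda_i-i)$ and $(\mu_j-j)$ are strictly decreasing since $\lambda,\mu$ are partitions, so for $i\le k\le j$ we obtain $\lambda_i-i\ge\lambda_k-k>\mu_k-k\ge\mu_j-j$, where the strict inequality uses exactly the hypothesis $\lambda_k>\mu_k$. I do not anticipate any genuine obstacle; the argument is essentially bookkeeping of monotonicities combined with the standard linear-algebra fact that a determinant vanishes whenever it has a zero block of total dimension exceeding $n$.
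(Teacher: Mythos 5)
Your proposal is correct and follows essentially the same route as the paper: the combinatorial side vanishes because the row-$k$ cell admits no entry, and the determinant vanishes because the block with rows $1,\dots,k$ and columns $k,\dots,n$ is zero, using $s_i\le s_k<r_k\le r_j$ to kill $X_{[r_j,s_i]}$ and $\mu_j\le\mu_k<\lambda_k\le\lambda_i$ (equivalently your monotonicity of $\lambda_i-i$ and $\mu_j-j$) to force the relevant $h$-index to be positive. The paper states the zero-block fact without the Leibniz/pigeonhole justification you supply, but the arguments are the same.
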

\begin{proof}
  Since \(\MMSVT^{\row(\vr, \vs)}(\lambda/\mu)=\emptyset\),
  we have \( G_{\lambda/\mu}^{\row(\vr,\vs)}(\vx;\va,\vb)=0 \).   
  To prove the second equality, we claim that for all $1\le i\le k$ and \( k\le j\le n \),
    the \( (i,j) \)-entry of the matrix in \eqref{eq:tG} is \( 0, \) that is,
    \[
        \sum_{\ell\ge0 }h_{\lambda _{i}-\mu _{j}-i+j+\ell }\left[
          X_{[r_{j},s_{i}]}\right] h_{\ell }\left[ A_{\lambda _{i}}-A_{\mu
            _{j}}-B_{i-1}+B_{j}\right]=0.
    \]
    Observe that $s_i\le s_k<r_k\le r_j$ and $\mu_j\le\mu_k<\lambda_k\le\lambda_i$
    by the assumptions of the lemma. Thus \(
    h_{\lambda_i-\mu_j-i+j+\ell}[X_{[r_j,s_i]}]=0\) for \( \ell\ge0\) since \(
    X_{[r_j,s_i]}=0 \) and \( \lambda_i-\mu_j-i+j+\ell>0 \). Therefore, the claim is
    true, which implies $\tG_{\lambda/\mu}^{\row(\vr,\vs)}(\vx;\va,\vb)=0$
    (because if an \( n\times n \)-matrix \( (a_{i,j}) \) satisfies \( a_{i,j}=0 \)
    for all \( 1\le i\le k \) and \( k\le j\le n \), then its determinant is 0).
\end{proof}

Recall that for \( \lambda\in \Par_n \) and \( \vs\in \RPar_n \) we assume \(
\lambda_{n+1}=0 \) and \( s_0=0 \). The following two lemmas show that \(
G_{\lm}^{\row(\vr,\vs)}(\vx;\va,\vb) \) and \(
\tG_{\lm}^{\row(\vr,\vs)}(\vx;\va,\vb) \) have the same recursions.

\begin{lem}\label{lem:rec2} 
  Let \( \lambda,\mu\in\Par_n,\vr,\vs\in\RPar_n \) with \( \mu\subseteq\lambda \). 
  Let \( 1\le k\le n \) be an integer satisfying the following conditions:
\begin{enumerate}
\item \( \mu_k<\lambda_k \),
\item \( r_k\le s_k \),
\item \( s_{k-1}<s_k \), and
\item \( \lambda_{k}>\lambda_{k+1} \).
\end{enumerate}
Then
\begin{align*}
  G_{\lambda/\mu}^{\row(\vr,\vs)}(\vx;\va,\vb)
  &=\frac{1-x_{s_k}\beta_k}{1-x_{s_k}\alpha_{\lambda_k}}G_{\lm}^{\row(\vr,\vs-\epsilon_k)}(\vx;\va,\vb)
    +\frac{x_{s_k}}{1-x_{s_k}\alpha_{\lambda_k}}G_{(\lambda-\epsilon_k)/\mu}^{\row(\vr,\vs)}(\vx;\va,\vb),\\
  \tG_{\lambda/\mu}^{\row(\vr,\vs)}(\vx;\va,\vb)
  &=\frac{1-x_{s_k}\beta_k}{1-x_{s_k}\alpha_{\lambda_k}}\tG_{\lm}^{\row(\vr,\vs-\epsilon_k)}(\vx;\va,\vb)
    +\frac{x_{s_k}}{1-x_{s_k}\alpha_{\lambda_k}}\tG_{(\lambda-\epsilon_k)/\mu}^{\row(\vr,\vs)}(\vx;\va,\vb).
\end{align*}
Moreover, if \( s_k=1 \), then
\[
 G_{\lm}^{\row(\vr,\vs-\epsilon_k)}(\vx;\va,\vb) 
 = \tG_{\lm}^{\row(\vr,\vs-\epsilon_k)}(\vx;\va,\vb) =0.
\]
\end{lem}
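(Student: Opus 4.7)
The plan is to establish each recurrence by a separate method---a bijection for $G$ and a row-wise determinant identity for $\tG$---and then verify the moreover-clause directly.

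For the $G$-recurrence I would argue bijectively. By assumption~(1), $(k,\lambda_k)\in\lm$, and by assumption~(4) no cell of $\lm$ lies directly below it. Given $T\in\MMSVT^{\row(\vr,\vs)}(\lm)$, split into two cases by the content of $T(k,\lambda_k)$: Case~A, where some entry of $T(k,\lambda_k)$ is strictly less than $s_k$; Case~B, where $T(k,\lambda_k)$ consists entirely of $s_k$'s. In Case~A, deleting every occurrence of $s_k$ (marked or unmarked) from $T(k,\lambda_k)$ produces $T'\in\MMSVT^{\row(\vr,\vs-\epsilon_k)}(\lm)$: the cell stays nonempty, its new maximum is at most $s_k-1$, and the row-weak-increase condition then forces the same bound on every other cell of row~$k$. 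The inverse appends $p\ge 0$ unmarked copies of $s_k$ together with an independent choice of inserting one marked $s_k^*$ (always permitted since the preceding entry is $<s_k$). Summing the weight ratios gives
\[
  \Bigl(\sum_{p\ge 0}(x_{s_k}\alpha_{\lambda_k})^p\Bigr)\bigl(1-x_{s_k}\beta_k\bigr)=\frac{1-x_{s_k}\beta_k}{1-x_{s_k}\alpha_{\lambda_k}}.
\]
In Case~B, $T(k,\lambda_k)=\{s_k^p\}$ with $p\ge 1$ and all entries unmarked (the marking rule forces this); deleting the cell produces $T''\in\MMSVT^{\row(\vr,\vs)}((\lambda-\epsilon_k)/\mu)$, using~(1) for $\mu\subseteq\lambda-\epsilon_k$, (4) so that $\lambda-\epsilon_k$ is a partition, and (3) so that the column condition at $(k-1,\lambda_k)$ is preserved (entries there are bounded by $s_{k-1}<s_k$). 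Summing $\sum_{p\ge 1}x_{s_k}^p\alpha_{\lambda_k}^{p-1}=x_{s_k}/(1-x_{s_k}\alpha_{\lambda_k})$ yields the second term.

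For the $\tG$-recurrence, clearing the denominator and using $C=(1-x_{s_k}\beta_k)\tilde C$ for the outer $C$-factors (valid by assumption~(2)), it suffices to prove
\[
  (1-x_{s_k}\alpha_{\lambda_k})\det M=\det M'+x_{s_k}\det M'',
\]
where $M,M',M''$ are the matrices in~\eqref{eq:tG} associated to $(\lm,\vr,\vs)$, $(\lm,\vr,\vs-\epsilon_k)$, and $((\lambda-\epsilon_k)/\mu,\vr,\vs)$. These matrices agree outside row~$k$, so multilinearity reduces the claim to the entrywise identity $(1-x_{s_k}\alpha_{\lambda_k})M_{k,j}=M'_{k,j}+x_{s_k}M''_{k,j}$ for every $j$. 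When $r_j\le s_k$, I apply Lemma~\ref{lem:h_m[Z]} twice inside the plethystic sum $h_N[Y_j\ominus Z_j]=\sum_{a-b=N}h_a[Y_j]h_b[Z_j]$: once to strip $x_{s_k}$ from $Y_j=X_{[r_j,s_k]}$, giving $M_{k,j}=M'_{k,j}+x_{s_k}h_{N-1}[Y_j\ominus Z_j]$, and once to strip $\alpha_{\lambda_k}$ from $A_{\lambda_k}\subset Z_j$, giving $h_{N-1}[Y_j\ominus Z_j]=M''_{k,j}+\alpha_{\lambda_k}M_{k,j}$; combining yields the identity. When $r_j>s_k$, the reverse-partition property of $\vr$ together with assumption~(2) forces $j>k$, and then assumption~(4) with $\mu_j\le\mu_{k+1}\le\lambda_{k+1}\le\lambda_k-1$ gives $\lambda_k-\mu_j-k+j\ge 2$, so $M_{k,j}=M'_{k,j}=M''_{k,j}=0$.

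For the moreover-clause, if $s_k=1$ then the positivity of entries of $\vs\in\RPar_n$ and assumption~(3) force $k=1$. The equality $G_{\lm}^{\row(\vr,\vs-\epsilon_1)}(\vx;\va,\vb)=0$ follows from Remark~\ref{rem:s_k=1} since $\mu_1<\lambda_1$. For $\tG$, every $(1,j)$-entry of the matrix for $(\lm,\vr,\vs-\epsilon_1)$ equals $h_{\lambda_1-\mu_j-1+j}[0\ominus Z_j]$, and since $\mu_j\le\mu_1\le\lambda_1-1$ the index $\lambda_1-\mu_j-1+j\ge j\ge 1$ is positive, so every such entry vanishes, giving a zero row and hence a zero determinant. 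The main obstacle I expect is the $\tG$-recurrence: sequencing the two plethystic strippings via Lemma~\ref{lem:h_m[Z]} correctly and carefully disposing of the corner $r_j>s_k$ where entries vanish a priori. All remaining verifications---propagation of flag conditions through the two bijections and the $C$-factor identity---are routine bookkeeping under assumptions~(1)--(4).
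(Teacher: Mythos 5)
Your proposal is correct and follows essentially the same route as the paper's proof: the same split of $\MMSVT^{\row(\vr,\vs)}(\lm)$ according to whether $T(k,\lambda_k)$ contains an entry below $s_k$ or consists solely of $s_k$'s, the same geometric-series weight computations, the same reduction of the determinant identity to a single-row entrywise identity via multilinearity with two applications of Lemma~\ref{lem:h_m[Z]}, and the same vanishing argument when $r_j>s_k$ (your bound $\mu_j\le\lambda_{k+1}\le\lambda_k-1$ versus the paper's $\mu_j\le\mu_k<\lambda_k$ is an immaterial variation). The moreover-clause is also handled as in the paper, with the harmless extra observation that $s_k=1$ forces $k=1$.
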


\begin{proof}
  Considering the elements in the cell \( (k,\lambda_k) \),
  we partition $\MMSVT^{\row(\vr,\vs)}(\lm)$ into two sets:
  \begin{align*}
    S_1 &=\left\{T\in \MMSVT^{\row(\vr,\vs)}(\lm): T(k,\lambda_k) \text{
    contains an integer }\ell<s_k\right\},\\
    S_2&=\left\{T\in \MMSVT^{\row(\vr,\vs)}(\lm): T(k,\lambda_k)\text{
    consists only of \( s_k \)'s} \right\}.
  \end{align*}
  For each $T\in S_1$, by extracting all $s_k$'s from $T(k,\lambda_k)$, we
  obtain
  \[
    \sum_{T\in
      S_1}
    \wt(T)=\frac{1-x_{s_k}\beta_k}{1-x_{s_k}\alpha_{\lambda_k}}G_{\lm}^{\row(\vr,\vs-\epsilon_k)}(\vx;\va,\vb).
  \]
  Note that if \( s_k=1 \), then \( S_1=\emptyset \) and, by
  Remark~\ref{rem:s_k=1},
  \( \MMSVT^{\row(\vr, \vs-\epsilon_k)}(\lm)=\emptyset \). Hence in
  this case both sides of the above identity are equal to \( 0 \)
  and in particular we have
  \( G_{\lm}^{\row(\vr,\vs-\epsilon_k)}(\vx;\va,\vb)=0 \). For each
  $T\in S_2$, extracting all $s_k$'s from $T(k,\lambda_k)$ and
  deleting the cell \( (k,\lambda_k) \), we obtain
  \[
    \sum_{T\in
      S_2}
    \wt(T)=\frac{x_{s_k}}{1-x_{s_k}\alpha_{\lambda_k}}G_{(\lambda-\epsilon_k)/\mu}^{\row(\vr,\vs)}(\vx;\va,\vb).
  \]
  Adding the above two equations gives the first identity.

For the second identity let
  \begin{align*}
    H_{i,j} &= h_{\lambda_i-\mu_j-i+j} [X_{[r_j,s_i]}\ominus(A_{\lambda_i}-A_{\mu_j}-B_{i-1} + B_{j})],\\
    F_{i,j} &= h_{\lambda_i-\mu_j-i+j} [X_{[r_j,s_i-\chi(i=k)]}\ominus(A_{\lambda_i}-A_{\mu_j}-B_{i-1} + B_{j})],\\
    G_{i,j} &= h_{\lambda_i-\mu_j-i+j-\chi(i=k)} [X_{[r_j,s_i]}\ominus(A_{\lambda_i-\chi(i=k)}-A_{\mu_j}-B_{i-1} + B_{j})].
  \end{align*}
By \eqref{eq:tG} the second identity is written as
  \[
    C \det(H_{i,j})  = \frac{C}{1-x_{s_k}\alpha_{\lambda_k}} \det(F_{i,j})
    + \frac{C x_{s_k}}{1-x_{s_k}\alpha_{\lambda_k}} \det(G_{i,j}),
  \]
  where \(C=\prod_{i=1}^{n}\prod_{l=r_i}^{s_i} (1-\beta_i x_l) \).
  Since \( H_{i,j}=F_{i,j}=G_{i,j} \) for all \( 1\le i,j\le n \) with \( i\ne k \), by the linearity of a
  determinant in its \( k \)th row, it suffices to show that
\begin{equation}\label{eq:HFG}
  H_{k,j}=\frac{1}{1-x_{s_k}\alpha_{\lambda_k}}F_{k,j}+\frac{x_{s_k}}{1-x_{s_k}\alpha_{\lambda_k}}G_{k,j}
\end{equation}
for all \( 1\le j\le n\). For brevity let \( m=\lambda _{k}-\mu _{j}-k+j \) and \( Z=A_{\lambda
  _{k}}-A_{\mu _{j}}-B_{k-1}+B_{j}\) so that
\begin{align}
  \label{eq:Hik}
  H_{k,j}&=\sum_{\ell\ge0 }h_{m+\ell }\left[
           X_{[r_{j},s_{k}]}\right] h_{\ell }[Z],  \\ 
  \label{eq:Fik}
  F_{k,j}&=\sum_{\ell\ge0 }h_{m+\ell }\left[
           X_{[r_{j},s_{k}-1]}\right] h_{\ell }[Z], \\
  \label{eq:Gik}
  G_{k,j}&=\sum_{\ell\ge0 }h_{m+\ell-1}\left[
           X_{[r_{j},s_{k}]}\right] h_{\ell }[Z-\alpha_{\lambda_k}].
\end{align}

We consider the two cases  \( r_j\le s_k\) and \( r_j> s_k\).
First, suppose \( r_j\le s_k\). We will express \( F_{k,j} \) in terms of
\( H_{k,j} \) and \( G_{k,j} \). Since \( h_{m+\ell}[X_{[r_{j},s_{k}-1]}] =
h_{m+\ell}[X_{[r_{j},s_{k}]}-x_{s_k}] \), by Lemma~\ref{lem:h_m[Z]} we have
\begin{align*}
  h_{m+\ell }\left[X_{[r_{j},s_{k}-1]}\right]
  &= h_{m+\ell }\left[X_{[r_{j},s_{k}]}\right] - x_{s_k}h_{m+\ell-1}\left[X_{[r_{j},s_{k}]}\right],\\
  h_{\ell}[Z]
  &= h_{\ell }[Z-\alpha_{\lambda_k}] +\alpha_{\lambda_k}h_{\ell-1}[Z].
\end{align*}
Thus
\begin{align*}
  F_{k,j}&=\sum_{\ell\ge0 }h_{m+\ell }\left[
    X_{[r_{j},s_{k}]}\right] h_{\ell }[Z]
  -x_{s_k} \sum_{\ell\ge0 }h_{m+\ell-1}\left[
    X_{[r_{j},s_{k}]}\right] h_{\ell }[Z]\\
         &= H_{k,j}-x_{s_k} \sum_{\ell\ge0 }h_{m+\ell-1}\left[
           X_{[r_{j},s_{k}]}\right] \left(h_{\ell }[Z-\alpha_{\lambda_k}] +\alpha_{\lambda_k}h_{\ell-1}[Z]\right)\\
         &= H_{k,j}-x_{s_k} G_{k,j} -x_{s_k}\alpha_{\lambda_k} H_{k,j},
\end{align*}
which is equivalent to \eqref{eq:HFG}.

Now suppose \( r_j>s_k\). We will show that \( H_{k,j} = F_{k,j} = G_{k,j}=0 \),
which implies \eqref{eq:HFG}. First observe that, since \(
X_{[r_j,s_k]}=X_{[r_j,s_k-1]}=0 \) and \( h_t[0]=\chi(t=0) \), we obtain from
\eqref{eq:Hik}, \eqref{eq:Fik}, and \eqref{eq:Gik} that
\begin{equation}\label{eq:h-m}
  H_{k,j} = F_{k,j} = h_{-m}[Z],  \qquad G_{k,j} = h_{1-m}[Z-\alpha_{\lambda_k}], 
\end{equation}
where \( m=\lambda _{k}-\mu _{j}-k+j \) and \( Z=A_{\lambda _{k}}-A_{\mu
  _{j}}-B_{k-1}+B_{j}\) as before and \( h_t(\vx)=0 \) for \( t<0 \). Since
\( r_k\le s_k<r_j \) and \( \vr\in\RPar_n \), we must have \( j>k \). Thus \(
\lambda_k>\mu_k\ge\mu_j \) and hence \( m=(\lambda_k-\mu_j)+(j-k)\ge1+1\), which
together with \eqref{eq:h-m} implies \( H_{k,j}=F_{k,j}=G_{k,j}=0\). Thus, \eqref{eq:HFG} holds. 

To complete the proof, it remains to show that
\( \tG_{\lambda/\mu}^{\row(\vr,\vs-\epsilon_k)}(\vx;\va,\vb)=0 \) for the case
\( s_k=1 \). If \( s_k=1 \), then \( r_j>s_k-1=0 \). Applying the same
argument as before, we have \( F_{k,j}=0 \) for any \( j \) and hence
\( \tG_{\lambda/\mu}^{\row(\vr,\vs-\epsilon_k)}(\vx;\va,\vb)=0. \)
\end{proof}

\begin{lem}\label{lem:rec3}
  Let \( \lambda,\mu\in\Par_n,\vr,\vs\in\RPar_n \) with \( \mu\subseteq\lambda
  \). Let \( 1\le k\le n-1 \) be an integer satisfying the following conditions:
\begin{enumerate}
\item \( \mu_k<\lambda_k \),
\item \( r_k\le s_k \),
\item \( s_k=s_{k+1} \), and
\item \( \lambda_k=\lambda_{k+1} \).
\end{enumerate}
Then
\begin{align*}
G_{\lambda/\mu}^{\row(\vr,\vs)}(\vx;\va,\vb)&=G_{\lambda/\mu}^{\row(\vr,\vs-\epsilon_k)}(\vx;\va,\vb),\\
\tG_{\lambda/\mu}^{\row(\vr,\vs)}(\vx;\va,\vb)&=\tG_{\lambda/\mu}^{\row(\vr,\vs-\epsilon_k)}(\vx;\va,\vb).
\end{align*}
Moreover, if \( s_k=1 \), then
\[
 G_{\lm}^{\row(\vr,\vs-\epsilon_k)}(\vx;\va,\vb) 
 = \tG_{\lm}^{\row(\vr,\vs-\epsilon_k)}(\vx;\va,\vb) =0.
\]
\end{lem}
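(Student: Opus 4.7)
The plan is to exploit the alignment of rows $k$ and $k{+}1$ provided by conditions (3)--(4), and to prove the identities by (i) a column-strict argument on tableaux for $G$ and (ii) multilinearity combined with a plethystic row reduction for $\tG$.

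For the combinatorial identity, conditions (1) and (4) imply that both $(k,\lambda_k)$ and $(k+1,\lambda_k)$ lie in $\lm$. Column-strictness plus the row-$k{+}1$ flag gives $\max T(k,\lambda_k) < \min T(k+1,\lambda_k) \le s_{k+1} = s_k$, so every entry of row $k$ is at most $s_k-1$ and $\MMSVT^{\row(\vr,\vs)}(\lm) = \MMSVT^{\row(\vr,\vs-\epsilon_k)}(\lm)$. This proves the first identity for $G$; the moreover for $G$ is immediate from Remark~\ref{rem:s_k=1}.

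For the determinantal identity let $H_{i,j}$ and $F_{i,j}$ be the $(i,j)$-entries of the matrix in \eqref{eq:tG} for $\vs$ and $\vs-\epsilon_k$; they agree for $i \ne k$. A case analysis on $r_j$ in exact parallel with the proof of Lemma~\ref{lem:rec2} (when $r_j > s_k$, $\RPar$-monotonicity forces $j > k$ and $m_j := \lambda_k-\mu_j-k+j \ge 2$, so the relevant entries all vanish; when $r_j \le s_k$ we apply Lemma~\ref{lem:h_m[Z]} to $X_{[r_j,s_k]}$) gives
\[
H_{k,j} = F_{k,j} + x_{s_k}M_{k,j}, \qquad M_{k,j} := h_{m_j-1}\bigl[X_{[r_j,s_k]} \ominus (A_{\lambda_k}-A_{\mu_j}-B_{k-1}+B_j)\bigr].
\]
Multilinearity in row $k$ reduces the first identity for $\tG$ to the key equality $\det(M) = \beta_k\det(H)$. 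Conditions (3)--(4) make row $k+1$ of $M$ equal to $H_{k+1,j} = h_{m_j-1}[X_{[r_j,s_k]} \ominus W_j]$ with $W_j := A_{\lambda_k}-A_{\mu_j}-B_k+B_j$, differing from the alphabet in $M_{k,j}$ only by the single variable $\beta_k$. Expanding both entries via $h_b[W+\beta_k] = \sum_{d \ge 0}\beta_k^d h_{b-d}[W]$ produces
\[
M_{k,j} = \sum_{c \ge 0}\beta_k^c h_{m_j-1+c}[X_{[r_j,s_k]} \ominus W_j], \qquad H_{k,j} = \sum_{c \ge 0}\beta_k^c h_{m_j+c}[X_{[r_j,s_k]} \ominus W_j],
\]
and the $c=0$ term of $M_{k,j}$ equals $H_{k+1,j}$. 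Subtracting row $k+1$ from row $k$ in $M$ (which preserves $\det M$) therefore replaces $M_{k,j}$ by $\beta_k H_{k,j}$, yielding $\det(M) = \beta_k\det(H)$; hence $\det(F) = (1-\beta_k x_{s_k})\det(H)$, and multiplying by the prefactor ratio $C'=C/(1-\beta_k x_{s_k})$ gives $\tG_{\lm}^{\row(\vr,\vs)} = \tG_{\lm}^{\row(\vr,\vs-\epsilon_k)}$.

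The main obstacle is the moreover assertion for $\tG$: after the first identity has been established, one still must prove $\det(H) = 0$ under the additional hypothesis $s_k = s_{k+1} = 1$. Here I would use the explicit rational form of $h_m[x_1 \ominus Z]$ valid for $m \ge 0$ to check that $H_{k,j}/H_{k+1,j} = x_1/(1-x_1\beta_k)$ is independent of $j$ for every $j \ge k$ with $r_j = 1$, while for $j \ge k$ with $r_j \ge 2$ both $H_{k,j}$ and $H_{k+1,j}$ vanish since $X_{[r_j,1]}=0$ and $m_j \ge 2$. Consequently every $2\times 2$ minor of rows $k,k+1$ supported in columns $\ge k$ is zero; after the row operation $\mathrm{row}_k \mapsto \mathrm{row}_k - \frac{x_1}{1-x_1\beta_k}\mathrm{row}_{k+1}$, which makes row $k$ supported only in columns $<k$, I would close by Laplace expansion along this sparse row together with an explicit pairwise cancellation of the resulting minors (verifiable directly in the small test case $\lambda=(2,2,2),\mu=(1,1,0),k=2$).
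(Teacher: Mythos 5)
Your handling of the two displayed recurrences is correct and is essentially the paper's own argument in a different packaging: the relation \( M_{k,j}-H_{k+1,j}=\beta_k H_{k,j} \) that you derive is just a rearrangement of the paper's key identity \( F_{k,j}=(1-x_{s_k}\beta_k)H_{k,j}-x_{s_k}H_{k+1,j} \), and performing the row subtraction inside \( \det(M) \) plays the role of the paper's appeal to the vanishing of a determinant with a repeated row. The case split \( r_j\le s_k \) versus \( r_j>s_k \), the use of Lemma~\ref{lem:h_m[Z]}, and the combinatorial argument for \( G \) all match the paper.

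The genuine gap is the ``moreover'' clause for \( \tG \). Your row operation kills row \( k \) only in columns \( j\ge k \) (the ratio \( x_1/(1-x_1\beta_k) \) requires \( m_j\ge 1 \), hence \( j\ge k \)); in columns \( j<k \) the surviving entries are \( H_{k,j}-\frac{x_1}{1-x_1\beta_k}H_{k+1,j}=\frac{1}{1-x_1\beta_k}F_{k,j} \), so the manipulation merely reproduces \( \det(F)/(1-x_1\beta_k) \), and the proposed ``explicit pairwise cancellation of the resulting minors,'' checked in one example, is not an argument---no cancellation mechanism among the cofactors is exhibited. The efficient route is the paper's: work directly with the matrix for \( \vs-\epsilon_k \), where \( s_k-1=0 \) and \( r_j\ge 1 \) force \( X_{[r_j,s_k-1]}=0 \), so each entry of row \( k \) collapses to the single term \( F_{k,j}=h_{-m_j}[Z_j] \) with \( Z_j=A_{\lambda_k}-A_{\mu_j}-B_{k-1}+B_j \); this vanishes for every \( j\ge k \) because there \( m_j\ge 1 \), and hence the whole row vanishes when \( k=1 \), which is the only case arising in the induction for Theorem~\ref{thm:row_f_G} (there \( s_{k-1}<s_k \) and \( \vs\in\PP^n \) force \( k=1 \) whenever \( s_k=1 \)). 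Be aware that the columns \( j<k \) are genuinely problematic for \( k\ge 2 \): for instance \( \lambda=(2,2,2,2) \), \( \mu=(1,1,0,0) \), \( k=3 \), \( \vr=\vs=(1,1,1,1) \) satisfy all hypotheses with \( s_k=1 \), yet \( F_{3,1}=h_{1}[\alpha_2-\beta_2]=\alpha_2-\beta_2\ne 0 \), so neither your minor-cancellation hope nor a blanket ``row \( k \) of \( F \) vanishes'' disposes of them; you must either supply a genuine argument for those columns or restrict the moreover clause to the case \( k=1 \) that the induction actually uses.
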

\begin{proof}
  
  Let \( T\in \MMSVT^{\row(\vr,\vs)}(\lm) \). Then 
  \[
    \max(T(k,\lambda_k))<\min(T(k+1,\lambda_k)) = \min(T(k+1,\lambda_{k+1})) \le
    s_{k+1} = s_k.
  \]
  Thus every integer in the \( k \)th row of \( T \) is at most
  \( s_k-1 \), which shows the first identity. As before, we have
  \( G_{\lm}^{\row(\vr,\vs-\epsilon_k)}(\vx;\va,\vb)=0 \) if
  \( s_k=1 \) by Remark~\ref{rem:s_k=1}.

  For the second identity let
  \begin{align*}
    H_{i,j} &= h_{\lambda_i-\mu_j-i+j} [X_{[r_j,s_i]}\ominus(A_{\lambda_i}-A_{\mu_j}-B_{i-1} + B_{j})],\\
    F_{i,j} &= h_{\lambda_i-\mu_j-i+j} [X_{[r_j,s_i-\chi(i=k)]}\ominus(A_{\lambda_i}-A_{\mu_j}-B_{i-1} + B_{j})].
  \end{align*}
 Dividing both sides by \( \prod_{i=1}^{n}\prod_{l=r_i}^{s_i}
  (1-\beta_i x_l) \), the second identity can be rewritten as
  \begin{equation}\label{eq:H=F}
\det(H_{i,j}) = \frac{1}{1-x_{s_k}\beta_k}\det(F_{i,j}).
  \end{equation}

  We claim that for all \( 1\le j\le n \),
  \begin{equation}\label{eq:F=H-H}
    F_{k,j}=(1-x_{s_k}\beta_k)H_{k,j}-x_{s_{k}}H_{k+1,j}.
  \end{equation}
  Since \( F_{i,j}=H_{i,j} \) for all \( 1\le i,j\le n \) with \( i\ne k \), by
  the linearity of a determinant in its \( k \)th row, the claim implies \eqref{eq:H=F}. To prove the claim, 
  we compare \( F_{k,j} \) and \( H_{k,j} \).
  Let \( m=\lambda_{k}-\mu _{j}-k+j \) and \( Z=A_{\lambda
    _{k}}-A_{\mu _{j}}-B_{k-1}+B_{j}\) for brevity.
  We consider the two cases \( r_j\le s_k\) and \( r_j> s_k\). 
  
  First, suppose \( r_j\le s_k\). By Lemma~\ref{lem:h_m[Z]}, we have
  \begin{align*}
    h_{m+\ell }\left[X_{[r_{j},s_{k}-1]}\right]
    &= h_{m+\ell }\left[X_{[r_{j},s_{k}]}\right] - x_{s_k}h_{m+\ell-1}\left[X_{[r_{j},s_{k}]}\right],\\
    h_{\ell}[Z]
    &= h_{\ell }[Z-\beta_{k}] +\beta_{k}h_{\ell-1}[Z].
  \end{align*}
  The claim then follows from similar computations as in the proof of
  Lemma~\ref{lem:rec2} with the above two equations
  since \( s_k=s_{k+1} \) and \( \lambda_k=\lambda_{k+1} \) easily lead to
  \( H_{k+1,j}=\sum_{\ell\ge 0} h_{m+\ell-1} [X_{[r_j, s_k]}] h_\ell[Z-\beta_k] \).

  Now suppose \( r_j>s_k\). By the same argument as in the proof of
  Lemma~\ref{lem:rec2}, we obtain \( H_{k,j}=F_{k,j}=0\) and also \(
  H_{k+1,j}=0\). This shows the claim \eqref{eq:F=H-H}.

  To complete the proof, it remains to show that
  \( \tG_{\lambda/\mu}^{\row(\vr,\vs-\epsilon_k)}(\vx;\va,\vb)=0 \) for the case
  \( s_k=1 \). In this case, since \( r_j>s_k-1=0 \), one can apply the same
  argument above to obtain \( F_{k,j}=0 \) for any \( j \), which gives
  \( \tG_{\lambda/\mu}^{\row(\vr,\vs-\epsilon_k)}(\vx;\va,\vb)=0 \).
\end{proof}

We are now ready to prove Theorem~\ref{thm:main_G_row}, which is restated as the
following theorem.

\begin{thm}\label{thm:row_f_G}
  Let \( \lambda,\mu\in\Par_n,\vr,\vs\in\PP^n \) with \( \mu\subseteq\lambda \).
If $r_i\le r_{i+1}$ and $s_i\le s_{i+1}$ whenever
$\mu_i<\lambda_{i+1}$ for \( 1\le i\le n-1 \), then
\[ 
  G_{\lambda/\mu}^{\row(\vr,\vs)}(\vx;\va,\vb)=\tG_{\lambda/\mu}^{\row(\vr,\vs)}(\vx;\va,\vb). 
\]
\end{thm}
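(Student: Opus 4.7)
The strategy is to show that $G_{\lm}^{\row(\vr,\vs)}$ and $\tG_{\lm}^{\row(\vr,\vs)}$ obey the same recurrences and the same initial conditions, and then conclude by induction on the pair $(n,\,|\lambda/\mu|+\sum_i s_i)$ in lexicographic order. All the required identities are essentially already in place: Lemmas~\ref{lem:G=1} and \ref{lem:rec4} handle base/vanishing values, Lemma~\ref{lem:rec1} gives a splitting recurrence that reduces $n$, and Lemmas~\ref{lem:rec2} and \ref{lem:rec3} give recurrences that reduce the inner measure by $1$. Since each of these five results is stated simultaneously for $G$ and $\tG$ with matching coefficients, the entire proof collapses to the problem of exhibiting, in every configuration, \emph{some} recurrence or base case that applies, together with some routine bookkeeping.

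The reduction proceeds in three stages. First, if some $k\in[n-1]$ satisfies $\mu_k\ge\lambda_{k+1}$, Lemma~\ref{lem:rec1} factors both sides into the same product of two analogous quantities, each on a skew shape with strictly fewer rows; the flag hypothesis restricts correctly to each piece because it is only imposed on pairs $(i,i+1)$ with $\mu_i<\lambda_{i+1}$. We may therefore assume $\mu_i<\lambda_{i+1}$ for all $1\le i\le n-1$, which combined with the flag hypothesis forces $\vr,\vs\in\RPar_n$ and (after separating out $\lambda=\mu$, handled by Lemma~\ref{lem:G=1}) also $\mu_i<\lambda_i$ for every $i\in[n]$. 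Second, if $r_k>s_k$ for some $k$, Lemma~\ref{lem:rec4} finishes with $G=\tG=0$. Third, if $r_k\le s_k$ for all $k$, I pick $k$ to be the \emph{largest} index with $s_{k-1}<s_k$, which exists because $s_0=0<s_1$; by maximality, $s_k=s_{k+1}=\cdots=s_n$. If $\lambda_k>\lambda_{k+1}$ (automatic when $k=n$) then all four hypotheses of Lemma~\ref{lem:rec2} hold, and the two-term recurrence reduces both sides to instances with $s_k\mapsto s_k-1$ (still a reverse partition thanks to $s_{k-1}<s_k$) or $\lambda_k\mapsto\lambda_k-1$ (still a partition containing $\mu$, by conditions (1) and (4) of the lemma). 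If instead $\lambda_k=\lambda_{k+1}$, then $k<n$ and $s_k=s_{k+1}$ by our choice of $k$, so Lemma~\ref{lem:rec3} applies and reduces to $\vs-\epsilon_k$. Boundary situations $s_k=1$ are absorbed by the ``moreover'' clauses of those lemmas.

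The principal obstacle is that Lemma~\ref{lem:rec2} demands a simultaneous $\vs$-jump and $\lambda$-drop at the same index, while Lemma~\ref{lem:rec3} demands the opposite pattern of constancy across $k,k+1$; it is not evident that the two lemmas jointly exhaust all configurations. The resolution is precisely the choice of $k$ as the last $\vs$-jump: this forces $\vs$ to be constant on the tail $\{k,\dots,n\}$, so whether $\lambda_k>\lambda_{k+1}$ or $\lambda_k=\lambda_{k+1}$, exactly one of the two lemmas fires. Once $k$ is fixed this way, the remaining work---checking that the flag hypothesis survives each reduction (the only delicate inequality, $(\vs-\epsilon_k)_{k-1}\le(\vs-\epsilon_k)_k$, reduces to $s_{k-1}<s_k$, which is our choice) and that the lexicographic measure strictly drops---is a routine case analysis.
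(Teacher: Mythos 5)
Your proposal is correct and follows essentially the same route as the paper's proof: split off the case $\mu_t\ge\lambda_{t+1}$ via Lemma~\ref{lem:rec1}, reduce to $\vr,\vs\in\RPar_n$ with $\mu_i<\lambda_i$ for all $i$, dispose of $\vr\not\le\vs$ by Lemma~\ref{lem:rec4}, and then choose $k$ as the last jump of $\vs$ (so $s_{k-1}<s_k=\cdots=s_n$) to make exactly one of Lemmas~\ref{lem:rec2} or \ref{lem:rec3} applicable. The only cosmetic difference is your lexicographic measure $(n,\,|\lm|+\sum_i s_i)$ versus the paper's single sum $N=n+|\lm|+\sum_i s_i$; both strictly decrease under every reduction, so either works.
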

\begin{proof}
  We proceed by strong induction on \( N:=n+|\lm|+\sum_{i=1}^n s_i\ge1 \), where
  it is unnecessary to check the base case.
  Suppose that the theorem holds for
  all integers less than \( N \). If \( \mu=\lambda \), then it follows from
  Lemma~\ref{lem:G=1}. Therefore we may assume \( \mu\ne\lambda \). We consider
  the following two cases.
  
  \textbf{Case 1:} Suppose that there is an integer \( 1\le t\le n-1 \)
  satisfying \( \mu_t\ge\lambda_{t+1} \). We set \( \gamma^{(1)} =
  (\gamma_1,\dots, \gamma_t) \) and \( \gamma^{(2)} =
  (\gamma_{t+1},\dots,\gamma_n) \) for each \( \gamma\in\{\lambda,\mu,\vr,\vs\}
  \). By Lemma \ref{lem:rec1}, the statement is separated into the cases \(
  (\lambda^{(i)},\mu^{(i)},\vr^{(i)},\vs^{(i)}) \) for \( i=1,2 \). Each case is
  then covered by the induction hypothesis.

  \textbf{Case 2:} Suppose that \( \mu_t<\lambda_{t+1} \) for all \( 1\le t\le
  n-1 \) (or \(n=1\)). Then by the assumption in the theorem we have \(
  \vr,\vs\in\RPar_n \). Moreover, since \( \lambda\in\Par_n \), we have \(
  \mu_i<\lambda_i \) for all \( 1\le i\le n \) (if \( n=1 \), this follows from
  the assumptions \( \mu\subseteq\lambda \) and \( \mu\ne\lambda \)). Since \(
  \vs\in\RPar_n \) (with \( s_0=0 \)), we can find a unique integer $1\le k\le
  n$ satisfying $s_{k-1}<s_k =\dots=s_n$. If \( s_k<r_k \), then the statement
  follows from Lemma~\ref{lem:rec4}. Thus we now assume \( s_k\ge r_k \).

  If $\lambda_{k}>\lambda_{k+1}$, then we apply Lemma~\ref{lem:rec2}.
  If \( \lambda_{k}=\lambda_{k+1} \), then \( k<n \) because otherwise
  we would have \( 0\le \mu_n<\lambda_n = \lambda_{n+1} =0 \), which is a
  contradiction. Therefore we can apply Lemma~\ref{lem:rec3}. In
  either case the statement reduces to some cases with a smaller
  \( N \), and therefore the induction hypothesis applies.

  The above two cases show that the statement holds for \( N \), and the theorem
  follows by induction.
\end{proof}

\subsection{Proof of Theorem~\ref{thm:main_G_col}: column-flagged Grothendieck polynomials}
\label{sec:proof-theor-refthm:m-2}

In this subsection we prove the Jacobi--Trudi-like identity for \( G_\lmc^{\col(\vr, \vs)}(\vx;\va,\vb)
\) in Theorem~\ref{thm:main_G_col}. For \( \lambda,\mu\in\Par_n \) and \(
\vr,\vs\in\NN^n \), let
\begin{equation}
  \label{eq:tGc}
  \tG_\lmc^{\col(\vr, \vs)}(\vx;\va,\vb) 
  = D \det\left(   e_{\lambda_i-\mu_j-i+j}
    [X_{[r_j,s_i]}\ominus(A_{i-1}-A_{j}-B_{\lambda_i} + B_{\mu_j})]\right)_{i,j=1}^n,
\end{equation}
where \( D=\prod_{i=1}^n\prod_{l=r_i}^{s_i}(1-\alpha_ix_l)^{-1} \). Then the
identity in Theorem~\ref{thm:main_G_col} is written as
\[ 
  G_\lmc^{\col(\vr, \vs)}(\vx;\va,\vb)=\tG_\lmc^{\col(\vr, \vs)}(\vx;\va,\vb).
\]
Since we prove this identity in an analogous way as in the previous subsection, most of the 
proofs in this subsection will be omitted and some additional explanations will be
given when necessary.

As before we need several lemmas.

\begin{lem}\label{lem:rec1c} Let \( \lambda,\mu\in\Par_n,\vr,\vs\in\NN^n \)
  with \( \mu\subseteq\lambda \) and suppose \( 1\le k\le n-1\) is an integer
  such that \( \mu_k\ge\lambda_{k+1} \). 
  If \( \phi_{\va} \) is the map defined by shifting \( \alpha_i \) to \( \alpha_{i+1} \), then
\begin{align*}
  G_\lmc^{\col(\vr,\vs)}(\vx;\va,\vb)
  &=G_{({\lambda}^{(1)})'/({\mu}^{(1)})'}^{\col({\vr}^{(1)},{\vs}^{(1)})}(\vx;\va,\vb)\cdot\phi_{\va}^k
    \left(G_{({\lambda}^{(2)})'/({\mu}^{(2)})'}^{\col({\vr}^{(2)},{\vs}^{(2)})}(\vx;\va,\vb)\right),\\
  \tG_\lmc^{\col(\vr,\vs)}(\vx;\va,\vb)
  &=\tG_{({\lambda}^{(1)})'/({\mu}^{(1)})'}^{\col({\vr}^{(1)},{\vs}^{(1)})}(\vx;\va,\vb)\cdot\phi_{\va}^k
    \left(\tG_{({\lambda}^{(2)})'/({\mu}^{(2)})'}^{\col({\vr}^{(2)},{\vs}^{(2)})}(\vx;\va,\vb)\right),
\end{align*}
where for a sequence \( \vt=(t_1,\ldots,t_n) \) we denote
$\vt^{(1)}=(t_1,\ldots,t_k)$ and $\vt^{(2)}=(t_{k+1},\ldots,t_n)$.
\end{lem}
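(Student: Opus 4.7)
The plan is to mirror the proof of Lemma~\ref{lem:rec1}, exchanging rows with columns, the shape $\lm$ with its transpose $\lmc$, and the $\beta$-shift with an $\alpha$-shift. Under the hypothesis $\mu_k\ge\lambda_{k+1}$, the shape $\lmc$ decomposes as a disjoint union of $(\lambda^{(1)})'/(\mu^{(1)})'$, occupying columns $1,\dots,k$ of $\lmc$, and $(\lambda^{(2)})'/(\mu^{(2)})'$, occupying columns $k+1,\dots,n$. The structural point that drives the first identity is that these two parts also have \emph{disjoint row ranges}: the first part lies in rows $>\mu_k\ge \lambda_{k+1}$ and the second in rows $\le \lambda_{k+1}$. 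Hence no horizontal or vertical adjacency constraint of a tableau in $\MMSVT^{\col(\vr,\vs)}(\lmc)$ links the two parts, and such a tableau restricts to an independent pair of tableaux on the two sub-shapes. The weight splits accordingly; the $\alpha_j$-factors from the second part use indices $j\in\{k+1,\dots,n\}$ and become $\alpha_{j+k}$ when the columns of that part are relabeled $1,\dots,n-k$, producing the $\phi_{\va}^k$. The $\beta_i$-factors from the second part keep their indices because the row indices of $(\lambda^{(2)})'/(\mu^{(2)})'$ already match the ones appearing in $\lmc$.

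For the second identity, the plan is to show that the matrix in \eqref{eq:tGc} is block upper triangular with diagonal blocks of sizes $k$ and $n-k$. For $(i,j)$ with $1\le j\le k<i\le n$ we have $\mu_j\ge\mu_k\ge\lambda_{k+1}\ge\lambda_i$ and $i>j$, so
\[
A_{i-1}-A_j-B_{\lambda_i}+B_{\mu_j}
=\alpha_{j+1}+\cdots+\alpha_{i-1}+\beta_{\lambda_i+1}+\cdots+\beta_{\mu_j}
\]
is a sum of exactly $N:=(i-1-j)+(\mu_j-\lambda_i)$ positive variables. Expanding the $(i,j)$-entry as $\sum_{a-b=n} e_a[X_{[r_j,s_i]}]\,e_b[A_{i-1}-A_j-B_{\lambda_i}+B_{\mu_j}]$ with $n=\lambda_i-\mu_j-i+j=-N-1$, we need $a\ge 0$ (so $b\ge -n=N+1$) and simultaneously $b\le N$, which is impossible, so the entry is zero. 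The top-left block then equals by inspection the matrix defining $\tG_{(\lambda^{(1)})'/(\mu^{(1)})'}^{\col(\vr^{(1)},\vs^{(1)})}(\vx;\va,\vb)$. After the index shift $i\mapsto i-k,\ j\mapsto j-k$, together with the identity $\phi_{\va}^k(A_{i'-1}-A_{j'})=A_{i'-1+k}-A_{j'+k}$, the bottom-right block equals $\phi_{\va}^k$ applied to the defining matrix of $\tG_{(\lambda^{(2)})'/(\mu^{(2)})'}^{\col(\vr^{(2)},\vs^{(2)})}(\vx;\va,\vb)$; the constant $D$ factors as $D^{(1)}\cdot\phi_{\va}^k(D^{(2)})$ in the analogous way, which completes the second identity.

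The main obstacle is the vanishing step for the lower-left block, which is the elementary-symmetric mirror of the $h$-vanishing used in Lemma~\ref{lem:rec1}. It is in fact slightly cleaner here, since the plethystic argument $A_{i-1}-A_j-B_{\lambda_i}+B_{\mu_j}$ is already a \emph{positive} sum of variables, so one applies $e_b[\,\cdot\,]=0$ for $b>N$ directly, without the sign flip used to convert $h$ into $e$ in the row-flagged case. Once this vanishing is recorded, the remainder of the argument is a straightforward transcription of the proof of Lemma~\ref{lem:rec1}, with care given only to the transposed-shape bookkeeping and the $\alpha/\beta$ swap.
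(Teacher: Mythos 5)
Your proposal is correct and takes essentially the same approach as the paper, whose proof of this lemma is literally just the remark that it is analogous to Lemma~\ref{lem:rec1}; your transcription of that analogy (disjoint row ranges of the two column blocks of \( \lmc \) for the combinatorial identity, and the vanishing of the lower-left block via \( e_b[Z]=0 \) when \( b \) exceeds the number \( (i-1-j)+(\mu_j-\lambda_i) \) of variables in \( Z \)) is accurate, including the observation that no sign flip is needed here because the plethystic argument is already a positive sum.
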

\begin{proof}
  It is similar to the proof of Lemma~\ref{lem:rec1}.
\end{proof}

The following two lemmas show that \(
G_{\lambda'/\mu'}^{\col(\vr,\vs)}(\vx;\va,\vb) \)
and \( \tG_{\lambda'/\mu'}^{\col(\vr,\vs)}(\vx;\va,\vb) \) have the same
initial conditions.

\begin{lem}\label{lem:G=1c}
  For \( \lambda\in\Par_n,\vr,\vs\in\NN^n \),
  \[
    G_{\lambda'/\lambda'}^{\col(\vr,\vs)}(\vx;\va,\vb) = 
    \tG_{\lambda'/\lambda'}^{\col(\vr,\vs)}(\vx;\va,\vb) = 1.
  \]
\end{lem}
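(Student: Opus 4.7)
The proof plan is to mirror the strategy used in Lemma~\ref{lem:G=1}, adapting it from the row-flagged to the column-flagged setting.

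For the first equality, I note that the skew shape \( \lambda'/\lambda' \) is empty, so \( \MMSVT^{\col(\vr,\vs)}(\lambda'/\lambda') \) consists of only the empty tableau. By the definition of the column-flagged polynomial in Definition~\ref{defn:Grow}, this immediately gives \( G_{\lambda'/\lambda'}^{\col(\vr,\vs)}(\vx;\va,\vb) = 1 \).

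For the second equality, I would set \( \mu = \lambda \) in \eqref{eq:tGc} and write \( E_{i,j} = e_{\lambda_i-\lambda_j-i+j}[X_{[r_j,s_i]}\ominus(A_{i-1}-A_{j}-B_{\lambda_i}+B_{\lambda_j})] \) so that \( \tG_{\lambda'/\lambda'}^{\col(\vr,\vs)}(\vx;\va,\vb) = D\det(E_{i,j})_{i,j=1}^n \), where \( D = \prod_{i=1}^n \prod_{l=r_i}^{s_i}(1-\alpha_i x_l)^{-1} \). Since \( \mu_k = \lambda_k \ge \lambda_{k+1} \) for every \( 1 \le k \le n-1 \), I may apply Lemma~\ref{lem:rec1c} iteratively (at each \( k \)) to factor the determinant into a product of its diagonal entries, yielding \( \det(E_{i,j})_{i,j=1}^n = \prod_{i=1}^n E_{i,i} \).

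It then remains to compute each diagonal entry. Using \( A_{i-1}-A_i = -\alpha_i \) and the definition of \( \ominus \),
\[
  E_{i,i} = e_0[X_{[r_i,s_i]} \ominus (-\alpha_i)] = \sum_{\ell \ge 0} e_\ell[X_{[r_i,s_i]}]\, e_\ell[-\alpha_i] = \sum_{\ell \ge 0} e_\ell[X_{[r_i,s_i]}](-\alpha_i)^\ell,
\]
where the last equality uses \( e_\ell[-\alpha_i] = (-1)^\ell h_\ell[\alpha_i] = (-\alpha_i)^\ell \). The elementary symmetric generating function then gives \( E_{i,i} = \prod_{l=r_i}^{s_i}(1-\alpha_i x_l) \), with the convention that this product is \( 1 \) when \( r_i > s_i \) (which is consistent with \( X_{[r_i,s_i]}=0 \) in that case). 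Multiplying over \( i \) produces exactly \( D^{-1} \), so \( \tG_{\lambda'/\lambda'}^{\col(\vr,\vs)}(\vx;\va,\vb) = D \cdot D^{-1} = 1 \).

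The only real subtlety is the plethystic diagonal computation, since \( e_0[Y \ominus Z] \) is not simply \( 1 \); everything else is a direct transcription of the row-flagged argument, with Lemma~\ref{lem:rec1c} playing the role of Lemma~\ref{lem:rec1}.
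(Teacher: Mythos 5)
Your proposal is correct and follows exactly the route the paper intends: the paper's proof of Lemma~\ref{lem:G=1c} simply says ``similar to the proof of Lemma~\ref{lem:G=1},'' and you have faithfully transcribed that argument to the column-flagged setting, including the correct diagonal computation \( e_0[X_{[r_i,s_i]}\ominus(-\alpha_i)]=\prod_{l=r_i}^{s_i}(1-\alpha_i x_l) \) and the \( r_i>s_i \) edge case.
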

\begin{proof}
  It is similar to the proof of Lemma~\ref{lem:G=1}.
\end{proof}

\begin{lem}\label{lem:rec4c}
Let \( \lambda,\mu\in\Par_n,\vr,\vs\in\PP^n \) with \( \mu\subseteq\lambda \).
Suppose \( r_i-\mu_i\le r_{i+1}-\mu_{i+1}, s_i-\lambda_i\le s_{i+1}-\lambda_{i+1}+1 \).  If \( s_k-\lambda_k+1<r_k-\mu_k \) and \( \mu_k<\lambda_k \) for some 
\( 1\le k\le n \), then
\[
  G_{\lmc}^{\col(\vr,\vs)}(\vx;\va,\vb)= \tG_{\lmc}^{\col(\vr,\vs)}(\vx;\va,\vb)=0.
\]
\end{lem}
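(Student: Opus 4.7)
The plan is to mimic the proof of Lemma~\ref{lem:rec4}, adapted to the column-flagged setting: I will first verify that \( \MMSVT^{\col(\vr,\vs)}(\lmc)=\emptyset \), so the combinatorial side vanishes, and then exhibit a sufficiently large zero block in the matrix defining \( \tG_{\lmc}^{\col(\vr,\vs)} \) in \eqref{eq:tGc} to force its determinant to be zero.

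For the combinatorial vanishing, I would look at column \( k \) of \( \lmc \), which consists of the \( \lambda_k-\mu_k>0 \) cells \( (\mu_k+1,k),\ldots,(\lambda_k,k) \). Any \( T\in\MMSVT^{\col(\vr,\vs)}(\lmc) \) assigns to each of these cells a nonempty multiset with all entries in \( [r_k,s_k] \), and the constraint \( \max T(i,k)<\min T(i+1,k) \) forces at least \( \lambda_k-\mu_k \) distinct integers to appear in \( [r_k,s_k] \). This gives \( s_k-r_k+1\ge\lambda_k-\mu_k \), equivalently \( s_k-\lambda_k+1\ge r_k-\mu_k \), contradicting the hypothesis. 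Hence \( G_{\lmc}^{\col(\vr,\vs)}(\vx;\va,\vb)=0 \).

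For the determinantal vanishing, I will show that the \( (i,j) \)-entry of the matrix in \eqref{eq:tGc} is zero whenever \( 1\le i\le k\le j\le n \). This produces a \( k\times(n-k+1) \) zero block in the top-right corner, and since \( k+(n-k+1)>n \) the first \( k \) rows are linearly dependent, forcing \( \det=0 \). To check the vanishing of a single such entry, I would expand
\[
e_m[X_{[r_j,s_i]}\ominus Z]=\sum_{a-b=m}e_a[X_{[r_j,s_i]}]\,e_b[Z],
\]
where \( m=\lambda_i-\mu_j-i+j\ge 1 \) (using \( \lambda_i\ge\lambda_k>\mu_k\ge\mu_j \) and \( j\ge i \)). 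Each nonvanishing term requires both \( a\ge m\ge 1 \) and \( a\le s_i-r_j+1 \) (the size of the alphabet in \( e_a[X_{[r_j,s_i]}] \)), so it suffices to prove \( m>s_i-r_j+1 \), i.e., \( (\lambda_i-s_i)+(r_j-\mu_j)+(j-i-1)>0 \).

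The main step---and the only place the flag monotonicity hypotheses enter---is to iterate them. From \( s_l-\lambda_l\le s_{l+1}-\lambda_{l+1}+1 \) summed over \( l=i,\ldots,k-1 \) I obtain \( \lambda_i-s_i\ge \lambda_k-s_k-(k-i) \), and from \( r_l-\mu_l\le r_{l+1}-\mu_{l+1} \) summed over \( l=k,\ldots,j-1 \) I get \( r_j-\mu_j\ge r_k-\mu_k \). Using the integer hypothesis \( (\lambda_k-s_k)+(r_k-\mu_k)\ge 2 \) (since \( s_k-\lambda_k+1<r_k-\mu_k \) is strict among integers), adding these bounds yields
\[
(\lambda_i-s_i)+(r_j-\mu_j)+(j-i-1)\ge(\lambda_k-s_k)+(r_k-\mu_k)+(j-k-1)\ge 2+(j-k-1)\ge 1,
\]
which is strictly positive. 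The main obstacle will simply be bookkeeping the two chains of iterated inequalities with the correct index ranges; otherwise the argument is elementary.
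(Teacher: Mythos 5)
Your proposal is correct and follows essentially the same route as the paper: the combinatorial side vanishes because column \( k \) would require \( \lambda_k-\mu_k \) distinct values in \( [r_k,s_k] \), and the determinant vanishes because the \( (i,j) \)-entries with \( 1\le i\le k\le j\le n \) are zero, which you establish via the same two chains of iterated flag inequalities (the paper writes them as a single chain \( s_i-\lambda_i\le s_k-\lambda_k+k-i<r_k-\mu_k+k-i-1\le r_j-\mu_j+j-i-1 \), equivalent to your summed version). No gaps.
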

\begin{proof}  
  We first show that \( G^{\col(\vr,\vs)}_{\lmc}(\vx;\va,\vb)=0 \).
  To see this, suppose \( T\in \MMSVT^{\col(\vr,\vs)}(\lmc) \). 
  The \( k \)th column of \( T \) consists of \( \lambda_k-\mu_k \) cells,
  so at least \( \lambda_k-\mu_k \) distinct integers appear in the column.
  But this is impossible since \( s_k-\lambda_k+1<r_k-\mu_k \).
  Thus \(\MMSVT^{\col(\vr, \vs)}(\lmc)=\emptyset\) and
  \( G_{\lmc}^{\col(\vr,\vs)}(\vx;\va,\vb)=0 \).
  
  For the second equality, we claim that for all \( 1\le
  i\le k \) and \( k\le j\le n \), the \( (i,j) \)-entry of the matrix in
  \eqref{eq:tGc} is \( 0, \) that is,
    \begin{equation}\label{eq:Eijc}
      \sum_{\ell\ge0 }e_{\lambda _{i}-\mu _{j}-i+j+\ell }\left[
        X_{[r_{j},s_{i}]}\right] e_{\ell }\left[ A_{i-1}-A_{j}-B_{\lambda_i} + B_{\mu_j}\right]=0.
    \end{equation}
    Observe that \( \lambda_i-\mu_j-i+j \ge \lambda_k-\mu_k>0 \). By the
    assumption \( \mu_m<\lambda_{m+1} \) for all \( m<n \) (combined with the
    assumption of the lemma), we have \( r_k-\mu_k\le r_j-\mu_j \)
    and \( s_i-\lambda_i\le s_k-\lambda_k+(k-i) \). Thus
    \[ 
      s_i-\lambda_i\le s_k -\lambda_k+k-i<r_k-\mu_k+k-i-1\le r_j-\mu_j+k-i-1\le r_j-\mu_j+j-i-1,
    \]
    and we obtain \( s_i-r_j+1< \lambda_i-\mu_j-i+j \). This implies 
    \( e_{\lambda_i-\mu_j-i+j+\ell}[X_{[r_j,s_i]}]=0\) for all \( \ell\ge0 \), because the number of variables
    is \(\max(0,s_i-r_j+1) \), which is smaller than \( \lambda_i-\mu_j-i+j+\ell \). 
    Therefore the claim \eqref{eq:Eijc} holds and we obtain \( \tG_{\lmc}^{\col(\vr,\vs)}(\vx;\va,\vb)=0 \).
\end{proof}

Recall that for \( \lambda\in \Par_n \) and \( \vs\in \RPar_n \)
we assume \( \lambda_{n+1}=0 \) and \( s_0=0 \).
The following two lemmas show that \(
G_{\lambda'/\mu'}^{\col(\vr,\vs)}(\vx;\va,\vb) \)
and \( \tG_{\lambda'/\mu'}^{\col(\vr,\vs)}(\vx;\va,\vb) \) have the same
recursions. 

\begin{lem}\label{lem:rec2c} 
  Let \( \lambda,\mu\in\Par_n,\vr,\vs\in \PP^n \). Suppose \( r_i-\mu_i\le r_{i+1}-\mu_{i+1}\) for all \( 1\le i \le n-1 \). Let \( 1\le
  k\le n \) be an integer satisfying the following conditions:
  \begin{enumerate}
  \item \( \mu_k<\lambda_k\),
  \item \( r_k-\mu_k\le s_k-\lambda_k+1 \),
  \item \( s_{k-1}-\lambda_{k-1}\le s_{k}-\lambda_{k} \) if \( k\ge2 \), and
  \item \( \lambda_k>\lambda_{k+1} \).
  \end{enumerate}
  Then
\begin{align*}
  G_\lmc^{\col(\vr,\vs)}(\vx;\va,\vb)
  &=\frac{1-x_{s_k}\beta_{\lambda_k}}{1-x_{s_k}\alpha_k}G_{\lmc}^{\col(\vr,\vs-\epsilon_k)}(\vx;\va,\vb)
    +\frac{x_{s_k}}{1-x_{s_k}\alpha_k}G_{(\lambda-\epsilon_k)'/\mu'}^{\col(\vr,\vs-\epsilon_k)}(\vx;\va,\vb),\\
  \tG_\lmc^{\col(\vr,\vs)}(\vx;\va,\vb)
  &=\frac{1-x_{s_k}\beta_{\lambda_k}}{1-x_{s_k}\alpha_k}\tG_{\lmc}^{\col(\vr,\vs-\epsilon_k)}(\vx;\va,\vb)
    +\frac{x_{s_k}}{1-x_{s_k}\alpha_k}\tG_{(\lambda-\epsilon_k)'/\mu'}^{\col(\vr,\vs-\epsilon_k)}(\vx;\va,\vb).
\end{align*}
Moreover, if \( s_k=1 \), then
\begin{align}
  \label{eq:Gcol1}
  G_{\lambda'/\mu'}^{\col(\vr,\vs-\epsilon_k)}(\vx;\va,\vb)
  &=0,\\
  \label{eq:Gcol2}
  \tG_{\lambda'/\mu'}^{\col(\vr,\vs-\epsilon_k)}(\vx;\va,\vb)
  &=0,\\
  \label{eq:Gcol3}
  G_{(\lambda-\epsilon_k)'/\mu'}^{\col(\vr,\vs-\epsilon_k)}(\vx;\va,\vb) 
 &= 
\begin{cases}
 0 & \mbox{if \( \lambda_k>\mu_k+1 \)},\\
 G_{(\lambda-\epsilon_k)'/\mu'}^{\col(\vr,\vs)}(\vx;\va,\vb) & \mbox{if \( \lambda_k=\mu_k+1 \),}
\end{cases}\\
  \label{eq:Gcol4}
  \tG_{(\lambda-\epsilon_k)'/\mu'}^{\col(\vr,\vs-\epsilon_k)}(\vx;\va,\vb) 
 &= 
\begin{cases}
 0 & \mbox{if \( \lambda_k>\mu_k+1 \)},\\
 \tG_{(\lambda-\epsilon_k)'/\mu'}^{\col(\vr,\vs)}(\vx;\va,\vb) & \mbox{if \( \lambda_k=\mu_k+1 \).}
\end{cases}
\end{align}
\end{lem}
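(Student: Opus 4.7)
The plan is to adapt the proof of Lemma~\ref{lem:rec2} to the column-flagged setting: partition the tableau sum according to the content of the bottom cell of column $k$ of $\lmc$, and reduce the determinant identity to a row-wise identity in row $k$ that can be verified using Lemma~\ref{lem:e_m[Z]}. For the combinatorial identity I would partition $\MMSVT^{\col(\vr,\vs)}(\lmc)$ into the set $S_1$ of tableaux whose bottom cell $(\lambda_k,k)$ contains some entry strictly less than $s_k$ and the set $S_2$ whose bottom cell consists only of copies of $s_k$. For $T\in S_1$, stripping all $s_k$'s from the bottom cell yields a $T'\in\MMSVT^{\col(\vr,\vs-\epsilon_k)}(\lmc)$; summing over the number $q\ge 0$ of stripped copies and over whether the first $s_k$ is marked (subsequent copies cannot be marked since then $a_{i-1}=a_i$) produces the factor $(1-x_{s_k}\beta_{\lambda_k})/(1-x_{s_k}\alpha_k)$, where the $\alpha_k$ comes from the column index $j=k$ and the $\beta_{\lambda_k}$ from the row index $i=\lambda_k$. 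For $T\in S_2$, deleting cell $(\lambda_k,k)$ outright gives a tableau on $(\lambda-\epsilon_k)'/\mu'$; strictness of the column-increase in $T$ forces the entries of the new bottom cell to be at most $s_k-1$, so the correct flag is $\vs-\epsilon_k$, and summing over $q\ge 1$ all-unmarked copies produces $x_{s_k}/(1-x_{s_k}\alpha_k)$.

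For the $\tG$ identity, after dividing the factor $(1-x_{s_k}\alpha_k)^{-1}$ out of $D$, the claim reduces to
\[
\det\bigl(E^{(0)}_{i,j}\bigr) = (1-x_{s_k}\beta_{\lambda_k})\det\bigl(E^{(1)}_{i,j}\bigr) + x_{s_k}\det\bigl(E^{(2)}_{i,j}\bigr),
\]
where $E^{(0)},E^{(1)},E^{(2)}$ are the three matrices from \eqref{eq:tGc} for $(\lm,\vs)$, $(\lm,\vs-\epsilon_k)$, and $((\lambda-\epsilon_k)'/\mu',\vs-\epsilon_k)$. Since these matrices agree outside row $k$, linearity of the determinant reduces the claim to the row-wise identity $E^{(0)}_{k,j}=(1-x_{s_k}\beta_{\lambda_k})E^{(1)}_{k,j}+x_{s_k}E^{(2)}_{k,j}$ for each $j$. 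When $r_j\le s_k$, I would establish this by applying Lemma~\ref{lem:e_m[Z]} to split $e_{m+\ell}[X_{[r_j,s_k]}]=e_{m+\ell}[X_{[r_j,s_k-1]}]+x_{s_k}e_{m+\ell-1}[X_{[r_j,s_k-1]}]$ and by expanding $e_\ell[Z+\beta_{\lambda_k}]=e_\ell[Z]+\beta_{\lambda_k}e_{\ell-1}[Z]$ with $Z=A_{k-1}-A_j-B_{\lambda_k}+B_{\mu_j}$; resummation then identifies $\sum_{\ell\ge 0}e_{m+\ell-1}[X_{[r_j,s_k-1]}]e_\ell[Z]$ with $E^{(2)}_{k,j}-\beta_{\lambda_k}E^{(1)}_{k,j}$, which yields the identity.

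The main obstacle, where the column case genuinely diverges from the row case, is the range $r_j>s_k$, in which $X_{[r_j,s_k]}=X_{[r_j,s_k-1]}=0$ and all three row-$k$ entries must vanish (not only $E^{(0)}$ and $E^{(1)}$ as in Lemma~\ref{lem:rec2}). Monotonicity of $r_i-\mu_i$ combined with hypothesis (2) rules out $j=k$, forces $\mu_j\ge\lambda_k$ when $j<k$, and gives $n:=\lambda_k-\mu_j-k+j\ge 2$ when $j>k$. In the first case $Z$ is a sum of positive variables of size $|Z|=(k-1-j)+(\mu_j-\lambda_k)=-n-1$, so $e_{-n}[Z]=0$ kills $E^{(0)}_{k,j}$ and $E^{(1)}_{k,j}$, while $|Z+\beta_{\lambda_k}|=-n<1-n$ annihilates $E^{(2)}_{k,j}=e_{1-n}[Z+\beta_{\lambda_k}]$. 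In the second case $E^{(0)}_{k,j}=E^{(1)}_{k,j}=0$ from $e_{n+\ell}[0]=0$ for all $\ell\ge 0$, and $E^{(2)}_{k,j}=e_{1-n}[\,\cdot\,]=0$ since $1-n\le -1$. Finally, the ``moreover'' clauses follow from the observation that $s_k=1$ together with hypotheses (1) and (2) forces $\lambda_k=\mu_k+1$ and $r_k=1$: the reduced flag $[r_k,0]$ is then unfillable for column $k$, making $\MMSVT^{\col(\vr,\vs-\epsilon_k)}(\lmc)=\emptyset$, and the dichotomy in \eqref{eq:Gcol3}--\eqref{eq:Gcol4} reflects that column $k$ of $(\lambda-\epsilon_k)'/\mu'$ is empty precisely when $\lambda_k=\mu_k+1$, in which case the $\vs-\epsilon_k$ flag becomes irrelevant. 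The corresponding $\tG$ identities follow from the same vanishing of the relevant row or column of the associated determinant matrix.
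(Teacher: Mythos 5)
Your plan follows the paper's proof almost step for step: the same partition of $\MMSVT^{\col(\vr,\vs)}(\lmc)$ into $S_1$ and $S_2$, the same three matrices $E,F,G$ reduced by row-linearity to the single-row identity $E_{k,j}=(1-x_{s_k}\beta_{\lambda_k})F_{k,j}+x_{s_k}G_{k,j}$, the same split into $r_j\le s_k$ (Lemma~\ref{lem:e_m[Z]} plus resummation) and $r_j>s_k$ (counting plethystic variables, with the subcases $j>k$ and $j<k$ handled exactly as in the paper). Those parts are correct. Two points are missing, however.

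First, in the $S_2$ branch of the combinatorial identity you only describe the stripping direction. For the factor $x_{s_k}/(1-x_{s_k}\alpha_k)$ to be exact you must also check that \emph{every} $T'\in\MMSVT^{\col(\vr,\vs-\epsilon_k)}((\lambda-\epsilon_k)'/\mu')$ with any number of $s_k$'s re-inserted at $(\lambda_k,k)$ is a legal tableau; the nontrivial constraint is the row condition with the left neighbour $(\lambda_k,k-1)$, whose maximal possible entry is $s_{k-1}-\lambda_{k-1}+\lambda_k$ by column-strictness. This is precisely where hypothesis (3), $s_{k-1}-\lambda_{k-1}\le s_k-\lambda_k$, is used, and your proposal never invokes it. (Hypothesis (4) is likewise what makes $(\lambda_k,k)$ a deletable corner with no right neighbour.)

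Second, and more seriously, your treatment of \eqref{eq:Gcol4} in the case $\lambda_k=\mu_k+1$ is wrong. As you correctly note, when $s_k=1$ the hypotheses force $\lambda_k=\mu_k+1$, so this is the \emph{only} subcase of \eqref{eq:Gcol4} that actually occurs — yet here no row of the matrix vanishes. The matrices for the flags $\vs$ and $\vs-\epsilon_k$ genuinely differ in row $k$ (e.g.\ the $(k,k)$-entries are $1-\alpha_kx_1$ versus $1$, compensated by the prefactor $D$), so "the same vanishing of the relevant row" does not apply. The paper proves this case by applying Lemma~\ref{lem:rec1c} twice to factor the determinant into blocks, identifying the middle $1\times1$ block with $\tG^{\col((r_k),(s_k-1))}_{((\lambda_k-1))'/((\mu_k))'}=1$ via Lemma~\ref{lem:G=1c}, and observing that the remaining blocks do not involve $s_k$; some argument of this kind is needed to close your proof.
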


\begin{proof}
  One can prove the first identity with the same argument as in the
  proof of Lemma~\ref{lem:rec2}. The only thing that needs a careful
  thought is whether adding any number of integers \( s_k \) in the
  cell \( (\lambda_k,k) \) of a tableau in
  \( \MMSVT^{\col(\vr, \vs-\epsilon_k)}(\lambda'/\mu') \) or
  \( \MMSVT^{\col(\vr, \vs-\epsilon_k)}((\lambda-\epsilon_k)'/\mu') \)
  always gives a tableau in
  \( \MMSVT^{\col(\vr, \vs)}(\lambda'/\mu') \). This is indeed true
  because \( s_k\ge r_k \) (which follows from \( \mu_k<\lambda_k\)
  and \( r_k-\mu_k\le s_k-\lambda_k+1 \)) and the maximum possible
  entry of \( (\lambda_k,k-1) \) in a tableau of
  \( \MMSVT^{\col(\vr, \vs)}(\lambda'/\mu') \) is
  \( s_{k-1}-\lambda_{k-1}+\lambda_k \le s_k \) (because of the
  assumption \( s_{k-1}-\lambda_{k-1}\le s_k-\lambda_k \)). Note that
  if \( k=1 \), we do not have to consider the cell
  \( (\lambda_k,k-1) \) because
  \( (\lambda_k,k-1)\not\in \lambda'/\mu' \). Moreover, if
  \( s_k=1 \), we obtain \eqref{eq:Gcol1} and \eqref{eq:Gcol3} by
  Remark~\ref{rem:s_k=1}. Note that if \( s_k=1 \) and
  \( \lambda_k=\mu_k+1 \), then
  \(
  G_{(\lambda-\epsilon_k)'/\mu'}^{\col(\vr,\vs-\epsilon_k)}(\vx;\va,\vb)
  = G_{(\lambda-\epsilon_k)'/\mu'}^{\col(\vr,\vs)}(\vx;\va,\vb) \)
  because there are no cells in column \( k \) of
  \( (\lambda-\epsilon_k)'/\mu' \), hence the bound condition for
  column \( k \) is irrelevant.

  For the second identity let
  \begin{align*}
    E_{i,j} &= e_{\lambda_i-\mu_j-i+j} [X_{[r_j,s_i]}\ominus(A_{i-1}-A_{j}-B_{\lambda_i} + B_{\mu_j})],\\
    F_{i,j} &= e_{\lambda_i-\mu_j-i+j} [X_{[r_j,s_i-\chi(i=k)]}\ominus(A_{i-1}-A_{j}-B_{\lambda_i} + B_{\mu_j})],\\
    G_{i,j} &= e_{\lambda_i-\mu_j-i+j-\chi(i=k)} [X_{[r_j,s_i-\chi(i=k)]}\ominus(A_{i-1}-A_{j}-B_{\lambda_i-\chi(i=k)} + B_{\mu_j})].
  \end{align*}
  Then it suffices to show that
\begin{equation}\label{eq:EFGc}
  E_{k,j}=(1-x_{s_k}\beta_{\lambda_k})F_{k,j}+x_{s_k} G_{k,j}
\end{equation}
for all \( 1\le j\le n\). In order to prove \eqref{eq:EFGc}, 
we consider the two cases  \( r_j\le s_k\) and \( r_j> s_k\). Let \( m=\lambda _{k}-\mu _{j}-k+j \) and \( Z=A_{k-1}-A_{j}-B_{\lambda_k} + B_{\mu_j}\).

First, suppose \( r_j\le s_k\). We use the same argument in the proof
of Lemma~\ref{lem:rec2}. Since
\( e_{m+\ell}[X_{[r_{j},s_{k}-1]}] =
e_{m+\ell}[X_{[r_{j},s_{k}]}-x_{s_k}] \), by Lemma~\ref{lem:e_m[Z]},
we have
\begin{align*}
  e_{m+\ell }\left[X_{[r_{j},s_{k}]}\right]
  &= e_{m+\ell }\left[X_{[r_{j},s_{k}-1]}\right] + x_{s_k}e_{m+\ell-1}\left[X_{[r_{j},s_{k}-1]}\right],\\
  e_{\ell}[Z]
  &= e_{\ell }[Z+\beta_{\lambda_k}] -\beta_{\lambda_k}e_{\ell-1}[Z].
\end{align*}
Thus
\begin{align*}
  E_{k,j}&=\sum_{\ell\ge0 }e_{m+\ell }\left[
    X_{[r_{j},s_{k}-1]}\right] e_{\ell }[Z]
  +x_{s_k} \sum_{\ell\ge0 }e_{m+\ell-1}\left[
    X_{[r_{j},s_{k}-1]}\right] e_{\ell }[Z]\\
         &= F_{k,j}+x_{s_k} \sum_{\ell\ge0 }e_{m+\ell-1}\left[
           X_{[r_{j},s_{k}-1]}\right] \left(e_{\ell }[Z+\beta_{\lambda_k}] -\beta_{\lambda_k}e_{\ell-1}[Z]\right)\\
         &= F_{k,j}+x_{s_k} G_{k,j} -x_{s_k}\beta_{\lambda_k} F_{k,j},
\end{align*}
as required.

 Suppose \( r_j>s_k\). Since \(
X_{[r_j,s_k]}=X_{[r_j,s_k-1]}=0 \), we obtain 
\[
  E_{k,j} = F_{k,j} = e_{m}[0\ominus Z] = e_{-m}[Z],  \qquad G_{k,j} = e_{1-m}[Z+\beta_{\lambda_k}]. 
\]
 By the assumption \( \mu_k<\lambda_k \) in the lemma,
we have \( j\ne k \) because otherwise 
\( s_k<r_j=r_k\le s_k +\mu_k-\lambda_k+1\le s_k\), which is impossible. If \( j> k \), 
then \( \mu_j\le\mu_k<\lambda_k \). Thus
\[ 
  1-m=(1+\mu_j-\lambda_k)+(k-j)\le 0+(-1)<0,
\]
which implies \( E_{k,j} = F_{k,j} = G_{k,j}=0 \). Now suppose \( j<k \).  Since 
 \( r_j-\mu_j\le r_k-\mu_k\le s_k-\lambda_k+1 \), we obtain \(s_k<r_j\le s_k+\mu_j-\lambda_k+1. \) 
 This gives \( \mu_j-\lambda_k\ge0 \) and hence
\[
  e_{-m}[Z] =  e_{-m} [A_{k-1}-A_{j}-B_{\lambda_k} + B_{\mu_j}]
  =e_{-m}(\alpha_{j+1},\dots,\alpha_{k-1},\beta_{\lambda_{k}+1},\dots,\beta_{\mu_j})=0,
\]
because the number of variables is \( k-1-j+\mu_j-\lambda_k=-m-1<-m \)
(which also implies \( -m>0 \)).
Therefore \( E_{k,j} = F_{k,j} = e_{-m}[Z]=0 \) and similarly we obtain \(
G_{k,j} = e_{1-m}[Z+\beta_{\lambda_k}]=0 \). This shows \eqref{eq:EFGc}.

To complete the proof, it remains to show
\eqref{eq:Gcol2} and \eqref{eq:Gcol4} for the case \( s_k=1 \). Since
\( r_j>s_k-1=0 \), \eqref{eq:Gcol2} and \eqref{eq:Gcol4} for the case
\( \lambda_k>\mu_k+1 \) can be shown by applying the same argument as
before. Suppose \( s_k=1 \) and \( \lambda_k=\mu_k+1 \).
Then, since \(  \mu_k = (\lambda-\epsilon_k)_k \), we have
\( \mu_{k-1}\ge \mu_k= (\lambda-\epsilon_k)_k \)
and
\( \mu_{k} = (\lambda-\epsilon_k)_k \ge (\lambda-\epsilon_k)_{k+1} \).
Therefore, by applying Lemma~\ref{lem:rec1c} twice, we obtain
\begin{multline*}
    \tG_{(\lambda-\epsilon_k)'/\mu'}^{\col(\vr,\vs-\epsilon_k)}(\vx;\va,\vb)
    =\tG_{({\lambda}^{(1)})'/({\mu}^{(1)})'}^{\col({\vr}^{(1)},{\vs}^{(1)})}(\vx;\va,\vb)\\
    \times
    \phi_{\va}^{k-1}
    \left( \tG_{((\lambda_k-1))'/((\mu_k))'}^{\col((r_k),(s_k-1))}(\vx;\va,\vb) \right)
    \phi_{\va}^{k}
    \left( \tG_{({\lambda}^{(2)})'/({\mu}^{(2)})'}^{\col({\vr}^{(2)},{\vs}^{(2)})}(\vx;\va,\vb) \right),
\end{multline*}
where \( \vr^{(1)} = (r_1,\dots,r_{k-1}) \) and
\( \vr^{(2)} = (r_{k+1},\dots,r_{n}) \), and \( \vs^{(1)} \),
\( \vs^{(2)} \), \( \lambda^{(1)} \), \( \lambda^{(2)} \),
\( \mu^{(1)} \), and \( \mu^{(2)} \) are defined similarly. Since
\( (\lambda_k-1)=(\mu_k) \) as partitions, by Lemma~\ref{lem:G=1c},
the above identity can be rewritten as
\begin{equation}\label{eq:3}
    \tG_{(\lambda-\epsilon_k)'/\mu'}^{\col(\vr,\vs-\epsilon_k)}(\vx;\va,\vb)
    =\tG_{({\lambda}^{(1)})'/({\mu}^{(1)})'}^{\col({\vr}^{(1)},{\vs}^{(1)})}(\vx;\va,\vb)
    \phi_{\va}^{k}
    \left( \tG_{({\lambda}^{(2)})'/({\mu}^{(2)})'}^{\col({\vr}^{(2)},{\vs}^{(2)})}(\vx;\va,\vb) \right).
\end{equation}
The same argument also shows that \(  \tG_{(\lambda-\epsilon_k)'/\mu'}^{\col(\vr,\vs-\epsilon_k)}(\vx;\va,\vb) \)
is equal to the right-hand side of \eqref{eq:3}. Therefore we obtain \eqref{eq:Gcol4}.
\end{proof}

\begin{lem}\label{lem:rec3c}
  Let \( \lambda,\mu\in\Par_n\) and \(\vr,\vs\in \PP^n \). Suppose \( r_i-\mu_i\le r_{i+1}-\mu_{i+1}\) for all \( 1\le i \le n-1 \). Let \( 1\le k\le n-1 \) be an integer
satisfying the following conditions:
\begin{enumerate}
\item \( \mu_k<\lambda_k\),
\item \( r_k-\mu_k\le s_k-\lambda_k+1 \),
\item \( s_{k}-1=s_{k+1} \), and
\item \( \lambda_k=\lambda_{k+1} \).
\end{enumerate}
Then
\begin{align*}
G_{\lmc}^{\col(\vr,\vs)}(\vx;\va,\vb)&=G_{\lmc}^{\col(\vr,\vs-\epsilon_k)}(\vx;\va,\vb),\\
\tG_{\lmc}^{\col(\vr,\vs)}(\vx;\va,\vb)&=\tG_{\lmc}^{\col(\vr,\vs-\epsilon_k)}(\vx;\va,\vb).
\end{align*}
Moreover, if \( s_k=1 \), then
\[
  G_{\lambda'/\mu'}^{\col(\vr,\vs-\epsilon_k)}(\vx;\va,\vb) =
  \tG_{\lambda'/\mu'}^{\col(\vr,\vs-\epsilon_k)}(\vx;\va,\vb) =0.
\]
\end{lem}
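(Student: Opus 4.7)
The plan is to mirror the proof of Lemma~\ref{lem:rec3}, adapting it to the column setting by using Lemma~\ref{lem:e_m[Z]} in place of Lemma~\ref{lem:h_m[Z]}. Combinatorially we will show that every $T\in\MMSVT^{\col(\vr,\vs)}(\lmc)$ is automatically bounded by $s_k-1$ in column $k$; algebraically we will prove a one-step row relation that, together with a row operation, produces the factor $(1-\alpha_k x_{s_k})$ needed to absorb the change in the prefactor $D$ under $\vs\mapsto\vs-\epsilon_k$.

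For the combinatorial identity, the assumptions $\mu_k<\lambda_k$, $\lambda_k=\lambda_{k+1}$, and $\mu_{k+1}\le\mu_k<\lambda_{k+1}$ guarantee that both $(\lambda_k,k)$ and $(\lambda_k,k+1)$ belong to $\lmc$. For any $T\in\MMSVT^{\col(\vr,\vs)}(\lmc)$, row-weak increase gives
\[
\max(T(\lambda_k,k))\le \min(T(\lambda_k,k+1))\le s_{k+1}=s_k-1,
\]
and then column-strict increase forces every entry of column $k$ to be $\le s_k-1$. Hence $\MMSVT^{\col(\vr,\vs)}(\lmc)=\MMSVT^{\col(\vr,\vs-\epsilon_k)}(\lmc)$. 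If $s_k=1$, the bound becomes $0$, so by Remark~\ref{rem:s_k=1} the set is empty and $G_{\lmc}^{\col(\vr,\vs-\epsilon_k)}=0$.

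For the determinantal identity, denote by $E_{i,j}$ and $F_{i,j}$ the $(i,j)$-entries of the matrix in \eqref{eq:tGc} at $(\vr,\vs)$ and $(\vr,\vs-\epsilon_k)$, so $F_{i,j}=E_{i,j}$ for $i\ne k$. Since the prefactor $D$ changes by $(1-\alpha_k x_{s_k})^{-1}$, it suffices to prove $\det(E)=(1-\alpha_k x_{s_k})\det(F)$, which by row linearity reduces to the key relation
\[
E_{k,j}=(1-x_{s_k}\alpha_k)F_{k,j}+x_{s_k}E_{k+1,j} \qquad \text{for all } 1\le j\le n,
\]
after subtracting $x_{s_k}$ times row $k+1$ from row $k$. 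Setting $m=\lambda_k-\mu_j-k+j$ and $Z=A_{k-1}-A_j-B_{\lambda_k}+B_{\mu_j}$, in the case $r_j\le s_k$ I would prove this by applying Lemma~\ref{lem:e_m[Z]} twice: first
$e_{m+\ell}[X_{[r_j,s_k]}] = e_{m+\ell}[X_{[r_j,s_k-1]}] + x_{s_k}e_{m+\ell-1}[X_{[r_j,s_k-1]}]$,
then $e_\ell[Z]=e_\ell[Z+\alpha_k]-\alpha_k e_{\ell-1}[Z]$, and use $\lambda_k=\lambda_{k+1}$ together with $s_{k+1}=s_k-1$ to identify $\sum_{\ell\ge 0} e_{m+\ell-1}[X_{[r_j,s_k-1]}]\,e_\ell[Z+\alpha_k]=E_{k+1,j}$.

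The main obstacle is the complementary case $r_j>s_k$, where $X_{[r_j,s_k]}=X_{[r_j,s_k-1]}=0$ and one must instead verify $E_{k,j}=F_{k,j}=E_{k+1,j}=0$ so the identity holds trivially. Here the hypothesis $r_i-\mu_i\le r_{i+1}-\mu_{i+1}$ has to be used to split into subcases: when $j>k$ one has $m\ge (\lambda_k-\mu_k)+(j-k)\ge 2$, so $e_{-m}[Z]=0$ and likewise $e_{1-m}[Z+\alpha_k]=0$; when $j<k$, combining $r_j>s_k$ with $r_j-\mu_j\le r_k-\mu_k\le s_k-\lambda_k+1$ forces $\mu_j\ge\lambda_k$, whence $Z$ is the sum of the $-m-1$ variables $\alpha_{j+1},\dots,\alpha_{k-1},\beta_{\lambda_k+1},\dots,\beta_{\mu_j}$, and $e_{-m}$ and $e_{1-m}$ vanish since their degrees exceed the variable count; finally $j=k$ is excluded because $r_k\le s_k$. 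The moreover clause for $s_k=1$ then follows from the same vanishing analysis applied to $(\vr,\vs-\epsilon_k)$, since then $r_j>s_k-1=0$ for every $j$, so every entry of row $k$ of the corresponding matrix is zero.
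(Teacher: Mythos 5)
Your proposal is correct and follows essentially the same route as the paper's proof: the identical combinatorial bound $\max(T(\lambda_k,k))\le\min(T(\lambda_{k+1},k+1))\le s_{k+1}=s_k-1$ for the first identity, and the same row relation $E_{k,j}=(1-x_{s_k}\alpha_k)F_{k,j}+x_{s_k}F_{k+1,j}$ (the paper writes $F_{k+1,j}$, which equals your $E_{k+1,j}$) proved via Lemma~\ref{lem:e_m[Z]} when $r_j\le s_k$ and via the same vanishing case analysis (borrowed from Lemma~\ref{lem:rec2c}) when $r_j>s_k$. The handling of the moreover clause also matches the paper's.
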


\begin{proof} 
  The proof is similar to that of Lemma~\ref{lem:rec2c}.
  Let \( T\in \MMSVT^{\col(\vr,\vs)}(\lambda'/\mu') \). Then 
  \[
    \max(T(\lambda_k,k))\le \min(T(\lambda_k,k+1)) = \min(T(\lambda_{k+1}, k+1)) \le
    s_{k+1} = s_k - 1.
  \]
  Thus every integer in the \( k \)th column of \( T \) is at most
  \( s_k-1 \), which shows the first identity. As before, we have
  \( G_{\lambda'/\mu'}^{\col(\vr,\vs-\epsilon_k)}(\vx;\va,\vb)=0 \) if
  \( s_k=1 \) by Remark~\ref{rem:s_k=1}.
  
  For the second identity let
    \begin{align*}
    E_{i,j} &= e_{\lambda_i-\mu_j-i+j} [X_{[r_j,s_i]}\ominus(A_{i-1}-A_{j}-B_{\lambda_i} + B_{\mu_j})],\\
    F_{i,j} &= e_{\lambda_i-\mu_j-i+j} [X_{[r_j,s_i-\chi(i=k)]}\ominus(A_{i-1}-A_{j}-B_{\lambda_i} + B_{\mu_j})].
  \end{align*}
  Then it suffices to show that 
  \begin{equation}\label{eq:EFc}
    E_{k,j}=(1-x_{s_k}\alpha_k)F_{k,j} + x_{s_{k}}F_{k+1,j}
  \end{equation}
  for all \( 1\le j \le n \). In order to prove \eqref{eq:EFc}, 
  we consider the two cases \( r_j\le s_k\) and \( r_j> s_k\). Let \( m=\lambda _{k}-\mu _{j}-k+j \) and \( Z=A_{k-1}-A_{j}-B_{\lambda_k} + B_{\mu_j}\).

  First, suppose \( r_j\le s_k\). By Lemma~\ref{lem:e_m[Z]},
  we have
  \begin{align*}
    e_{m+\ell}\left[ X_{[ r_j,s_k ]} \right] &=  e_{m+\ell}\left[ X_{[ r_j,s_k-1 ]} \right] + x_{s_k} e_{m+\ell-1}\left[ X_{[ r_j,s_k-1 ]} \right], \\
    e_{\ell}[Z] &=e_\ell[Z+\alpha_k] - \alpha_k e_{\ell-1}[Z].
  \end{align*}
  Thus
 \begin{align*}
  E_{k,j}&=\sum_{\ell\ge0 }e_{m+\ell }\left[
    X_{[r_{j},s_{k}-1]}\right] e_{\ell }[Z]
  +x_{s_k} \sum_{\ell\ge0 }e_{m+\ell-1}\left[
    X_{[r_{j},s_{k}-1]}\right] e_{\ell }[Z]\\
         &= F_{k,j}+x_{s_k} \sum_{\ell\ge0 }e_{m+\ell-1}\left[
           X_{[r_{j},s_{k}-1]}\right] \left(e_{\ell }[Z+\alpha_k] -\alpha_ke_{\ell-1}[Z]\right)\\
         &= F_{k,j}+x_{s_k} F_{k+1,j} -x_{s_k}\alpha_k F_{k,j},
\end{align*}
as required.

 Suppose \( r_j>s_k\). Since \(
X_{[r_j,s_k]}=X_{[r_j,s_k-1]}=0 \), we obtain 
\[
  E_{k,j} = F_{k,j} = e_{m}[0\ominus Z] = e_{-m}[Z],
  \qquad F_{k+1,j} = e_{m-1}[0\ominus Z] = e_{1-m}[Z].
\]
Then by the same argument as in the proof of Lemma~\ref{lem:rec2c}, we
obtain \( E_{k,j} = F_{k,j} = F_{k+1,j} =0 \), which shows
\eqref{eq:EFc}.

Finally, if \( s_k = 1 \), then \( r_j > s_k-1 = 0 \) and hence we
obtain \( F_{k,j} = 0 \) by the argument above. This shows
\( \tG_{\lambda'/\mu'}^{\col(\vr,\vs-\epsilon_k)}(\vx;\va,\vb) =0 \)
for \( s_k = 1 \), which completes the proof.
\end{proof}

We are ready to prove Theorem~\ref{thm:main_G_col}, which is restated as follows.

\begin{thm}
  Let \( \lambda,\mu\in\Par_n,\vr,\vs\in\PP^n \) with \( \mu\subseteq\lambda \).
If $r_i-\mu_i\le r_{i+1}-\mu_{i+1}$ and $s_i-\lambda_i\le s_{i+1}-\lambda_{i+1}+1$ whenever
$\mu_i<\lambda_{i+1}$ for \( 1\le i\le n-1 \), then
\[ 
  G_{\lmc}^{\col(\vr,\vs)}(\vx;\va,\vb)=\tG_{\lmc}^{\col(\vr,\vs)}(\vx;\va,\vb). 
\]
\end{thm}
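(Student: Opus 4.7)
The plan is to mirror the inductive argument for Theorem~\ref{thm:row_f_G}, since every column-flagged lemma needed has already been established (namely Lemmas~\ref{lem:rec1c}, \ref{lem:G=1c}, \ref{lem:rec4c}, \ref{lem:rec2c}, and \ref{lem:rec3c}). I would proceed by strong induction on \( N := n + |\lm| + \sum_{i=1}^{n} s_i \), showing that \( G_{\lmc}^{\col(\vr,\vs)}(\vx;\va,\vb) \) and \( \tG_{\lmc}^{\col(\vr,\vs)}(\vx;\va,\vb) \) satisfy identical recurrences and agree on their initial data.

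If \( \mu=\lambda \), Lemma~\ref{lem:G=1c} finishes immediately. Otherwise I split into two cases. If some \( 1 \le t \le n-1 \) has \( \mu_t \ge \lambda_{t+1} \), then Lemma~\ref{lem:rec1c} factors both sides into products indexed by \( [1,t] \) and \( [t+1,n] \), each with strictly smaller \( n \), and the induction hypothesis applies after a quick check that the hypothesis of the theorem restricts naturally to each block. Otherwise \( \mu_i < \lambda_{i+1} \) for all \( i<n \), which together with \( \mu \subsetneq \lambda \) forces \( \mu_i<\lambda_i \) for every \( i \), so both \( r_i-\mu_i \le r_{i+1}-\mu_{i+1} \) and \( s_i-\lambda_i \le s_{i+1}-\lambda_{i+1}+1 \) hold for all \( i<n \).

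In this second case, if some \( k \) violates \( r_k - \mu_k \le s_k - \lambda_k + 1 \), then Lemma~\ref{lem:rec4c} makes both sides vanish. Otherwise set \( T_i := s_i-\lambda_i \) and let \( k \) be the largest index with \( T_k = \max_i T_i \). If \( k=n \), then \( \lambda_{n+1}=0<\lambda_n \) and \( T_{k-1}\le T_k \) by maximality, so Lemma~\ref{lem:rec2c} applies. If \( k<n \), then \( T_k>T_{k+1} \), and combined with \( T_k\le T_{k+1}+1 \) we obtain \( T_k=T_{k+1}+1 \). When \( \lambda_k>\lambda_{k+1} \) this invokes Lemma~\ref{lem:rec2c}, and when \( \lambda_k=\lambda_{k+1} \) the equality \( T_k - T_{k+1}=1 \) becomes \( s_k = s_{k+1}+1 \), so Lemma~\ref{lem:rec3c} applies.

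The main technical obstacle is checking that the hypothesis of the theorem propagates to each smaller subproblem produced by the recurrences. For instance, after invoking Lemma~\ref{lem:rec2c} the replaced sequence \( \vs-\epsilon_k \) must still satisfy \( s_{k-1}-\lambda_{k-1}\le (s_k-1)-\lambda_k+1 = s_k-\lambda_k \), which is precisely the inequality \( T_{k-1}\le T_k \) guaranteed by taking \( k \) as a maximizer of \( T_i \); the other conditions are inherited because the only entries of \( \lambda \), \( \mu \), \( \vr \), \( \vs \) that change have been changed by matching amounts. The edge case \( s_k=1 \) requires separate care: a short inequality argument using \( T_k \ge T_{k+1} \) and \( \lambda_k \ge \lambda_{k+1}+1 \) shows it can only arise when \( k=n \), in which case the ``Moreover'' clauses of Lemma~\ref{lem:rec2c} reduce the identity either to outright vanishing (when \( \lambda_n>\mu_n+1 \)) or to a smaller-shape instance with \( \lambda_n=\mu_n+1 \) to which the induction hypothesis directly applies.
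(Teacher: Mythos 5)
Your proposal is correct and follows essentially the same inductive scheme as the paper's proof: strong induction on \( n+|\lm|+\sum_{i=1}^n s_i \), with Lemmas~\ref{lem:G=1c}, \ref{lem:rec1c}, \ref{lem:rec4c}, \ref{lem:rec2c}, and \ref{lem:rec3c} supplying the initial conditions and recurrences, and with the same bookkeeping for the \( s_k=1 \) edge case via the ``Moreover'' clauses. The only cosmetic difference is your choice of the pivot row \( k \) (the largest maximizer of \( s_i-\lambda_i \), rather than the paper's largest \( m \) with \( s_{m-1}-\lambda_{m-1}\le s_m-\lambda_m \)); both choices deliver exactly the two properties the argument uses, namely \( s_{k-1}-\lambda_{k-1}\le s_k-\lambda_k \) and \( s_k-\lambda_k=s_{k+1}-\lambda_{k+1}+1 \) when \( k<n \), so the proof goes through identically.
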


\begin{proof}
  The proof is similar to that of Theorem~\ref{thm:row_f_G}. We proceed by
  strong induction on \( N:=n+|\lm|+\sum_{i=1}^n s_i \). Suppose that the
  theorem holds for all integers less than \( N \). If \( \mu=\lambda \), then
  it follows from Lemma~\ref{lem:G=1c}. Therefore we may assume \( \mu\ne\lambda
  \). We consider the following two cases.
  
  \textbf{Case 1:} Suppose that there is an integer \( 1\le t\le n-1 \)
  satisfying \( \mu_t\ge\lambda_{t+1} \). We set \( \gamma^{(1)} =
  (\gamma_1,\dots, \gamma_t) \) and \( \gamma^{(2)} =
  (\gamma_{t+1},\dots,\gamma_n) \) for each \( \gamma\in\{\lambda,\mu,\vr,\vs\}
  \). By Lemma~\ref{lem:rec1c}, the statement is separated into the cases \(
  (\lambda^{(i)},\mu^{(i)},\vr^{(i)},\vs^{(i)}) \) for \( i=1,2 \). Each case is
  then covered by the induction hypothesis.

  \textbf{Case 2:} Suppose that \( \mu_t<\lambda_{t+1} \) for all
  \( 1\le t\le n-1 \) (or \(n=1\)). Then by the assumption, we have
  $r_i-\mu_i\le r_{i+1}-\mu_{i+1}$ and
  $s_i-\lambda_i\le s_{i+1}-\lambda_{i+1}+1$ for all \( 1\le i\le n-1 \).
  Note also that \( \mu_i<\lambda_i \) for all \( 1\le i\le n \)
  since \( \mu_i<\lambda_{i+1}\le \lambda_i \) for \( 1\le i\le n-1 \)
  and \( \mu_n\le \mu_{n-1}<\lambda_n \).

  Now let
  \[ 
    k=\max\left( \{1\}\cup \{2\le m\le n:s_{m-1}-\lambda_{m-1}\le s_m-\lambda_m\} \right).
  \]
  Observe that, by the definition of \( k \),
  we have \( s_{k-1}-\lambda_{k-1}\le s_k-\lambda_k \) if \( k\ge2 \).
  Moreover, since 
  \(s_i-\lambda_i\le s_{i+1}-\lambda_{i+1}+1\) for all \( 1\le i\le n-1 \),
  we have 
  \begin{equation}\label{eq:1}
    s_k-\lambda_k=s_{k+1}-\lambda_{k+1}+1 \qquad \mbox{if \( k < n \).}
  \end{equation}

  If \( s_k-\lambda_k+1<r_k-\mu_k \), then the statement holds by
  Lemma~\ref{lem:rec4c}. Hence we may assume
  \( r_k-\mu_k\le s_k-\lambda_k+1 \). If $\lambda_{k}>\lambda_{k+1}$,
  then we apply Lemma~\ref{lem:rec2c}. If
  \( \lambda_{k}=\lambda_{k+1} \), then \( k<n \) because otherwise
  \( 0\le \mu_n<\lambda_n = \lambda_{n+1} =0 \), which is a
  contradiction. In this case, by \eqref{eq:1}, we have
  \( s_k=s_{k+1}-\lambda_{k+1}+\lambda_k+1=s_{k+1}+1 \). Thus we can
  apply Lemma \ref{lem:rec3c}. In either case the statement reduces to
  some cases with a smaller \( N \). Thus, by the induction
  hypothesis, we obtain the statement.

  The above two cases show that the statement holds for \( N \), and the theorem
  follows by induction.
\end{proof}

\begin{remark}
  Computer experiments suggest that the condition \( r_i-\mu_i\le
  r_{i+1}-\mu_{i+1} \) in Theorem~\ref{thm:main_G_col} may be replaced by the
  weaker condition \( r_i-\mu_i\le r_{i+1}-\mu_{i+1}+1 \) as in a result of
  Wachs \cite[Theorem~3.5*]{Wachs_1985}. Under this weaker condition, the inequality  
  \( \mu_j-\lambda_k\ge0 \) does not hold when \( j<k \) in the last part of 
  the proof of Lemma~\ref{lem:rec2c}.
  Thus we must also consider the case \( \mu_j<\lambda_k \) when \( j<k \) 
  to complete the proof of Lemma~\ref{lem:rec2c} under the weaker condition. 
  All other lemmas in this subsection containing the condition 
  \( r_i-\mu_i\le r_{i+1}-\mu_{i+1} \) can be proved similarly under the weaker condition 
  \( r_i-\mu_i\le r_{i+1}-\mu_{i+1}+1 \).
\end{remark}

\subsection{Matsumura's flagged Grothendieck polynomials}
\label{sec:mats-flagg-groth}

Matsumura \cite{Matsumura_2018} found a Jacobi--Trudi-like formula for flagged
Grothendieck polynomials. In this subsection we show that our Jacobi--Trudi-like formula
implies his result.

Following the notation in \cite{Matsumura_2018}, 
for \( \lambda,\mu\in\Par_n \) and \( f=(f_1,\dots,f_n), g=(g_1,\dots,g_n)\in \PP^n \),
let
\[
  G_{\lambda/\mu,f/g}(\vx): = \sum_{T\in \FSVT(\lambda/\mu,f/g)} \beta^{|T|-|\lm|} \vx^T,
\]
where $\FSVT(\lambda/\mu,f/g)$ is the set of set-valued tableaux $T$ of shape
$\lm$ such that every integer appearing in row $i$ is in
$\{g_i,g_i+1,\dots,f_i\}$,
and \( |T| \) is the total number of integers in \( T \).

We define $G^{[p/q]}_m(\vx)$ by the following Laurent series in \( u \):
\begin{equation}\label{eq:Gp/q}
  \sum_{m\in\ZZ} G^{[p/q]}_m(\vx) u^m = \frac{1}{1+\beta u^{-1}} \prod_{q\le i\le p}
  \frac{1+\beta x_i}{1-x_iu},
\end{equation}
where
\[
  \frac{1}{1+\beta u^{-1}} =\frac{1}{1-(-\beta u^{-1})} = 1+(-\beta u^{-1})+(-\beta u^{-1})^2+\cdots,
\]
and
\[
  \frac{1+\beta x_i}{1-x_iu} = (1+\beta x_i)(1 + x_i u + x_i^2 u^2 + \cdots).
\]
If \( p<q \), then the product in \eqref{eq:Gp/q} is defined to be \( 1 \). We
note that the definition of $G^{[p/q]}_m(\vx)$ in \cite[p.224]{Matsumura_2018} has a
typo, $\beta^{-1} u$, which should be $\beta u^{-1}$.

We now state Matsumura's theorem.

\begin{thm}\cite[Theorem~4]{Matsumura_2018}
\label{thm:Matsumura}
Let \( \lambda,\mu\in\Par_n \) and \( f=(f_1,\dots,f_n), g=(g_1,\dots,g_n)\in \PP^n \) with \(\mu\subseteq\lambda\) and \(g_i\le g_{i+1}, f_i\le f_{i+1}\) whenever \(\mu_i<\lambda_{i+1}\).
We have
  \begin{equation}
    \label{eq:Matsumura}
  G_{\lambda/\mu,f/g}(\vx) =
  \det \left( \sum_{s\ge0} \binom{i-j}{s} \beta^s G^{[f_i/g_j]}_{\lambda_i-\mu_j-i+j+s}(\vx) \right)_{i,j=1}^{\ell(\lambda)}.
  \end{equation}
\end{thm}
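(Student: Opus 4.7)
The plan is to derive Matsumura's identity \eqref{eq:Matsumura} from our Jacobi--Trudi-like formula \eqref{eq:G_row} via the specialization recorded in the table of Section~\ref{sec:introduction}, namely $\vr=g$, $\vs=f$, $\va=\bm{0}$, and $\vb=-\vb_0=(-\beta,-\beta,\dots)$. The flag hypothesis in Theorem~\ref{thm:Matsumura} coincides with that of Theorem~\ref{thm:main_G_row}, so \eqref{eq:G_row} applies. First I would verify that under this specialization $G^{\row(g,f)}_{\lm}(\vx;\bm{0},-\vb_0) = G_{\lm,f/g}(\vx)$: the factor $\alpha_j^{\unmarked(T(i,j))-1}$ in $\wt(T)$ vanishes unless every cell of $T$ contains a single unmarked integer, which forces each multiset to be strictly increasing (hence a set), and the remaining factor $\prod_{(i,j)}(-\beta_i)^{\marked(T(i,j))}$ collapses to $\beta^{|T|-|\lm|}$, recovering Matsumura's weight.

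The core step is to match the entries of the two determinants. Under the specialization, the generating function identity
\begin{equation*}
  \sum_{b\ge 0} h_b[-B_{i-1}+B_j]\,u^b = \prod_{l=1}^{i-1}(1-\beta_l u)\prod_{l=1}^{j}\frac{1}{1-\beta_l u} = (1+\beta u)^{i-1-j}
\end{equation*}
yields $h_b[-B_{i-1}+B_j]=\binom{i-1-j}{b}\beta^b$, using the standard extended binomial for $i-1-j<0$. Combining with $\sum_a h_a[X_{[g_j,f_i]}]u^a=\prod_{l=g_j}^{f_i}(1-x_l u)^{-1}$ and expanding the $\ominus$, a coefficient extraction in $u$ produces
\begin{equation*}
  h_{\lambda_i-\mu_j-i+j}\bigl[X_{[g_j,f_i]}\ominus(-B_{i-1}+B_j)\bigr] = [u^{\lambda_i-\mu_j-i+j}]\,(1+\beta u^{-1})^{i-1-j}\prod_{l=g_j}^{f_i}\frac{1}{1-x_l u}.
\end{equation*}
On the other hand, multiplying \eqref{eq:Gp/q} by $(1+\beta u^{-1})^{i-j}$ and extracting the same coefficient gives
\begin{equation*}
  \sum_{s\ge 0}\binom{i-j}{s}\beta^s G^{[f_i/g_j]}_{\lambda_i-\mu_j-i+j+s}(\vx) = \prod_{l=g_j}^{f_i}(1+\beta x_l)\cdot h_{\lambda_i-\mu_j-i+j}\bigl[X_{[g_j,f_i]}\ominus(-B_{i-1}+B_j)\bigr],
\end{equation*}
since $(1+\beta u^{-1})^{i-j}\cdot(1+\beta u^{-1})^{-1}=(1+\beta u^{-1})^{i-1-j}$.

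To conclude I would reconcile the scalar prefactors: writing $\prod_{l=g_j}^{f_i}(1+\beta x_l) = \prod_{l=1}^{f_i}(1+\beta x_l) / \prod_{l=1}^{g_j-1}(1+\beta x_l)$ exhibits it as a row factor (depending only on $i$) divided by a column factor (depending only on $j$), both of which pull out of the determinant and produce the scalar $\prod_i\prod_{l=1}^{f_i}(1+\beta x_l) / \prod_j\prod_{l=1}^{g_j-1}(1+\beta x_l)$; relabeling the column index $j$ as $i$ in the denominator, this telescopes to $\prod_i\prod_{l=g_i}^{f_i}(1+\beta x_l)=C$, which is exactly the prefactor in \eqref{eq:G_row}. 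Combining the pieces then identifies the right-hand side of \eqref{eq:Matsumura} with $G_{\lm,f/g}(\vx)$. The main subtlety I anticipate is not the algebra, which is essentially bookkeeping with generating functions, but the preliminary step of justifying the specialization from $\MMSVT$ to ordinary flagged set-valued tableaux carrying the correct power of $\beta$; once this is done, the remaining identities follow by direct computation.
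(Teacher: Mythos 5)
Your proposal is correct and follows essentially the same route as the paper: reduce to the specialization \( \vr=g \), \( \vs=f \), \( \va=\bm{0} \), \( \vb=(-\beta,-\beta,\dots) \) of Theorem~\ref{thm:main_G_row}, after rewriting each entry \( \sum_{s\ge0}\binom{i-j}{s}\beta^s G^{[f_i/g_j]}_{\lambda_i-\mu_j-i+j+s}(\vx) \) as \( \prod_{l=g_j}^{f_i}(1+\beta x_l)\, h_{\lambda_i-\mu_j-i+j}[X_{[g_j,f_i]}\ominus(j-i+1)\overline{\beta}] \). Your Laurent-series manipulation of \eqref{eq:Gp/q} (multiplying by \( (1+\beta u^{-1})^{i-j} \) and extracting a coefficient) is just a repackaging of the paper's Lemmas~\ref{lem:Gp/q} and \ref{lem:mat-ominus}, and your explicit checks of the tableau-weight specialization and of the row/column factorization of the prefactor fill in steps the paper leaves implicit.
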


In what follows we reformulate Theorem~\ref{thm:Matsumura} using our notation \( \ominus \).
Recall that \( h_n(\vx)=0 \) for \( n<0 \).

\begin{lem}\label{lem:Gp/q}
  For any integer \( m\) and any positive integers \( p \) and \( q \), we have
  \[
    G^{[p/q]}_m(\vx) = \prod_{i=q}^p (1+\beta x_i) \sum_{k\ge0}(-\beta)^k h_{m+k}[X_{[q,p]}].
  \] 
\end{lem}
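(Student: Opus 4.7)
The plan is to verify the identity by a direct generating-function expansion: I will expand each factor of the right-hand side of \eqref{eq:Gp/q} into its power series in $u$ and then read off the coefficient of $u^m$.

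First, I will pull the constant factor $\prod_{i=q}^{p}(1+\beta x_i)$ (which does not depend on $u$) outside the sum in $m$, so that the generating function becomes
\[
\sum_{m\in\ZZ} G^{[p/q]}_m(\vx)\,u^m \;=\; \prod_{i=q}^{p}(1+\beta x_i)\cdot \frac{1}{1+\beta u^{-1}}\cdot \prod_{i=q}^{p}\frac{1}{1-x_i u}.
\]
Next, I will expand the two remaining factors as geometric series. Using
\[
\frac{1}{1+\beta u^{-1}}=\sum_{k\ge 0}(-\beta)^k u^{-k},
\qquad
\prod_{i=q}^{p}\frac{1}{1-x_i u}=\sum_{n\ge 0} h_n[X_{[q,p]}]\,u^n,
\]
where the second identity is the standard generating function for the complete homogeneous symmetric functions (and holds also when $p<q$, both sides then being~$1$), I will multiply the two series and collect terms of degree $m$ in $u$ by setting $n-k=m$, i.e.~$n=m+k$.

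After extracting the coefficient of $u^m$, the identity becomes
\[
G^{[p/q]}_m(\vx) \;=\; \prod_{i=q}^{p}(1+\beta x_i)\sum_{k\ge 0}(-\beta)^k\, h_{m+k}[X_{[q,p]}],
\]
which is the claim. The convention $h_n(\vx)=0$ for $n<0$ ensures that the sum over $k\ge 0$ truncates correctly when $m$ is negative, so no separate argument is required for negative $m$. I do not expect any real obstacle: the only thing to check carefully is that all three series converge as formal Laurent series in $u$ (the factor $1/(1+\beta u^{-1})$ contains only nonpositive powers of $u$ while $\prod (1-x_iu)^{-1}$ contains only nonnegative powers), so that their product is a well-defined formal Laurent series whose coefficients are the stated $G^{[p/q]}_m(\vx)$.
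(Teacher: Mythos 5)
Your proposal is correct and follows essentially the same route as the paper: expand $1/(1+\beta u^{-1})$ as $\sum_{k\ge0}(-\beta)^k u^{-k}$, expand $\prod_{i=q}^p(1-x_iu)^{-1}$ as the generating function $\sum_{l\ge0}h_l[X_{[q,p]}]u^l$, multiply, and extract the coefficient of $u^m$. Your added remarks on the convention $h_n=0$ for $n<0$ and on the well-definedness of the product of the two Laurent series are consistent with (and slightly more explicit than) the paper's argument.
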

\begin{proof}
  Since \( h_n(\vx)=0 \) for \( n<0 \), we have
  \[
    \prod_{q\le i\le p}\frac{1}{1-x_iu}=\sum_{l \geq 0} h_l[X_{[q,p]}]u^l 
    =\sum_{l\in \ZZ} h_l[X_{[q,p]}]u^l.
  \]
Thus the right hand side of \eqref{eq:Gp/q} is 
\begin{align*}
  \frac{1}{1+\beta u^{-1}} \prod_{q\le i\le p}\frac{1+\beta x_i}{1-x_iu}
  &=\prod_{i=q}^p (1+\beta x_i) \sum_{k\geq 0}(-\beta)^k u^{-k} \sum_{l\in\ZZ} h_l[X_{[q,p]}]u^l\\
  &=\prod_{i=q}^p (1+\beta x_i) \sum_{m\in\ZZ} \left( \sum_{k\geq 0}(-\beta)^k h_{m+k}[X_{[q,p]}] \right)u^m,
\end{align*}
which shows the lemma.
\end{proof}

Using Lemma~\ref{lem:Gp/q}, the $(i,j)$-entry of the matrix in
\eqref{eq:Matsumura} can be rewritten as follows.
Here, to avoid confusion, we write \( \overline{\beta} = -\beta \)
as a plethystic variable.

\begin{lem}\label{lem:mat-ominus}
  We have
 \[
   \sum_{s\ge0} \binom{i-j}{s} \beta^s G^{[f_i/g_j]}_{\lambda_i-\mu_j-i+j+s}(\vx)
   =C_{i,j} h_{\lambda_i-\mu_j-i+j}[X_{[g_j,f_i]}\ominus (j-i+1)\overline{\beta}],
 \]
 where $C_{i,j} = \prod_{l=g_j}^{f_i} (1+\beta x_l)$.
\end{lem}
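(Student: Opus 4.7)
The plan is to expand both sides until each becomes a single sum over $b\ge 0$ whose coefficient of $h_{\lambda_i-\mu_j-i+j+b}[X_{[g_j,f_i]}]$ depends only on the combination $(-\beta)^b$ times a binomial. The binomial identity needed is the partial alternating sum, and the only subtlety is interpreting the plethystic symbol $(j-i+1)\overline{\beta}$ when $j-i+1\le 0$.

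First, I would apply Lemma~\ref{lem:Gp/q} to rewrite $G^{[f_i/g_j]}_m(\vx) = C_{i,j}\sum_{k\ge 0}(-\beta)^k h_{m+k}[X_{[g_j,f_i]}]$ and pull $C_{i,j}$ out of the sum. Reindexing the resulting double sum by $b=s+k$ turns the left-hand side into
\[
  C_{i,j}\sum_{b\ge 0}h_{\lambda_i-\mu_j-i+j+b}[X_{[g_j,f_i]}]\,(-\beta)^b\sum_{s=0}^{b}\binom{i-j}{s}(-1)^s.
\]
Next, I would evaluate the inner sum via the standard identity $\sum_{s=0}^{b}\binom{n}{s}(-1)^s=(-1)^b\binom{n-1}{b}$ (easily proved by induction on $b$ using Pascal's rule) and upper negation $(-1)^b\binom{n-1}{b}=\binom{b-n}{b}$ to obtain
\[
  \sum_{s=0}^{b}\binom{i-j}{s}(-1)^s=\binom{j-i+b}{b}.
\]

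On the right-hand side, I would unfold the definition of $\ominus$ to get
\[
  C_{i,j}\,h_{\lambda_i-\mu_j-i+j}[X_{[g_j,f_i]}\ominus (j-i+1)\overline{\beta}]
  =C_{i,j}\sum_{b\ge 0}h_{\lambda_i-\mu_j-i+j+b}[X_{[g_j,f_i]}]\,h_b[(j-i+1)\overline{\beta}],
\]
and match the two sides term-by-term. The remaining equality to verify is $h_b[(j-i+1)\overline{\beta}]=\binom{j-i+b}{b}(-\beta)^b$. When $m:=j-i+1\ge 1$, this is the well-known formula $h_b[m z]=\binom{m+b-1}{b}z^b$ with $z=\overline{\beta}=-\beta$. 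When $m\le 0$, writing $n=-m=i-j-1\ge 0$ and applying the plethystic rule $h_b[-Y]=(-1)^b e_b[Y]$ gives $(-1)^b\binom{n}{b}(-\beta)^b$, and an application of upper negation shows this also equals $\binom{j-i+b}{b}(-\beta)^b$.

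The main (very minor) obstacle is the bookkeeping for $m\le 0$: one has to remember that $\overline{\beta}$ is treated as a plethystic variable and that the formula $h_b[mz]=\binom{m+b-1}{b}z^b$ only applies for $m\ge 0$, so the negative case must be handled separately via $h_b[-Y]=(-1)^b e_b[Y]$. Once the two expressions for $h_b[(j-i+1)\overline{\beta}]$ are seen to agree in both sign regimes, the two sides of the lemma coincide coefficient-by-coefficient in $b$, completing the proof.
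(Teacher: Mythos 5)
Your proposal is correct and follows essentially the same route as the paper's proof: apply Lemma~\ref{lem:Gp/q}, reindex the double sum, evaluate the inner alternating partial sum of binomial coefficients, and identify the resulting coefficient with $h_b[(j-i+1)\overline{\beta}]$ by a case split on the sign of $j-i+1$ (the paper writes the coefficient as $\beta^m\binom{i-j-1}{m}$, which equals your $(-\beta)^b\binom{j-i+b}{b}$ by upper negation, and splits into the cases $i>j$ and $i\le j$ in exactly the same way).
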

\begin{proof}
  By Lemma~\ref{lem:Gp/q}, we have
  \begin{align}
    \notag
    \sum_{s\ge0} \binom{i-j}{s} \beta^s G^{[f_i/g_j]}_{\lambda_i-\mu_j-i+j+s}(\vx)
    &= C_{i,j} \sum_{s\ge0} \binom{i-j}{s} \beta^s
      \sum_{k\ge0}(-\beta)^k h_{\lambda_i-\mu_j-i+j+s+k}[X_{[g_j,f_i]}]\\
    \notag
    &= C_{i,j} \sum_{m\ge0} \beta^m h_{\lambda_i-\mu_j-i+j+m}[X_{[g_j,f_i]}]
      \sum_{s=0}^m \binom{i-j}{s} (-1)^{m-s} \\
    \label{eq:hbinom}
    &= C_{i,j} \sum_{m\ge0} \beta^m h_{\lambda_i-\mu_j-i+j+m}[X_{[g_j,f_i]}]
     \binom{i-j-1}{m},
  \end{align}
  where the identity $\sum_{s=0}^m \binom{n}{s}(-1)^{m-s} = \binom{n-1}{m}$ is used.
  We claim that
  \begin{equation}\label{eq:h_m[-b]}
    \beta^m  \binom{i-j-1}{m} = h_m[(j-i+1)\overline{\beta}].
  \end{equation}
  Note that, by \eqref{eq:h_m[-b]}, the right side of \eqref{eq:hbinom} is equal to
  \[
    C_{i,j} \sum_{m\ge0} h_{\lambda_i-\mu_j-i+j+m}[X_{[g_j,f_i]}]
    h_m[(j-i+1)\overline{\beta}]
    = C_{i,j} h_{\lambda_i-\mu_j-i+j}[X_{[g_j,f_i]}\ominus (j-i+1)\overline{\beta}],
  \]
  which implies the lemma. Thus it remains to prove \eqref{eq:h_m[-b]}.

  If \( i>j \), then
  \[
    \beta^m  \binom{i-j-1}{m}
    = (-1)^{m} (-\beta)^m \binom{i-j-1}{m} =
      (-1)^m e_m[(i-j-1)\overline{\beta}]
    = h_m[(j-i+1)\overline{\beta}].
  \]
  Now suppose \( i\le j \). Then, by the identity
  \( (-1)^{m}\binom{-n}{m} = \multinom{n}{m} := \binom{n+m-1}{m}\), we
  have
  \[
    \beta^m  \binom{i-j-1}{m} =
    (-\beta)^m  (-1)^{m} \binom{i-j-1}{m} =
    (-\beta)^m  \multinom{j-i+1}{m} = h_m[(j-i+1)\overline{\beta}].
  \]
  Hence we always have \eqref{eq:h_m[-b]} and the proof is completed.
\end{proof}

Using Lemma~\ref{lem:mat-ominus} we restate Theorem~\ref{thm:Matsumura} as
follows.

\begin{thm} [Matsumura's theorem reformulated]
  Let \( \lambda,\mu\in\Par_n \) and \( f=(f_1,\dots,f_n), g=(g_1,\dots,g_n)\in \PP^n \) with \(\mu\subseteq\lambda\) and \(g_i\le g_{i+1}, f_i\le f_{i+1}\) whenever \(\mu_i<\lambda_{i+1}\).
  We have
  \begin{equation}
    \label{eq:Matsumura2}
  G_{\lambda/\mu,f/g}(\vx) = C_{\ell(\lambda)} 
  \det \left(h_{\lambda_i-\mu_j-i+j}[X_{[g_j,f_i]}\ominus (j-i+1)\overline{\beta}] \right)_{i,j=1}^{\ell(\lambda)},
  \end{equation}
  where \( \overline{\beta} = -\beta \) and \(C_{\ell(\lambda)}=\prod_{i=1}^{\ell(\lambda)}\prod_{l=g_i}^{f_i}(1+\beta x_l) \).
\end{thm}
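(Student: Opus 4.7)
The plan is to derive this reformulated identity as a corollary of Theorem~\ref{thm:main_G_row} via the specialization \( \va = \bm{0} \) and \( \vb = -\vb_0 = (-\beta,-\beta,\dots) \), as indicated in the table in the introduction. The proof has two parts: first, identify \( G_{\lambda/\mu,f/g}(\vx) \) with \( G^{\row(g,f)}_{\lambda/\mu}(\vx;\bm{0},-\vb_0) \) at the level of tableau generating functions; second, specialize the Jacobi--Trudi-like formula \eqref{eq:G_row} and check that the resulting determinant matches the right-hand side of \eqref{eq:Matsumura2}.

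For the first part, recall that the weight of \( T \in \MMSVT(\lm) \) contains the factor \( \alpha_j^{\unmarked(T(i,j))-1} \). Setting \( \alpha_j = 0 \) forces every cell to contain exactly one unmarked integer; since the remaining integers in the cell must then all be marked, and marked integers require strict inequality with the preceding entry, each multiset collapses to an ordinary set. The bounds \( r_i \le \min(T(i,j)) \) and \( \max(T(i,j)) \le s_i \) with \( \vr=g \) and \( \vs=f \) become Matsumura's flag conditions, and the remaining factor \( (-\beta_i)^{\marked(T(i,j))} \) with \( \beta_i = -\beta \) equals \( \beta^{|T(i,j)|-1} \). Multiplying over cells yields the generating function \( \sum_T \beta^{|T|-|\lm|}\vx^T \), which is exactly \( G_{\lambda/\mu,f/g}(\vx) \). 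Hence \( G_{\lambda/\mu,f/g}(\vx) = G^{\row(g,f)}_{\lambda/\mu}(\vx;\bm{0},-\vb_0) \).

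For the second part, substitute \( \va = \bm{0} \) (so \( A_k = 0 \) for all \( k \)) and \( \beta_i = -\beta \) (so \( B_k = -k\beta = k\overline{\beta} \)) into \eqref{eq:G_row}. The argument inside \( h_{\lambda_i-\mu_j-i+j} \) simplifies to
\[
  X_{[g_j,f_i]} \ominus (A_{\lambda_i} - A_{\mu_j} - B_{i-1} + B_j) = X_{[g_j,f_i]} \ominus (j-i+1)\overline{\beta},
\]
and the prefactor becomes
\[
  C = \prod_{i=1}^{\ell(\lambda)} \prod_{l=g_i}^{f_i} (1-\beta_i x_l) = \prod_{i=1}^{\ell(\lambda)} \prod_{l=g_i}^{f_i} (1+\beta x_l) = C_{\ell(\lambda)}.
\]
This yields exactly \eqref{eq:Matsumura2}. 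The flag hypothesis \( g_i \le g_{i+1} \) and \( f_i \le f_{i+1} \) whenever \( \mu_i < \lambda_{i+1} \) is precisely the condition required by Theorem~\ref{thm:main_G_row}, so the specialization is legitimate.

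There is no real obstacle: the bulk of the work has been done in Theorem~\ref{thm:main_G_row}, and what remains is bookkeeping to verify that the specialization of the combinatorial model recovers flagged set-valued tableaux and that the specialization of the matrix entries produces the stated \( \ominus \)-expression. One should also note that \( n \) may be taken as \( \ell(\lambda) \): for \( n > \ell(\lambda) \), rows of the determinant indexed by \( i \) with \( \lambda_i = 0 \) yield \( h_0 = 1 \) on the diagonal and, by an argument analogous to the one in Lemma~\ref{lem:rec1}, the matrix is block triangular so that these trivial rows contribute a factor of \( 1 \); this justifies truncating the determinant to size \( \ell(\lambda) \times \ell(\lambda) \).
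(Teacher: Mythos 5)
Your proposal is correct and follows essentially the same route as the paper, which obtains \eqref{eq:Matsumura2} precisely as the specialization of Theorem~\ref{thm:main_G_row} with \( n=\ell(\lambda) \), \( \vr=g \), \( \vs=f \), \( \va=\bm{0} \), and \( \vb=(-\beta,-\beta,\dots) \); your explicit verification that setting \( \alpha_j=0 \) collapses marked multiset-valued tableaux to flagged set-valued tableaux, and that the matrix entries and prefactor specialize as claimed, fills in details the paper leaves implicit (via its introductory table).
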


The above theorem is a special case of our result Theorem~\ref{thm:main_G_row}
with \( n=\ell(\lambda) \), \( \vr=g \), \( \vs=f \), \(\va=(0,0,\dots)\) and
\(\vb=(-\beta,-\beta,\dots)\).

\section{Flagged dual Grothendieck polynomials}
\label{sec:flagged g}

The structure of this section is similar to that of the previous section.
In this section we give a combinatorial model for the refined dual canonical
stable Grothendieck polynomials \( g_\lambda(\vx;\va,\vb) \) using marked
reverse plane partitions. To this end we introduce a flagged version of \(
g_\lambda(\vx;\va,\vb) \) using marked reverse plane partitions and prove a
Jacobi--Trudi-like formula for this flagged version, which reduces to the
Jacobi--Trudi-like formula for \( g_\lambda(\vx;\va,\vb) \) in
Theorem~\ref{thm:JT_ab_intro}. More generally, we consider two flagged versions
of \( g_\lambda(\vx;\va,\vb) \) and extend the partition \( \lambda \) to a skew
shape.

\begin{defn} \label{def:left MRPP} A \emph{reverse plane partition} of
  shape \(\lambda/\mu\) is a filling \(T\) of the cells in
  \(\lambda/\mu\) with positive integers such that the integers are
  weakly increasing in each row and in each column. A
  \emph{left-marked reverse plane partition} (or simply \emph{marked
    reverse plane partition}) of shape $\lm$ is a reverse plane
  partition $T$ of shape $\lm$ in which every entry $T(i,j)$ with
  $T(i,j)=T(i,j+1)$ may be marked. We denote by $\MRPP(\lm)$ the set
  of marked reverse plane partitions of shape $\lm$. For
  $T\in\MRPP(\lm)$, define
  \[
    \wt(T) = \prod_{(i,j)\in\lm} \wt(T(i,j)),
  \]
  where
  \begin{align*}
    \wt(T(i,j)) =
    \begin{cases}
      -\alpha_{j} & \mbox{if $T(i,j)$ is marked,}\\
      \beta_{i-1} & \mbox{if $T(i,j)$ is not marked and $T(i,j)= T(i-1,j)$,}\\
      x_{T(i,j)} & \mbox{otherwise.}
    \end{cases}
  \end{align*}
  Here, the equality $T(i,j)= T(i-1,j)$ means that their underlying integers are equal.
  See Figure~\ref{fig:MRPP'}.
\end{defn}

\begin{figure}
  \ytableausetup{mathmode, boxsize=1.7em}
\ytableaushort{\none\none12{4^*}4,\none{1^*}135,\none11,{3^*}{3^*}3}
\caption{An example of \( T\in\MRPP(\lm)\), where \( \lambda=(6,5,3,3) \) and 
\( \mu=(2,1,1) \). We have the weight \( \wt(T)=x_1x_2x_3^2x_4x_5
(-\alpha_1)(-\alpha_2)^2(-\alpha_5)\beta_1\beta_2^2 \). }
  \label{fig:MRPP'}
\end{figure}

The main goal of this section is to prove the following combinatorial
interpretation for \( g_\lambda(\vx;\va,\vb) \).

\begin{thm} 
\label{thm:MRPP}
We have
\begin{equation}\label{eq:MRPP_l}
  g_\lambda(\vx;\va,\vb) = \sum_{T\in\MRPP(\lambda)} \wt(T).
\end{equation} 
\end{thm}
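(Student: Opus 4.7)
The plan is to mirror the strategy of Section~\ref{sec:flagged G}: introduce a flagged refined dual Grothendieck polynomial as a generating function over flagged marked reverse plane partitions, prove the Jacobi--Trudi-like formula \eqref{eq:10} of Theorem~\ref{thm:main_G_intro} for it by induction, and then deduce Theorem~\ref{thm:MRPP} as a corollary. Concretely, for $\vr,\vs\in\NN^n$ I would set
\[
\MRPP^{\row(\vr,\vs)}(\lm) = \{T\in\MRPP(\lm) : r_i\le T(i,j)\le s_i \text{ for all }(i,j)\in\lm\},
\]
define $g_\lm^{\row(\vr,\vs)}(\vx;\va,\vb) = \sum_{T\in\MRPP^{\row(\vr,\vs)}(\lm)} \wt(T)$, and let $\tg_\lm^{\row(\vr,\vs)}(\vx;\va,\vb)$ denote the determinant on the right side of \eqref{eq:10}. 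Once the identity $g_\lm^{\row(\vr,\vs)} = \tg_\lm^{\row(\vr,\vs)}$ is established, specializing to $\mu=\emptyset$, $\vr=(1^n)$, $\vs=(n^n)$ yields $g_\lambda(\vx_n;\va,\vb) = \sum_{T\in\MRPP(\lambda),\ \max(T)\le n}\wt(T)$ via \eqref{eq:5}, and letting $n\to\infty$ gives \eqref{eq:MRPP_l}.

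The proof of $g_\lm^{\row(\vr,\vs)} = \tg_\lm^{\row(\vr,\vs)}$ would proceed by strong induction on $N = n+|\lm| + \sum_i s_i$, exactly as in the proof of Theorem~\ref{thm:row_f_G}. The inductive machinery consists of four lemmas that must be proved for both $g$ and $\tg$:
\begin{enumerate}
\item (Splitting) If $\mu_k\ge \lambda_{k+1}$ then both sides factor as a product of two similar expressions in $(\lambda^{(1)},\mu^{(1)})$ and $(\lambda^{(2)},\mu^{(2)})$; for $\tg$ this uses that $\lambda_i-\mu_j-i+j<0$ forces the relevant block of the matrix to vanish by Lemma~\ref{lem:det(h)=0} and Lemma~\ref{lem:det(h)=det*det}.
\item (Initial condition) $g_{\lambda/\lambda}^{\row(\vr,\vs)} = \tg_{\lambda/\lambda}^{\row(\vr,\vs)} = 1$.
\item (Vanishing) If $s_k<r_k$ and $\mu_k<\lambda_k$ then both sides are $0$.
\item (Flag reduction) Two recurrences expressing $g_\lm^{\row(\vr,\vs)}$ in terms of $g_\lm^{\row(\vr,\vs-\epsilon_k)}$ and $g_{(\lambda-\epsilon_k)/\mu}^{\row(\vr,\vs)}$, one when $\lambda_k>\lambda_{k+1}$ and $s_{k-1}<s_k$, and one when $\lambda_k=\lambda_{k+1}$ and $s_k=s_{k+1}$, exactly parallel to Lemmas~\ref{lem:rec2} and \ref{lem:rec3}.
\end{enumerate}

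On the combinatorial side, the flag-reduction recurrences come from partitioning $\MRPP^{\row(\vr,\vs)}(\lm)$ according to the entry $T(k,\lambda_k)$: since $T(k,\lambda_k)$ cannot be marked (the cell $(k,\lambda_k+1)$ lies outside $\lm$), it is either strictly less than $s_k$, or equal to $s_k$ and different from $T(k-1,\lambda_k)$ (contributing $x_{s_k}$), or equal to $s_k$ and equal to $T(k-1,\lambda_k)$ (contributing $\beta_{k-1}$). The ratio of signs and the $\alpha_{\lambda_k}$ that shift between $A_{\lambda_k-1}$ and $A_{\lambda_k}$ enter when we peel $T(k,\lambda_k)=s_k$ off. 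On the determinant side, one uses Lemma~\ref{lem:h_m[Z]} twice to split $h_{\lambda_k-\mu_j-k+j+\ell}[X_{[r_j,s_k]}]$ and to shift the plethystic alphabet in $B_{k-1}$, matching the combinatorial identity term by term; the argument for $r_j>s_k$ is handled as in the proof of Lemma~\ref{lem:rec2} by observing that the relevant entries of the matrix vanish.

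The main obstacle, compared with the $G$-case, is that the MRPP weight mixes column-indexed marks ($-\alpha_j$) with row-indexed vertical repeats ($\beta_{i-1}$), while the plethystic alphabet in \eqref{eq:10} records $A_{\lambda_i-1}$ and $B_{i-1}-B_{j-1}$. Thus in the recurrence step that decreases $s_k$ by $1$ and in the step that removes a cell (passing from $\lambda$ to $\lambda-\epsilon_k$), one must carefully match $-x_{s_k}\alpha_{\lambda_k}$ and $x_{s_k}\beta_{k-1}$ coefficients on the combinatorial side with the two shifts induced by $A_{\lambda_k-1}\to A_{\lambda_k}$ and $B_{k-1}\to B_{k-1}-\beta_{k-1}$ respectively on the determinant side. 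Once that bookkeeping is carried out, the analogous column-flagged Jacobi--Trudi-like formula \eqref{eq:11} is proved by the same inductive scheme, using Lemma~\ref{lem:e_m[Z]} in place of Lemma~\ref{lem:h_m[Z]} and splitting on $T(\lambda_k,k)$ rather than $T(k,\lambda_k)$.
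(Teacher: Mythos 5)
Your high-level plan (flag, prove a Jacobi--Trudi formula by induction, specialize) matches the paper's, but the inductive engine you describe in item (4) does not work for marked reverse plane partitions, and this is precisely where the paper has to do something genuinely different. For marked multiset-valued tableaux the weight of a cell is intrinsic to that cell, so in Lemma~\ref{lem:rec2} the operation ``extract the $s_k$'s and delete the cell $(k,\lambda_k)$'' is weight-multiplicative into $\MMSVT^{\row(\vr,\vs)}((\lambda-\epsilon_k)/\mu)$. For marked RPPs this fails: by Definition~\ref{def:left MRPP} the entry $T(k,\lambda_k-1)$ may be marked only if it equals $T(k,\lambda_k)$, so after deleting the cell $(k,\lambda_k)$ with entry $s_k$ the surviving filling need not lie in $\MRPP((\lambda-\epsilon_k)/\mu)$ at all (its new last cell in row $k$ may carry a mark that is now illegal), and the sum of $\wt(T)$ over the tableaux with $T(k,\lambda_k)=s_k$ is not a scalar multiple of $g^{\row(\vr,\vs)}_{(\lambda-\epsilon_k)/\mu}$. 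The recurrence you posit therefore does not close within the family $g^{\row(\vr,\vs)}_{\lm}$, no matter how the $\alpha$ and $\beta$ coefficients are bookkept. This is exactly why the paper enlarges the family: it introduces dented partitions and the sets $\MRPP^{\row(\vr,\vs)}_I(\lm)$, whose virtual entry $T(i,\lambda_i+1)=s_i$ for $i\in I$ remembers the value of the deleted cell (Definition~\ref{defn:MRPPI}), proves the more general Proposition~\ref{prop:MRPPI}, and peels one cell at a time (the minimal cell of a dented partition), with the recursion of Lemma~\ref{lem:rec_main1} carrying coefficients $(x_{s_k}-\chi(k\in I)\alpha_{\lambda_k})$ and $(\beta_{k-1}-\chi(k\in I)\alpha_{\lambda_k})$ rather than the rational functions of the $G$-case.

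There are two further steps your plan omits. First, before the induction can start the paper must replace the determinant in \eqref{eq:10} by one whose $(i,j)$ entry carries the factor $\chi(r_j\le s_i)$ (Lemma~\ref{lem:chi=no_chi}); without this the vanishing base case $\vr\not\le\vs$ fails on the determinant side, since, unlike the $\ominus$-type entries of the $G$-case, the entries $h_m[X_{[r_j,s_i]}-A_{\lambda_i-1}+A_{\mu_j}+B_{i-1}-B_{j-1}]$ need not vanish when $r_j>s_i$. Second, for the column-flagged identity \eqref{eq:11} one cannot simply ``split on $T(\lambda_k,k)$'': the paper first transposes, which forces a change from left-marked to right-marked (equivalently bottom-marked) RPPs, and the weight-preserving bijection between these (Proposition~\ref{prop:leftmarked=rightmarked}) is a separate, nontrivial combinatorial step with no analogue in Section~\ref{sec:flagged G}.
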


\begin{remark}
  Yeliussizov \cite[Theorem 7.2]{Yeliussizov2017} found a
  combinatorial interpretation for
  \( g_\lambda^{(\alpha,\beta)}(\vx)\), which is equal to
  \(g_\lambda(\vx;\va,\vb) \) with \( \va=(-\alpha,-\alpha,\dots) \)
  and \( \vb=(\beta,\beta,\dots) \), using rim border tableaux. In
  Appendix~\ref{sec:RBT}, we show that there is a weight-preserving
  bijection between marked rim border tableaux and marked RPPs.
  Therefore, Theorem~\ref{thm:MRPP} generalizes Yeliussizov's result.
\end{remark}

Similar to our approach in the previous section, in order to prove
Theorem~\ref{thm:MRPP} we introduce a flagged version. For completeness we
again consider row- and column-flagged versions on skew shapes.

\begin{defn}\label{def:1}
  Let \( \vr=(r_1,\dots,r_n)\in\PP^n \) and \( \vs=(s_1,\dots,s_n)\in\PP^n \).
  We denote by $\MRPP^{\row(\vr, \vs)}(\lm)$ the set of $T\in \MRPP(\lm)$ such
  that $r_i \le T(i,j)\le s_i$ for all $(i,j)\in \lm$. Similarly,
  $\MRPP^{\col(\vr, \vs)}(\lm)$ denotes the set of $T\in \MRPP(\lm)$ such that
  $r_j \le T(i,j)\le s_j$ for all $(i,j)\in \lm$.
  We define
  \emph{row-flagged} and \emph{column-flagged
   refined dual canonical stable Grothendieck polynomials} by
  \begin{align*}
    g_\lm^{\row(\vr, \vs)}(\vx;\va,\vb)
    &=\sum_{T\in\MRPP^{\row(\vr, \vs)}(\lm)} \wt(T),\\
    g_\lm^{\col(\vr, \vs)}(\vx;\va,\vb)
    &=\sum_{T\in\MRPP^{\col(\vr, \vs)}(\lm)} \wt(T).
  \end{align*}
\end{defn}

Now we state Jacobi--Trudi-like formulas
for the two flagged versions of \( g_\lambda(\vx;\va,\vb) \).

\begin{thm}
\label{thm:main_g}
Let \( \lambda,\mu\in\Par_n, \vr,\vs\in \PP^n \) such that \( r_i\le r_{i+1} \) and \( s_i\le s_{i+1} \)
whenever \( \mu_{i}<\lambda_{i+1} \). Then we have
\begin{align}
  \label{eq:g_row}
  g_\lm^{\row(\vr, \vs)}(\vx;\va,\vb)
  &= \det\left(   h_{\lambda_i-\mu_j-i+j}
  [X_{[r_j,s_i]}-A_{\lambda_i-1}+A_{\mu_j}+B_{i-1} - B_{j-1}]\right)_{i,j=1}^n,\\
  \label{eq:g_col}
  g_{\lambda'/\mu'}^{\col(\vr, \vs)}(\vx;\va,\vb)
  &= \det\left(   e_{\lambda_i-\mu_j-i+j}
  [X_{[r_j,s_i]}-A_{i-1}+A_{j-1}+B_{\lambda_i-1} - B_{\mu_j}]\right)_{i,j=1}^n.
\end{align}
\end{thm}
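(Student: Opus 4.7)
The plan is to mirror the strategy of Section~\ref{sec:flagged G}. Introduce the auxiliary functions
\[
  \tg_\lm^{\row(\vr,\vs)}(\vx;\va,\vb) = \det\left( h_{\lambda_i-\mu_j-i+j}[X_{[r_j,s_i]}-A_{\lambda_i-1}+A_{\mu_j}+B_{i-1}-B_{j-1}] \right)_{i,j=1}^n
\]
and its column analogue $\tg_\lmc^{\col(\vr,\vs)}(\vx;\va,\vb)$ using $e_{\lambda_i-\mu_j-i+j}$, so that the theorem becomes the identities $g=\tg$ in both flavors. I then prove these identities by strong induction on $N = n + |\lm| + \sum_{i=1}^n s_i$, showing that both sides satisfy the same base cases and the same recurrences.

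The required lemmas parallel those in Section~\ref{sec:flagged G}. First, a splitting lemma in the spirit of Lemma~\ref{lem:rec1}: when $\mu_k\ge\lambda_{k+1}$, both $g$ and $\tg$ factor as a product of the corresponding quantities on $(\lambda^{(1)}/\mu^{(1)},\vr^{(1)},\vs^{(1)})$ and $(\lambda^{(2)}/\mu^{(2)},\vr^{(2)},\vs^{(2)})$, with a suitable substitution $\phi_\va^k$ or $\phi_\vb^k$ shifting the parameters; on the determinant side this follows from a block-upper-triangular argument as in Lemma~\ref{lem:rec1}. Second, the initial condition $g_{\lambda/\lambda}^{\row(\vr,\vs)} = \tg_{\lambda/\lambda}^{\row(\vr,\vs)} = 1$, following the style of Lemma~\ref{lem:G=1}. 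Third, the boundary vanishing $g=\tg=0$ when $s_k<r_k$ and $\mu_k<\lambda_k$, argued as in Lemma~\ref{lem:rec4}. Fourth, the main recurrences analogous to Lemmas~\ref{lem:rec2} and \ref{lem:rec3} that reduce either $s_k$ or $\lambda_k$ by one.

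On the combinatorial side, the main recurrence partitions $\MRPP^{\row(\vr,\vs)}(\lm)$ according to how many entries of row $k$ equal $s_k$: either none (reducing $\vs$ to $\vs-\epsilon_k$), or at least one (which can be peeled off after accounting for the marking option and the vertical-repeat rule). The three possible weights $-\alpha_j$, $\beta_{i-1}$, $x_{T(i,j)}$ give rise to a three-term linear combination expressing $g^{\row(\vr,\vs)}_\lm$ in terms of $g^{\row(\vr,\vs-\epsilon_k)}_\lm$ and $g^{\row(\vr,\vs)}_{(\lambda-\epsilon_k)/\mu}$. On the determinant side, the matching identity is obtained by applying Lemma~\ref{lem:h_m[Z]} in the $x$-variable (replacing $X_{[r_j,s_k]}$ by $X_{[r_j,s_k-1]}+x_{s_k}$) and in the plethystic $\alpha$, $\beta$ directions, then using linearity of the determinant in its $k$th row; the degenerate case $r_j>s_k$ is handled by showing that the relevant matrix entries all vanish, exactly as in the proof of Lemma~\ref{lem:rec2}. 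The column-flagged identity \eqref{eq:g_col} is proved by the transposed argument using Lemma~\ref{lem:e_m[Z]}.

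The main obstacle will be the bookkeeping in the recurrence lemmas, because the marked-RPP weight has three cases whose $(\alpha,\beta,x)$-decomposition must align precisely with the three-term plethystic expansion of $h_m[X_{[r_j,s_i]}-A_{\lambda_i-1}+A_{\mu_j}+B_{i-1}-B_{j-1}]$. Unlike the $G$-case, here the $\beta$-dependence is internal to a single row of the $\MRPP$ (coming from vertical repeats) rather than being absorbed in the global prefactor $C$, so the combinatorial and algebraic sides must be matched directly without the cancellation afforded by $C$ in Theorem~\ref{thm:main_G_row}. Once these recurrences and the splitting and vanishing lemmas are established, the induction on $N$ proceeds exactly as in the proof of Theorem~\ref{thm:row_f_G}, choosing the unique index $k$ with $s_{k-1}<s_k=\cdots=s_n$ (or its column analogue) and invoking whichever recurrence applies.
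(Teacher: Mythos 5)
Your overall framework (define $\tg$ as the determinant, match recurrences and initial conditions, induct) is the right one, but the specific recurrence you propose for the combinatorial side does not close up, and this is the central difficulty of the $g$-case rather than mere ``bookkeeping.'' If you peel the cell $(k,\lambda_k)$ off a marked RPP of shape $\lm$, the residual filling is \emph{not} an element of $\MRPP^{\row(\vr,\vs)}((\lambda-\epsilon_k)/\mu)$ with the standard weight: the marking option for the new boundary cell $(k,\lambda_k-1)$ is governed by whether $T(k,\lambda_k-1)=T(k,\lambda_k)$, and the weight ($x$ versus $\beta$) of a cell depends on the cell above it --- both pieces of information refer to cells that have been removed. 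So the right-hand side of your proposed three-term recurrence is not a sum over any of the tableau families you have defined, and the induction cannot proceed within the class $\{\MRPP^{\row(\vr,\vs)}(\lm)\}$. The paper resolves this by enlarging the class: it peels one cell at a time in a fixed column-by-column order, so the intermediate shapes are \emph{dented partitions}, and it decorates each object with a subset $I\subseteq[n]$ recording that the phantom cell $(i,\lambda_i+1)$ carries the value $s_i$ (Definition~\ref{defn:MRPPI}); the statement actually proved by induction is the generalization in Proposition~\ref{prop:MRPPI}, whose two-case recursion (Lemma~\ref{lem:rec_main1}) distinguishes $s_{k-1}=s_k$ from $s_{k-1}<s_k$ precisely to decide whether the vertical repeat at the peeled cell is forced or impossible. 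Without this (or an equivalent device) your Step ``peel off the entries equal to $s_k$'' has no target set, and the parallel determinant identity has no matrix to be compared with.

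Two further points you pass over. First, the determinant entries $h_m[X_{[r_j,s_i]}-A_{\lambda_i-1}+A_{\mu_j}+B_{i-1}-B_{j-1}]$ do not vanish when $r_j>s_i$ (the plethystic argument still contains the $A,B$ part), so the analogue of Lemma~\ref{lem:rec4} fails as stated; the paper must first replace the determinant by one with explicit factors $\chi(r_j\le s_i)$ and prove the two determinants equal (Lemma~\ref{lem:chi=no_chi}). Second, for \eqref{eq:g_col} ``the transposed argument'' is not immediate: transposing a left-marked RPP turns the marking condition $T(i,j)=T(i,j+1)$ into $T(i,j)=T(i+1,j)$, which is a different object with a different weight, so one needs the weight-preserving bijection between left-marked and right-marked RPPs (Proposition~\ref{prop:leftmarked=rightmarked}) before the row argument can be transposed.
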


The proof of this theorem will be given in the next two subsections.
Our proof is based on the ideas in \cite{Kim_JT22}.
We introduce new notions in Definitions~\ref{defn:MRPPI} and \ref{defn:BMRPPI},
which simplify the proofs in \cite{Kim_JT22}.

By specializing Theorem~\ref{thm:main_g} we obtain the following
corollary, which implies 
the combinatorial interpretation for \( g_\lambda(\vx_n;\va,\vb) \)
in Theorem~\ref{thm:MRPP} as \( n\to\infty \).

\begin{cor} \label{cor:g=MRPP}
  For a partition \( \lambda\in\Par_n \),
  we have
  \[
    g_\lambda(\vx_n;\va,\vb) = \sum_{T\in\MRPP(\lambda), \max(T)\le n} \wt(T),
  \]
  where \( \max(T) \) is the largest entry in \( T \).
\end{cor}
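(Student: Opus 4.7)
The plan is to specialize Theorem~\ref{thm:main_g} exactly as was done in Corollary~\ref{cor:G=MMSVT} for the Grothendieck side. Specifically, I would take \( \mu = \emptyset \), \( \vr = (1^n) \), and \( \vs = (n^n) \), and then match both sides of \eqref{eq:g_row} against the statement of the corollary.

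First, I would unpack the combinatorial side. The row-flag conditions \( r_i \le T(i,j) \le s_i \) become \( 1 \le T(i,j) \le n \), and since marked RPPs are by definition filled with positive integers the lower bound is vacuous. Hence \( \MRPP^{\row(\vr,\vs)}(\lambda) \) is precisely the set of \( T \in \MRPP(\lambda) \) with \( \max(T) \le n \), and by the definition of \( g_\lambda^{\row(\vr,\vs)}(\vx;\va,\vb) \) we obtain
\[
  g_\lambda^{\row(\vr,\vs)}(\vx;\va,\vb) = \sum_{\substack{T \in \MRPP(\lambda) \\ \max(T) \le n}} \wt(T).
\]

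Next, I would unpack the algebraic side. Since \( \vr \) and \( \vs \) are constant sequences they are weakly increasing, so the hypothesis of Theorem~\ref{thm:main_g} is trivially satisfied (for any \( \mu \), in particular for \( \mu = \emptyset \)). Applying \eqref{eq:g_row} with \( \mu = \emptyset \) (so that \( \mu_j = 0 \) and \( A_{\mu_j} = 0 \)) and noting \( X_{[1,n]} = X_n \) gives
\[
  g_\lambda^{\row(\vr,\vs)}(\vx;\va,\vb) = \det\left( h_{\lambda_i - i + j}[X_n - A_{\lambda_i - 1} + B_{i-1} - B_{j-1}] \right)_{i,j=1}^n,
\]
which is exactly the Jacobi--Trudi-like determinant \eqref{eq:5} for \( g_\lambda(\vx_n;\va,\vb) \) in Theorem~\ref{thm:JT_ab_intro}. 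Equating the two displays for \( g_\lambda^{\row(\vr,\vs)}(\vx;\va,\vb) \) yields the corollary.

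No serious obstacle is expected: the content is entirely absorbed by Theorem~\ref{thm:main_g}, and the corollary is a bookkeeping specialization together with one direct comparison to \eqref{eq:5}. The only minor verifications are that the flag hypothesis of Theorem~\ref{thm:main_g} holds for the chosen \( \vr,\vs \) (immediate) and that the bounded-entry condition \( \max(T) \le n \) is captured exactly by the row-flag \( \vs = (n^n) \) (also immediate, by the positivity convention on entries of \( T \)).
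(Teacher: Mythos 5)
Your proposal is correct and is essentially identical to the paper's proof: the paper proves this corollary by the same specialization \( \vr=(1^n) \), \( \vs=(n^n) \) of the flagged formula \eqref{eq:g_row}, followed by comparison with \eqref{eq:5} of Theorem~\ref{thm:JT_ab_intro}, exactly mirroring the proof of Corollary~\ref{cor:G=MMSVT}. All the minor verifications you flag (the flag hypothesis for constant sequences and the identification of the row bound with \( \max(T)\le n \)) are indeed immediate.
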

\begin{proof}
  This can be proved by the same argument as in the proof of
  Corollary~\ref{cor:G=MMSVT} using \eqref{eq:g_row} and Theorem~\ref{thm:JT_ab_intro}.
\end{proof}

\begin{remark}
  The identity~\eqref{eq:g_col} with \( \va=(0,0,\dots) \) is proved in
  \cite[Theorem~1.4]{Kim_JT22} under the weaker assumption \( r_i\le
  r_{i+1}+1 \) and \( s_i\le s_{i+1}+1 \) whenever \( \mu_{i}<\lambda_{i+1} \).
  We cannot replace the assumption in Theorem~\ref{thm:main_g} by this weaker
  one. For example, if \( \lambda=(1,1), \mu=(0,0),
  \vr=(1,1) \) and \( \vs=(2,1) \), then
  the left and right sides of \eqref{eq:g_col} are \( x_1^2 - \alpha_1 x_1 \)
  and \( x_1^2 - \alpha_1x_1 - \alpha_1x_2 \), respectively.
\end{remark}
 
The following lemmas show that we may assume \( \mu\subseteq\lambda \) and \( \vr\le\vs \) when we
prove Theorem~\ref{thm:main_g}.

\begin{lem} \label{lem:mu_subset_la}
  Under the assumption in
  Theorem~\ref{thm:main_g}, if \( \mu\not\subseteq\lambda \), then the both
  sides of \eqref{eq:g_row} and \eqref{eq:g_col} are equal to \( 0 \).
\end{lem}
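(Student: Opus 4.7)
The plan is to dispatch this lemma by handling the two sides of each identity independently, since both sides vanish for essentially trivial reasons once $\mu\not\subseteq\lambda$.

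For the combinatorial side, if $\mu\not\subseteq\lambda$ then $\lm$ is not a well-defined skew shape, so there are no fillings to consider, i.e., $\MRPP^{\row(\vr,\vs)}(\lm)=\emptyset$. Since $\mu\subseteq\lambda$ is equivalent to $\mu'\subseteq\lambda'$, the hypothesis also forces $\MRPP^{\col(\vr,\vs)}(\lambda'/\mu')=\emptyset$. The flagged generating functions $g_\lm^{\row(\vr,\vs)}(\vx;\va,\vb)$ and $g_{\lambda'/\mu'}^{\col(\vr,\vs)}(\vx;\va,\vb)$ are therefore empty sums, each equal to $0$.

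For the determinant side, observe that the right-hand sides of \eqref{eq:g_row} and \eqref{eq:g_col} fit exactly the hypotheses of Lemma~\ref{lem:det(h)=0}: their $(i,j)$-entries have the form $h_{\lambda_i-\mu_j-i+j}[Z^{(1)}_{i,j}]$ and $e_{\lambda_i-\mu_j-i+j}[Z^{(2)}_{i,j}]$ respectively, for the specific formal power series
\[
  Z^{(1)}_{i,j} = X_{[r_j,s_i]}-A_{\lambda_i-1}+A_{\mu_j}+B_{i-1}-B_{j-1},\qquad
  Z^{(2)}_{i,j} = X_{[r_j,s_i]}-A_{i-1}+A_{j-1}+B_{\lambda_i-1}-B_{\mu_j}.
\]
Lemma~\ref{lem:det(h)=0} asserts that such determinants vanish whenever $\mu\not\subseteq\lambda$, regardless of the power series $Z_{i,j}$; applying it in each case yields the desired vanishing.

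No genuine obstacle is anticipated here: the entire proof amounts to observing that the tableau set is empty and then invoking Lemma~\ref{lem:det(h)=0}. The role of this lemma is purely preparatory, allowing the subsequent proof of Theorem~\ref{thm:main_g} to restrict attention to the case $\mu\subseteq\lambda$ (and, via Lemma~\ref{lem:mu_subset_la} together with a similar reduction for $\vr\le\vs$, to proceed by induction as in Section~\ref{sec:proof-theor-refthm:m-1}).
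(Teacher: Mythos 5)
Your proposal is correct and follows exactly the paper's own (two-line) argument: the combinatorial sides vanish because the tableau sets are empty when $\mu\not\subseteq\lambda$, and the determinant sides vanish by Lemma~\ref{lem:det(h)=0}. No issues.
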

\begin{proof}
  By definition, if \( \mu\not\subseteq\lambda \), we have \(
  g_{\lambda/\mu}^{\row(\vr,\vs)}(\vx;\va,\vb) =
  g_{\lambda'/\mu'}^{\col(\vr,\vs)}(\vx;\va,\vb) = 0 \). By
  Lemma~\ref{lem:det(h)=0}, the determinants in \eqref{eq:g_row} and
  \eqref{eq:g_col} are equal to \( 0 \).
\end{proof}

\begin{lem}
  Under the assumption in 
  Theorem~\ref{thm:main_g}, if \( \mu_{k}<\lambda_{k} \) and \( r_{k}>s_{k} \) for some \( k \), then the both sides of \eqref{eq:g_row} and \eqref{eq:g_col} are equal to \( 0 \).
\end{lem}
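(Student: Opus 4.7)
The plan is to handle the combinatorial and determinant sides of \eqref{eq:g_row} and \eqref{eq:g_col} separately. Combinatorially, the argument is immediate: since $\mu_k < \lambda_k$, row $k$ of $\lambda/\mu$ and column $k$ of $\lambda'/\mu'$ are both nonempty, and any marked reverse plane partition in either flagged set would require an entry in row $k$ (respectively column $k$) lying in $\{r_k, \ldots, s_k\}$, which is empty since $r_k > s_k$. Hence $g_{\lm}^{\row(\vr,\vs)}(\vx;\va,\vb) = g_{\lmc}^{\col(\vr,\vs)}(\vx;\va,\vb) = 0$.

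For the determinant side, I would first dispose of the case $\mu \not\subseteq \lambda$ by invoking Lemma~\ref{lem:mu_subset_la}, so we may assume $\mu \subseteq \lambda$. Next I would locate the maximal interval $[a, b] \ni k$ for which $\mu_i < \lambda_{i+1}$ holds for all $a \le i \le b - 1$ (with $a = 1$ or $\mu_{a-1} \ge \lambda_a$, and $b = n$ or $\mu_b \ge \lambda_{b+1}$); by the hypothesis of Theorem~\ref{thm:main_g}, $\vr$ and $\vs$ are weakly increasing on $[a, b]$. Both matrices are then block upper triangular with respect to the decomposition of $\{1, \ldots, n\}$ into maximal such intervals: for indices $j < i$ in different blocks one can choose a separator $t$ with $\mu_t \ge \lambda_{t+1}$ and $j \le t < i$, which forces $\mu_j \ge \lambda_i$ and hence a negative degree $m = \lambda_i - \mu_j - i + j$, while the plethystic argument is a nonnegative sum of variables, so $h_m$ (resp.\ $e_m$) vanishes. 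Therefore the full determinant factors as a product over blocks, and it suffices to show the $[a,b]$-block has determinant zero.

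Finally, within the $[a, b]$-block I would identify a rectangular zero region of dimensions $(k - a + 1) \times (b - k + 1)$ covering rows $a, \ldots, k$ and columns $k, \ldots, b$. Indeed, for such $(i, j)$ the monotonicity of $\vr, \vs$ on $[a, b]$ gives $r_j \ge r_k > s_k \ge s_i$, so $X_{[r_j, s_i]} = 0$; and since $\mu_j \le \mu_k < \lambda_k \le \lambda_i$, i.e., $\mu_j \le \lambda_i - 1$, the remaining plethystic argument involves exactly $m - 1$ distinct variables (among the $\alpha_\ell$ and $\beta_\ell$) where $m = \lambda_i - \mu_j - i + j \ge 1$, so $h_m$ (resp.\ $e_m$) of it is zero. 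Since $(k - a + 1) + (b - k + 1) > b - a + 1$, no permutation of the indices in $[a, b]$ can avoid the zero region, so the block determinant vanishes and with it the whole determinant. The main bookkeeping point is verifying that after $X_{[r_j, s_i]}$ drops out the plethystic argument contains exactly $m - 1$ variables in both the row and column cases, which is the precise condition that triggers the elementary/complete-homogeneous vanishing.
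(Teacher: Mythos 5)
Your proposal is correct and follows essentially the same route as the paper: the flagged tableau sets are empty, and the determinant vanishes because of block upper-triangularity (from any separator $\mu_t\ge\lambda_{t+1}$) combined with the rectangular zero region on rows $\le k$ and columns $\ge k$, where $X_{[r_j,s_i]}=0$ and the remaining plethystic argument has exactly $m-1$ variables. The only difference is organizational --- the paper first reduces to $\mu_m<\lambda_m$ for all $m$ via Lemma~\ref{lem:det(h)=det*det} and peels off blocks inductively, whereas you decompose into maximal blocks at once --- but the key computations are identical.
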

\begin{proof}
  Since \( \mu_{k}<\lambda_{k} \) and \( r_{k}>s_{k} \), we have \(
  \MRPP^{\row(\vr, \vs)}(\lm) = \MRPP^{\col(\vr, \vs)}(\lambda'/\mu') = \emptyset\) and
  therefore \( g_{\lambda/\mu}^{\row(\vr,\vs)}(\vx;\va,\vb) =
  g_{\lambda'/\mu'}^{\col(\vr,\vs)}(\vx;\va,\vb) = 0 \).

  We now show that the right hand side of \eqref{eq:g_row} and \eqref{eq:g_col}
  are equal to \( 0 \). By Lemma~\ref{lem:mu_subset_la} and
  Lemma~\ref{lem:det(h)=det*det}, we may assume \( \mu_{m}<\lambda_{m} \) for
  all \( m \). Let \( (R_{i,j})_{i,j=1}^n \) be the matrix in the right hand
  side of \eqref{eq:g_row}.

  We first suppose that \( \mu_{m}<\lambda_{m+1} \) for all \( m \). Then the
  assumption in Theorem~\ref{thm:main_g} gives \( r_{m}\le r_{m+1} \) and \(
  s_{m}\le s_{m+1} \) for all \( m \). Thus, for \( 1\le i\le k \) and \( k\le
  j\le n \), we have \( r_{j}\ge r_k>s_k\ge s_{i} \) and
  \( \mu_{j}\le \mu_k <\lambda_k\le \lambda_{i} \), which imply
  \begin{align}
    R_{i,j}
    &= h_{\lambda_i-\mu_j-i+j}[-A_{\lambda_i-1}+A_{\mu_j}+B_{i-1} - B_{j-1}] \nonumber \\
    &= h_{\lambda_i-\mu_j-i+j}[-(\alpha_{\mu_j+1}+\dots+\alpha_{\lambda_i-1})-(\beta_i+\dots+\beta_{j-1})] \nonumber \\
    \label{eq:R_ij=0}
    &= (-1)^{\lambda_i-\mu_j-i+j}
      e_{\lambda_i-\mu_j-i+j}(\alpha_{\mu_j+1},\dots,\alpha_{\lambda_i-1},\beta_i,\dots,\beta_{j-1}).
  \end{align}
  Since the number of variables in \eqref{eq:R_ij=0} is less than \(
  \lambda_{i}-\mu_{j}-i+j>0 \), we have \( R_{i,j}=0 \) for all \( 1\le i\le k
  \) and \( k\le j\le n \). This implies that the determinant in
  \eqref{eq:g_row} is \( 0 \).
  
  Now we suppose \( \mu_{m}\ge\lambda_{m+1} \) for some \( m \). For \( m+1\le
  i\le n \) and \(1\le j\le m \), we have \( \mu_j\ge \mu_m\ge \lambda_{m+1}\ge
  \lambda_i \) and therefore \( \lambda_i-\mu_j-i+j<0 \). Then, since \(
  R_{i,j}=h_{\lambda_{i}-\mu_{j}-i+j}[Z] \) for some formal power series \( Z
  \), we have \( R_{i,j}=0 \). Thus the matrix \( (R_{i,j})_{i,j=1}^{n} \) is a
  block upper triangular matrix and its determinant is written as
  \[
    \det(R_{i,j})_{i,j=1}^{n}=\det(R_{i,j})_{i,j=1}^{m}\det(R_{i,j})_{i,j=m+1}^{n}.
  \]
  Therefore, one can inductively deduce that the determinant in \eqref{eq:g_row}
  is equal to \( 0 \).
  
  Similarly, one can show that the right hand side of \eqref{eq:g_col} is equal to \( 0 \).
\end{proof}

The rest of this section is devoted to proving Theorem~\ref{thm:main_g}.

\subsection{Proof of \eqref{eq:g_row}: row-flagged dual Grothendieck polynomials}
\label{sec:proof-theor-refthm:m}

In this subsection we prove the first identity \eqref{eq:g_row} in
Theorem~\ref{thm:main_g}. Our strategy is as follows. We first show that
\eqref{eq:g_row} is equivalent to the following formula:
\begin{multline}
  \label{eq:g_row'}
  g_\lm^{\row(\vr, \vs)}(\vx;\va,\vb)\\
  = \det\left( \chi(r_j\le s_i) h_{\lambda_i-\mu_j-i+j}
    [X_{[r_j,s_i]}-A_{\lambda_i-1}+A_{\mu_j}+B_{i-1} - B_{j-1}]\right)_{i,j=1}^n.
\end{multline}
 We then find a generalization of this new formula and
prove it using recursions.

The following lemma shows that \eqref{eq:g_row} and \eqref{eq:g_row'} are equivalent.

\begin{lem}\label{lem:chi=no_chi}
  Let \( \lambda,\mu\in\Par_n, \vr,\vs\in \PP^n \) with \( \mu\subseteq\lambda
  \) and \( \vr\le\vs \) such that \( r_i\le r_{i+1} \) and \( s_i\le s_{i+1} \)
  whenever \( \mu_{i}<\lambda_{i+1} \). Then
 \begin{multline*}
   \det\left( \chi(r_j\le s_i)   h_{\lambda_i-\mu_j-i+j}
     [X_{[r_j,s_i]}-A_{\lambda_i-1}+A_{\mu_j}+B_{i-1} - B_{j-1}]
  \right)_{i,j=1}^n\\
  =\det\left(    h_{\lambda_i-\mu_j-i+j}
    [X_{[r_j,s_i]}-A_{\lambda_i-1}+A_{\mu_j}+B_{i-1} - B_{j-1}]
  \right)_{i,j=1}^n.
 \end{multline*}
\end{lem}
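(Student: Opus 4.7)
The plan is to show the two matrices agree entry-by-entry after reducing, by induction, to the case $\mu_i<\lambda_i$ for all $i$. On the diagonal, $\vr\le\vs$ forces $\chi(r_i\le s_i)=1$, so only off-diagonal positions $(i,j)$ with $r_j>s_i$ need attention.

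I induct on $n$, with $n=1$ immediate. For the inductive step, if $\mu_k=\lambda_k$ for some $k$, then a direct check shows $\lambda_i-\mu_j-i+j<0$ whenever $i\ge k\ge j$ and $(i,j)\ne(k,k)$, so the corresponding entries of the no-$\chi$ matrix vanish, while the $(k,k)$-entry equals $h_0[\cdot]=1$. Multiplying by $\chi\in\{0,1\}$ preserves these zeros, and $\chi(r_k\le s_k)=1$ keeps the $(k,k)$-entry equal to $1$. Hence both matrices are block upper triangular with a central $1\times1$ block equal to $[1]$, and each determinant factors into the product over the sub-blocks $\{1,\dots,k-1\}$ and $\{k+1,\dots,n\}$ (the no-$\chi$ factoring is a direct application of Lemma~\ref{lem:det(h)=det*det}, the $\chi$ factoring follows from the same block-triangular argument). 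The sub-problems inherit the hypotheses of the lemma (monotonicity, $\mu\subseteq\lambda$, $\vr\le\vs$), so the induction hypothesis finishes this case.

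In the remaining case $\mu_i<\lambda_i$ for all $i$, I claim the two matrices are literally equal by showing each off-diagonal entry with $r_j>s_i$ vanishes. Fix such $(i,j)$. Since $X_{[r_j,s_i]}=0$, the entry equals $h_m[A_{\mu_j}-A_{\lambda_i-1}+B_{i-1}-B_{j-1}]$ with $m=\lambda_i-\mu_j-i+j$. If $i<j$, then $\mu_j\le\mu_i<\lambda_i$, so the entry becomes $(-1)^m e_m(\alpha_{\mu_j+1},\dots,\alpha_{\lambda_i-1},\beta_i,\dots,\beta_{j-1})$ in only $m-1$ variables (with $m\ge 2$), hence $0$. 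If $i>j$ and $\mu_j\ge\lambda_i$, then $m\le -1$ and $h_m=0$. If $i>j$ and $\mu_j<\lambda_i$, then for every $j\le k<i$ one has $\mu_k\le\mu_j<\lambda_i\le\lambda_{k+1}$, so the monotonicity hypothesis together with $\vr\le\vs$ yields $r_j\le r_{j+1}\le\cdots\le r_i\le s_i$, contradicting $r_j>s_i$.

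The central technical point is the last case, where propagating the partial monotonicity of $(\vr,\vs)$ along the chain $j,j+1,\dots,i$ uses the partition inequalities in an essential way; the preparatory reduction to $\mu_i<\lambda_i$ for all $i$ is what makes this propagation work, by eliminating the otherwise problematic configuration $i<j$ with $\mu_j\ge\lambda_i$ (which would force empty rows in $\lambda/\mu$).
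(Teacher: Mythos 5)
Your proof is correct, but it is organized differently from the paper's. The paper does not reduce to the case $\mu_i<\lambda_i$ for all $i$; instead, for each position $(i,j)$ with $r_j>s_i$ it argues locally that the entry does not contribute to either determinant: for $i>j$ it uses the contrapositive of the monotonicity chain to produce an index $m\in\{j,\dots,i-1\}$ with $\mu_m\ge\lambda_{m+1}$, which makes both matrices block upper triangular across $m$; for $i<j$ it observes that a nonvanishing entry forces $\mu_i=\lambda_i$, and then factors the determinant through the $(i,i)$-entry, which equals $1$. Your version front-loads all of these factorizations into one inductive reduction at the rows with $\mu_k=\lambda_k$, after which the two matrices become \emph{entrywise} equal: the $i<j$ entries vanish by the count of $e_m$ evaluated in $m-1$ variables, and the configuration $i>j$ with $\mu_j<\lambda_i$ and $r_j>s_i$ is outright impossible. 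The underlying computations (the variable count, the chain $r_j\le\cdots\le r_i\le s_i$) are the same as the paper's, but your decomposition is arguably cleaner since it ends with matrix equality rather than a ``does not contribute'' argument. One small point you should make explicit: when you apply the induction hypothesis to the block on rows and columns $\{k+1,\dots,n\}$, the entries carry $B_{i-1}-B_{j-1}$ with the original indices, so after reindexing they are the lemma's entries with $\beta_l$ replaced by $\beta_{l+k}$; the statement (and your proof of it) is insensitive to this relabeling of the formal parameters $\vb$, which is exactly the role played by $\phi_{\vb}$ elsewhere in the paper, but as written the induction hypothesis does not literally apply to that block.
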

\begin{proof}
  Let \( L =(L_{i,j})_{i,j=1}^n \) and \( R =(R_{i,j})_{i,j=1}^n \) be the
  matrices in the left and right sides of the equation, respectively. If \( r_j
  \leq s_i\) for all \(i\) and \(j\), then the matrices \(L\) and \(R\) are the
  same, so it is done. Assume that \(L_{i,j} \neq R_{i,j}\) for some \( i \) and
  \( j \). In this case we must have \(r_j > s_i\) and \(L_{i,j}=0\). Thus it is
  enough to show that the \((i,j)\)-entry \(R_{i,j}\) does not contribute to the
  determinant of the matrix \(R\). We consider the two cases \( i>j \) and \(
    i<j \).
  
    First suppose \( i>j \). Since \(r_i \leq s_i < r_j\), by the condition that
    \( r_k\le r_{k+1} \) whenever \( \mu_{k}<\lambda_{k+1} \), there is at least
    one \(m \in \{j,\dots,i-1\}\) such that \( \mu_m \geq \lambda_{m+1}\). Then
    for integers \( p\) and \(q\) satisfying \(1 \leq p \leq m < q \leq n\), we
    have \(\lambda_q-\mu_p+p-q \leq \lambda_{m+1}-\mu_m+m-(m+1) <0\), which
    implies \( R_{p,q}=0 \). Therefore, \( \det R = \det (R_{k,l})_{k,l=1}^m \det
    (R_{k,l})_{k,l=m+1}^n\). Since \( j\le m<i \), this means that the
    \((i,j)\)-entry \(R_{i,j}\) does not contribute to the determinant of the
    matrix \(R\).

    We now suppose \(i<j\). If \(\mu_j < \lambda_i\), then by the same argument in \eqref{eq:R_ij=0}, we have \( R_{i,j}=0 \). This is a contradiction to the assumption \(L_{i,j}\neq R_{i,j}\), so we must have
  \(\lambda_i \leq \mu_j\). Then, since \( \mu_i \leq \lambda_i \leq \mu_j \leq \mu_i
  \), we also have \(\mu_i=\lambda_i\).
  Thus by~\Cref{lem:det(h)=det*det}, we obtain  \(\det R =\det(R_{k,l})_{k,l=1}^{i-1}
  \det(R_{k,l})_{k,l=i+1}^n\). This means that the \((i,j)\)-entry of
  \(R\) does not contribute to the determinant of the matrix
  \(R\), which completes the proof.
\end{proof}

Our next step is to find a generalization of \eqref{eq:g_row'}. To do this we
need the following definitions.

\begin{defn}
  A \emph{dented partition} is a sequence \( (\lambda_1,\dots,\lambda_n) \) of
  nonnegative integers such that 
  \[
    \lambda_1+1=\dots=\lambda_{k-1}+1=\lambda_k\ge \dots\ge \lambda_n,
  \]
  for some \( 1\le k\le n \). The cell \( (k,\lambda_k) \) is called the
  \emph{minimal cell} of \( \lambda \). Similar to partitions, each \(
  \lambda_i \) of a dented partition \( \lambda=(\lambda_1,\dots,\lambda_n) \)
  is called a \emph{part}. The \emph{Young diagram} of \( \lambda \) is also
  defined as usual. We denote by \( \DPar_n \) the set of all dented partitions
  with at most \( n \) parts. See Figure~\ref{fig:ydp}.
\end{defn}

\begin{figure}
\ytableausetup{boxsize=1.5em}
    \begin{ytableau}
       ~ & ~ & ~ \\ 
       ~ & ~ & ~ \\
       ~ & ~ & ~ & *(gray) \\
       ~ & ~ & ~& ~  \\
       ~
    \end{ytableau}
\caption{The Young diagram of a dented partition \( \lambda=(3,3,4,4,1) \).
 The cell \( (3,4) \) is the minimal cell, which is colored gray.}
\label{fig:ydp}
\end{figure}

Note that a dented partition \( \lambda \)
is a partition if and only if its minimal cell is \( (1,\lambda_1) \). 

\begin{remark}
  The motivation of introducing dented partitions is to construct a marked RPP
  of shape \( \lambda \) by filling a cell one at a time with respect to the
  total order \( \prec \) on the cells defined by \( (i,j) \prec (i',j') \) if
  \( j>j' \) or \( j=j' \) and \( i\le i' \). At each stage of the construction,
  the remaining cells form a dented partition and the next cell to be filled is
  the minimal cell of this dented partition.

  We also note that dented partitions are a special case of
  pseudo-partitions introduced by Aas, Grinberg, and Scrimshaw, who found a
  Jacobi--Trudi-like formula for the generating function for semistandard Young
  tableaux of a pseudo-partition shape with a flag condition in
  \cite[Theorem~5.3]{Aas2020}.
\end{remark}

\begin{defn}\label{defn:MRPPI}
  Let \( \lambda \in \DPar_n \) and \( \mu\in \Par_n \) with \(
  \mu\subseteq\lambda \), and let \( \vr,\vs\in\PP^n \) and \( I\subseteq [n] \). If
  \( \vr\not\le \vs \), we define \( \MRPP^{\row(\vr, \vs)}_I(\lambda/\mu)=\emptyset
  \). If \( \vr\le \vs \), we define \( \MRPP^{\row(\vr, \vs)}_I(\lm) \) to be
  the set of RPPs \( T \) of shape \( \lm \)  satisfying the following conditions.
    \begin{itemize}
    \item For all \( (i,j)\in\lm \), we have \( r_i \le T(i,j) \le s_i \).
\item If \( (k, \lambda_k) \) is the minimal cell of \( \lambda \) and \( k-1\in I \),
  then \( T(k,\lambda_k)\ge s_{k-1} \).
    \item Every entry $T(i,j)$ with $T(i,j)=T(i,j+1)$ may be marked, where for each \( i\in [n] \) we define
  \[
    T(i,\lambda_i+1) =
    \begin{cases}
      \infty & \mbox{if \( i\not\in I \)},\\
      s_i & \mbox{if \( i\in I \).}
    \end{cases}
  \]
    \end{itemize}
For \( T\in \MRPP^{\row(\vr,\vs)}_I(\lm) \), we define
\[
  \wt(T) = \prod_{(i,j)\in\lm} \wt(T(i,j)),
\]
where
\begin{align*}
  \wt(T(i,j)) =
  \begin{cases}
    -\alpha_{j} & \mbox{if $T(i,j)$ is marked,}\\
    \beta_{i-1} & \mbox{if $T(i,j)$ is not marked and $T(i,j)= T(i-1,j)$,}\\
    x_{T(i,j)} & \mbox{otherwise.}
  \end{cases}
\end{align*}
Here, the equality $T(i,j)= T(i-1,j)$ means that their underlying integers are equal.
\end{defn}

Note that in the above definition the assumption \( T(i,\lambda_i+1)=s_i \) for
\( i\in I \) also plays a role in the definition of \( \wt(T) \). 
See Figure~\ref{fig:ydpi}. Note also that
\( \MRPP^{\row(\vr, \vs)}_I(\lm)=\emptyset \) if \( r_i>s_i \) for
some \( i \) (even for the case \( \mu_i=\lambda_i \)). Thus we have
\[
  \MRPP^{\row(\vr, \vs)}_I(\mu/\mu)=
  \begin{cases}
    \emptyset  & \mbox{if \( \vr\not\le \vs \)},\\
    \{\emptyset\}  & \mbox{otherwise.}
  \end{cases}
\]

\begin{figure}
  \ytableausetup{mathmode, boxsize=2em}
    \begin{ytableau}
      \none[1\le] & \none[~] & 1 & 2 & \none[~] & \none[\le3] \\ 
      \none[1\le] & \none[~] & 1 & 3^* & 3 & \none[\le3] \\ 
      \none[2\le] & 2 & 2 & 3 & 4^*  & \none[\le4] \\
      \none[2\le] & 4 & \none[~] & \none[~] & \none[~] & \none[\le5] 
    \end{ytableau}
\caption{An example of  \( T\in\MRPP_I^{\row(\vr,\vs)}(\lm) \),
where \( \lambda=(3,4,4,1),\mu=(1,1),I=\{ 1,3 \}, \vr=(1,1,2,2), \) 
and \( \vs=(3,3,4,5) \). Note that 
\( T(2,4)=3 \) cannot be marked because \( 2\notin I \) and its contribution
to the weight is \( \beta_1 \) since \( 1\in I \) and \( T(2,4)=T(1,4)=3 \). Thus we have
\( \wt(T)= x_1x_2^3x_4(-\alpha_3)(-\alpha_4)\beta_1^2\beta_2 \). }
\label{fig:ydpi}
\end{figure}

The following proposition is the promised generalization of \eqref{eq:g_row'}.
 
\begin{prop}\label{prop:MRPPI}
  Let \( \lambda\in\DPar_n,\mu\in\Par_n, \vr,\vs\in \PP^n \) with \(
  \mu\subseteq\lambda \) such that \( r_i\le r_{i+1} \) and \( s_i\le s_{i+1} \)
  whenever \( \mu_{i}<\lambda_{i+1} \). Suppose that \( (k,\lambda_k) \) is the
  minimal cell of \( \lambda \) and \( I = \{1,2,\dots,p\} \) or \( I =
  \{1,2,\dots,p\}\setminus\{k\} \) for some integer \( k\le p\le \ell(\lambda)
  \) satisfying
  \[
    \lambda_1+1=\dots=\lambda_{k-1}+1=\lambda_k=\lambda_{k+1}=\dots=\lambda_p.
  \]
  Then
  \begin{equation}\label{eq:refined_conj}
    \sum_{T\in\MRPP^{\row(\vr, \vs)}_I(\lm)} \wt(T) = 
    \det\left( h_{\lambda,\mu}^{\vr,\vs}(I;i,j)  \right)_{i,j=1}^n,
  \end{equation} 
  where
  \[
      h_{\lambda,\mu}^{\vr,\vs}(I;i,j) =
      \chi(r_j\le s_i)   h_{\lambda_i-\mu_j-i+j}
      [X_{[r_j,s_i]}-A_{\lambda_i-1+\chi(i\in I)}+A_{\mu_j}+B_{i-1} -
      B_{j-1}].
    \]
\end{prop}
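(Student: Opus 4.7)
The plan is to prove~\eqref{eq:refined_conj} by strong induction on $|\lambda/\mu|$. In the base case $\lambda=\mu$, the set $\MRPP^{\row(\vr,\vs)}_I(\lambda/\mu)$ contains only the empty tableau when $\vr\le\vs$, and is empty otherwise, so the LHS is $1$ or $0$ accordingly. The RHS is an upper-triangular determinant whose diagonal entries can be evaluated directly via a plethystic expansion analogous to the computation in the proof of Lemma~\ref{lem:G=1}, producing the same value.

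For the inductive step, assume $\mu\subsetneq\lambda$, and let $(k,c)$ with $c=\lambda_k$ be the minimal cell of $\lambda$. If $\mu_k=\lambda_k$, the minimal cell lies outside $\lambda/\mu$, and I would replace $\lambda$ by the smaller dented partition $\lambda-\epsilon_k$: the MRPP set is unaffected, while on the determinant side Lemma~\ref{lem:det(h)=det*det} together with suitable row operations on the $k$th row reduces the determinant to the corresponding one for $\lambda-\epsilon_k$. The more substantive case is $\mu_k<\lambda_k$. Here I would split the LHS according to the entry at $(k,c)$: an unmarked value $v$ with $r_k\le v\le s_k$ contributes $x_v$ (since $(k-1,c)\notin\lambda$, no $\beta_{k-1}$-term arises), and a marked entry, which forces $v=s_k$ and requires $k\in I$, contributes $-\alpha_c$. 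After the minimal cell is filled, the remainder is a marked RPP of shape $(\lambda-\epsilon_k)/\mu$ with updated $\vs$ and $I$, yielding a recurrence in smaller instances of the proposition.

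The matching recurrence on the RHS comes from manipulating the $k$th row of the matrix $(h^{\vr,\vs}_{\lambda,\mu}(I;i,j))_{i,j}$. Applying Lemma~\ref{lem:h_m[Z]} with $z=x_{s_k}$ inside the plethystic bracket, and again with $z=\alpha_c$, splits each entry in the $k$th row into three parts: a ``body'' piece with $s_k$ removed from $X_{[r_k,s_k]}$, an $x_{s_k}$-piece, and an $\alpha_c$-piece. By linearity of the determinant in its $k$th row this yields three determinants, each matching exactly one of the pieces produced by the LHS decomposition after the appropriate change of $\vs$ and $I$.

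The main obstacle will be the bookkeeping of how the pair $(\lambda,I)$ updates across the recursion. When $k<p$, removing $(k,c)$ shifts the minimal cell to $(k+1,c)$, and depending on whether $T(k,c)=s_k$ or $T(k,c)<s_k$ the index $k$ enters or leaves $I$ with the new $s_k$-value rewritten accordingly; when $k=p$, removing $(k,c)$ moves the minimal cell all the way back to $(1,c-1)$, changing the column and re-partitioning the restricted form of $I$. The restriction in the hypothesis that $I$ has one of the two forms $\{1,\ldots,p\}$ or $\{1,\ldots,p\}\setminus\{k\}$ appears tailored exactly so that every pair $(\lambda-\epsilon_k,I')$ produced by the recursion again satisfies one of these two forms; verifying this closure carefully is the technical heart of the proof.
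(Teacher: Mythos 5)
Your decomposition of the left-hand side at the minimal cell $(k,c)$ is where the argument breaks. You assert that an unmarked entry $v$ contributes $x_v$ for every $r_k\le v\le s_k$ ``since $(k-1,c)\notin\lambda$, no $\beta_{k-1}$-term arises.'' But the whole point of the set $I$ in Definition~\ref{defn:MRPPI} is that $(k-1,c)=(k-1,\lambda_{k-1}+1)$ is a \emph{virtual} cell carrying the value $T(k-1,c)=s_{k-1}$ whenever $k-1\in I$, which is automatic here when $k\ge2$ because $I$ contains $\{1,\dots,p\}\setminus\{k\}$ and $k-1<k\le p$. Consequently, when $\lambda$ is not a partition the entry at $(k,c)$ is bounded below by $s_{k-1}$, and if it equals $s_{k-1}$ its weight is $\beta_{k-1}$, not $x_{s_{k-1}}$. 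In the extreme case $s_{k-1}=s_k$ the entry is forced to equal $s_k$ and contributes only $\beta_{k-1}$ or $-\alpha_{\lambda_k}$; this is exactly the first recursion of Lemma~\ref{lem:rec_main1}, which your decomposition cannot produce. This is the very reason the statement carries the set $I$ through the induction at all.

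There is also a structural mismatch: your left-hand decomposition is a sum over all values $v\in[r_k,s_k]$ of the minimal cell, i.e.\ $s_k-r_k+2$ terms, while your right-hand manipulation (two applications of Lemma~\ref{lem:h_m[Z]} plus row linearity) produces only three determinants, so the two sides do not match term by term. The correct recursion peels off only the top value: either $T(k,c)=s_k$ (remove the cell and put $k$ into $I$, keeping $\vs$) or $T(k,c)\le s_k-1$ (keep the cell, replace $\vs$ by $\vs-\epsilon_k$ and delete $k$ from $I$). The second branch leaves $|\lm|$ unchanged, so your induction on $|\lm|$ alone does not terminate; one must induct on the pair $\bigl(\sum_i(s_i-r_i),\,|\lm|\bigr)$. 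Finally, before any of this one needs the block-triangular reduction at indices $t$ with $\mu_t\ge\lambda_{t+1}$ (Lemma~\ref{lem:rec1-dual}): without it you cannot assume $\vr,\vs\in\RPar_n$, which is needed both for the vanishing arguments on the determinant side and for the recursion to preserve the hypotheses. Your closing observation that the two allowed forms of $I$ must be shown to be closed under the recursion is correct and is indeed a point that has to be checked, but it does not repair the issues above.
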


Observe that \eqref{eq:g_row'} is the special case, \( \lambda\in\Par_n \) and
\( I=\emptyset \), of Proposition~\ref{prop:MRPPI}. For the rest of this
subsection we prove this proposition by finding the same recursions and initial
conditions for both sides of \eqref{eq:refined_conj}. Let
\begin{align*}
  M(\lambda,\mu,\vr,\vs,I) &= \sum_{T\in\MRPP^{\row(\vr, \vs)}_I(\lm)} \wt(T),\\
  D(\lambda,\mu,\vr,\vs,I) &=\det\left( h_{\lambda,\mu}^{\vr,\vs}(I;i,j)  \right)_{i,j=1}^n.
\end{align*}

The following two lemmas give the same initial conditions for \(
M(\lambda,\mu,\vr,\vs,I) \) and \( D(\lambda,\mu,\vr,\vs,I) \).

\begin{lem}\label{lem:LHS=RHS=0}
  Let \( \lambda\in\DPar_n,\mu\in\Par_n,  \vr, \vs\in\RPar_n\) with \( \mu\subseteq\lambda \).
  If \( \vr\not\le\vs \), then
  \[
    M(\lambda,\mu,\vr,\vs,I) = D(\lambda,\mu,\vr,\vs,I)= 0.
  \]  
\end{lem}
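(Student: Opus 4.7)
The equality $M(\lambda,\mu,\vr,\vs,I)=0$ is immediate from Definition~\ref{defn:MRPPI}, which stipulates $\MRPP^{\row(\vr,\vs)}_I(\lm)=\emptyset$ whenever $\vr\not\le\vs$, so the generating sum is empty.

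For the determinantal side, the plan is to exhibit a large zero block in the matrix $(h_{\lambda,\mu}^{\vr,\vs}(I;i,j))_{i,j=1}^{n}$ forced by the $\chi(r_j\le s_i)$ factor. Since $\vr,\vs\in\RPar_n$ are weakly increasing and $\vr\not\le\vs$, there exists an index $k$ with $r_k>s_k$. Then for every pair $(i,j)$ with $1\le i\le k$ and $k\le j\le n$, weak monotonicity gives
\[
  r_j \ge r_k > s_k \ge s_i,
\]
so $\chi(r_j\le s_i)=0$ and hence $h_{\lambda,\mu}^{\vr,\vs}(I;i,j)=0$. This is a rectangular block of zeros occupying $k$ rows and $n-k+1$ columns.

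Finally, I would invoke the standard linear-algebra fact: if an $n\times n$ matrix has a zero submatrix whose row set and column set together have cardinality strictly greater than $n$, its determinant vanishes. Here $k+(n-k+1)=n+1>n$, so $\det(h_{\lambda,\mu}^{\vr,\vs}(I;i,j))_{i,j=1}^{n}=0$, i.e.\ $D(\lambda,\mu,\vr,\vs,I)=0$. There is no real obstacle; the only point worth noting is that we must use the fact that $\vr$ and $\vs$ are reverse partitions (weakly increasing) to promote the single violation $r_k>s_k$ to the full rectangular block of zeros required for the determinant argument.
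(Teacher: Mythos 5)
Your argument is correct and coincides with the paper's own proof: the emptiness of $\MRPP^{\row(\vr,\vs)}_I(\lm)$ gives $M=0$, and the weak monotonicity of $\vr$ and $\vs$ promotes a single violation $r_k>s_k$ to the vanishing of $h_{\lambda,\mu}^{\vr,\vs}(I;i,j)$ for all $1\le i\le k$ and $k\le j\le n$, which kills the determinant. The paper uses exactly the same zero-block criterion (stated explicitly in its proof of Lemma~\ref{lem:rec4}), so there is nothing to add.
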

\begin{proof}
  By assumption there is an integer \( 1\le k\le n \) with \( r_k>s_k \). By
  definition we have \( M(\lambda,\mu,\vr,\vs,I) =0 \). To show \(
  D(\lambda,\mu,\vr,\vs,I) =0 \) it suffices to show that \(
  h_{\lambda,\mu}^{\vr,\vs}(I;i,j)=0 \) for all \( 1\le i\le k \) and \( k\le
  j\le n \). Since \( \vr,\vs\in\RPar_n \), we have \( r_j\ge r_k>s_k\ge s_i \),
  and therefore \( \chi(r_j\le s_i)=0 \). This implies \(
  h_{\lambda,\mu}^{\vr,\vs}(I;i,j)=0 \).
\end{proof}

\begin{lem}\label{lem:LHS=RHS=1}
Let \( \mu\in\Par_n, \vr, \vs\in \PP^n\).
 If \( \vr\le \vs \), then
  \[
    M(\mu,\mu,\vr,\vs,I) =D(\mu,\mu,\vr,\vs,I)= 1.
  \]
  
\end{lem}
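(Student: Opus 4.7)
The plan is to verify both equalities by direct inspection, exploiting the fact that the skew shape $\mu/\mu$ is empty.

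For the combinatorial side $M(\mu,\mu,\vr,\vs,I) = 1$, I would simply observe that since $\lambda/\mu = \mu/\mu$ is empty and $\vr \le \vs$, the set $\MRPP^{\row(\vr, \vs)}_I(\mu/\mu)$ consists of exactly the empty tableau (cf.\ the remark following Definition~\ref{defn:MRPPI}), whose weight is the empty product $1$.

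For the determinantal side $D(\mu,\mu,\vr,\vs,I) = 1$, my plan is to show that the matrix $\left(h_{\mu,\mu}^{\vr,\vs}(I;i,j)\right)_{i,j=1}^n$ is upper triangular with $1$'s on the diagonal. For $i > j$ I would argue that
\[
\mu_i - \mu_j - i + j \le 0 - (i-j) < 0
\]
because $\mu$ is weakly decreasing, so the $(i,j)$-entry vanishes by the convention $h_m = 0$ for $m < 0$. For $i = j$, the factor $\chi(r_j \le s_i)$ equals $1$ (since $\vr \le \vs$), and the subscript of $h$ is $0$. Whether $i \in I$ or not, the plethystic argument simplifies to a sum of single variables and/or $\alpha$'s (the telescoping $-A_{\mu_i-1+\chi(i\in I)} + A_{\mu_i}$ and the cancellation $B_{i-1} - B_{i-1} = 0$), and $h_0$ of any expression is $1$. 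Therefore the diagonal entries are all $1$ and the determinant is $1$.

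No step looks difficult; the only thing to double-check is the evaluation at $i=j$ in the two subcases $i\in I$ and $i\notin I$, which reduces to the triviality $h_0[\,\cdot\,]=1$.
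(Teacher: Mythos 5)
Your proposal is correct and matches the paper's argument: the combinatorial side is the empty tableau of weight $1$, and the determinantal side follows because the matrix $\bigl(h_{\mu,\mu}^{\vr,\vs}(I;i,j)\bigr)_{i,j=1}^n$ is upper unitriangular (entries with $i>j$ vanish since $\mu_i-\mu_j-i+j<0$, and diagonal entries are $\chi(r_i\le s_i)\,h_0[\cdot]=1$). The paper states this with less detail, but the reasoning is the same.
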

\begin{proof}
  By definition we have \( M(\mu,\mu,\vr,\vs,I) =1 \). The other equation \(
  D(\mu,\mu,\vr,\vs,I) =1 \) can easily be proved by observing that the
  matrix in the definition \( D(\mu,\mu,\vr,\vs,I) =\det
  (h_{\mu,\mu}^{\vr,\vs}(I;i,j)) \) is upper unitriangular.
\end{proof}

The following two lemmas give the same recursions for \(
M(\lambda,\mu,\vr,\vs,I) \) and \( D(\lambda,\mu,\vr,\vs,I) \).

\begin{lem}\label{lem:rec1-dual}
  Let \( \lambda\in\DPar_n,\mu\in\Par_n,\vr,\vs\in\PP^n \) with \( \mu\subseteq\lambda \).
  Suppose that $1\le t\le n-1$ is an integer such that $\mu_t\ge\lambda_{t+1}$.
  Let \( \phi_{\vb} \) be the map defined by shifting \( \beta_i \) to \( \beta_{i+1}
  \) for \( i\ge1 \). Then 
  for \( F\in\{D,M\} \), we have
  \[
    F(\lambda,\mu,\vr,\vs,I)
    = F(\lambda^{(1)},\mu^{(1)},\vr^{(1)},\vs^{(1)},I^{(1)})
    \phi_{\vb}^t(F(\lambda^{(2)},\mu^{(2)},\vr^{(2)},\vs^{(2)},I^{(2)})),
  \]
    where \( \gamma^{(1)} = (\gamma_1,\dots,\gamma_t) \) and
    \( \gamma^{(2)} = (\gamma_{t+1},\dots,\gamma_n) \)
    for each \( \gamma\in \{\lambda,\mu,\vr,\vs\} \),
    and \( I^{(1)}= I\cap\{1,2,\dots,t\} \)
    and \( I^{(2)}= I\cap\{t+1,t+2,\dots,n\} \).
\end{lem}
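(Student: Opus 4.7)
The plan is to mimic the proof of Lemma~\ref{lem:rec1}: handle \( F = M \) by a direct combinatorial decomposition of \( \lm \) and \( F = D \) by a block upper triangular argument on the matrix in the definition of \( D \), both driven by the hypothesis \( \mu_t \ge \lambda_{t+1} \).

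For \( F = M \), the condition \( \mu_t \ge \lambda_{t+1} \) forces the skew shape \( \lm \) to split into two disjoint pieces whose column supports are disjoint: the upper piece (rows \( 1, \dots, t \)) sits in columns \( > \mu_t \), while the lower piece (rows \( t+1, \dots, n \)) sits in columns \( \le \lambda_{t+1} \le \mu_t \). In particular, no cell of the lower piece lies directly below a cell of the upper piece, so the ``equal to the entry above'' contribution \( \beta_{i-1} \) never crosses the split. A marked RPP of shape \( \lm \) thus decomposes into its restrictions to the two pieces, and its weight factors accordingly. After re-indexing the lower rows from \( \{t+1, \dots, n\} \) to \( \{1, \dots, n-t\} \), each occurrence of \( \beta_{i-1} \) in the lower piece becomes \( \beta_{(i-t)-1+t} \), which is precisely the action of \( \phi_{\vb}^t \); column indices are unchanged under the row re-indexing, so the \( -\alpha_j \) and \( x_{T(i,j)} \) contributions split without any shift. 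Finally the flag data \( (\vr, \vs) \) and the marking data \( I \) restrict coordinatewise to the two pieces, with \( I^{(2)} \) implicitly re-indexed to a subset of \( \{1, \dots, n-t\} \).

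For \( F = D \), I would show that the matrix \( (h_{\lambda,\mu}^{\vr,\vs}(I;i,j))_{i,j=1}^n \) is block upper triangular with diagonal blocks of sizes \( t \times t \) and \( (n-t) \times (n-t) \). For \( t+1 \le i \le n \) and \( 1 \le j \le t \), the chain \( \mu_j \ge \mu_t \ge \lambda_{t+1} \ge \lambda_i \) together with \( i > j \) yields \( \lambda_i - \mu_j - i + j \le -1 \); since the plethystic argument of \( h_{\lambda_i-\mu_j-i+j}[\cdots] \) is a \( \ZZ \)-linear combination of variables with no \( \ominus \) involved, the expansion formulas of Section~\ref{sec:preliminary} give \( h_m[\cdot] = 0 \) for \( m < 0 \), and the \((i,j)\)-entry vanishes. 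The determinant of the resulting block upper triangular matrix is the product of the two diagonal block determinants. The upper block is literally the matrix for \( D(\lambda^{(1)}, \mu^{(1)}, \vr^{(1)}, \vs^{(1)}, I^{(1)}) \). For the lower block, writing \( i = i'+t \) and \( j = j'+t \), I verify
\[
  \phi_{\vb}^t(B_{i'-1} - B_{j'-1}) = (B_{i-1} - B_t) - (B_{j-1} - B_t) = B_{i-1} - B_{j-1},
\]
while the \( X \)-, \( A \)-data in the entry are fixed by \( \phi_{\vb}^t \). Because \( \phi_{\vb}^t \) is a ring homomorphism, it commutes with the determinant, so the lower block contributes \( \phi_{\vb}^t(D(\lambda^{(2)}, \mu^{(2)}, \vr^{(2)}, \vs^{(2)}, I^{(2)})) \), as required.

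The main hurdle is bookkeeping rather than substance: one must check that \( \lambda^{(1)} \) and \( \lambda^{(2)} \) are again dented partitions (which follows by inspecting where the minimal cell of \( \lambda \) lies relative to row \( t \)), confirm that the hypotheses of Proposition~\ref{prop:MRPPI} transfer to each piece, and make the implicit re-indexing of \( I^{(2)} \) explicit. No plethystic identity beyond those already used in the proof of Lemma~\ref{lem:rec1} is needed.
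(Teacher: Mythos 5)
Your overall strategy is the same as the paper's: for \( F=M \) the hypothesis \( \mu_t\ge\lambda_{t+1} \) makes the column supports of rows \( 1,\dots,t \) and rows \( t+1,\dots,n \) disjoint, so the tableaux and their weights factor; for \( F=D \) one shows the matrix \( (h_{\lambda,\mu}^{\vr,\vs}(I;i,j)) \) is block upper triangular because the entries with \( t+1\le i\le n \) and \( 1\le j\le t \) have negative subscript and hence vanish (the plethystic argument involves no \( \ominus \)), and then one checks that \( \phi_{\vb}^t \) converts the lower diagonal block into the matrix for the second piece. Your computation \( \phi_{\vb}^t(B_{i'-1}-B_{j'-1})=B_{i-1}-B_{j-1} \) is exactly the verification the paper leaves implicit.

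There is, however, one step that fails as written. You derive \( \lambda_i-\mu_j-i+j\le-1 \) from the chain \( \mu_j\ge\mu_t\ge\lambda_{t+1}\ge\lambda_i \), but the last inequality \( \lambda_{t+1}\ge\lambda_i \) is false in general: \( \lambda \) is a \emph{dented} partition, so if its minimal cell lies in a row \( k>t+1 \), then \( \lambda_k=\lambda_{t+1}+1>\lambda_{t+1} \). The conclusion is still correct, but the argument must be split as in the paper: for \( i=t+1 \) one has \( \lambda_{t+1}-\mu_j-(t+1)+j\le\lambda_{t+1}-\mu_t-1\le-1 \), while for \( i\ge t+2 \) one uses the dented-partition property \( \lambda_b\le\lambda_a+1 \) for \( a<b \) to get \( \lambda_i-\mu_j-i+j\le(\lambda_{t+1}+1)-\mu_t-2\le-1 \). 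Since you already invoke the dentedness of \( \lambda \) elsewhere (to check that \( \lambda^{(1)} \) and \( \lambda^{(2)} \) are again dented partitions), this is a local repair rather than a change of strategy, but as stated the inequality you rely on does not hold.
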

\begin{proof}
  If \( F=M \), the statement follow immediately from the definition
  of \( M(\lambda,\mu,\vr,\vs,I) \). For the case \( F=D \), since 
  \begin{align*}
    D(\lambda^{(1)},\mu^{(1)},\vr^{(1)},\vs^{(1)},I^{(1)}) &=\det\left( h_{\lambda,\mu}^{\vr,\vs}(I;i,j)  \right)_{i,j=1}^t,\\
    \phi_{\vb}^t(D(\lambda^{(2)},\mu^{(2)},\vr^{(2)},\vs^{(2)},I^{(2)}))
    &=\det\left( h_{\lambda,\mu}^{\vr,\vs}(I;i,j)  \right)_{i,j=t+1}^n,
  \end{align*}
  it suffices to show that \(
  h_{\lambda,\mu}^{\vr,\vs}(I;i,j)=0 \) for all \( t+1\le i\le n \) and \( 1\le
  j\le t \).
  Suppose \( 1\le j\le t \).
  If \( i=t+1 \), then 
  \[
    \lambda_i-i-\mu_j+j \le \lambda_{t+1}-(t+1)-\mu_t+t\le -1,
  \]
  and therefore \(h_{\lambda,\mu}^{\vr,\vs}(I;i,j)=0 \). If \( t+2\le i\le n \),
  since \( \lambda_b\le \lambda_a+1 \) for any \( 1\le a<b\le n \), we have
  \[
    \lambda_i-i-\mu_j+j \le (\lambda_{t+1}+1)-(t+2)-\mu_t+t\le-1.
  \]
  Thus \(h_{\lambda,\mu}^{\vr,\vs}(I;i,j)=0 \) and we obtain the desired identity.
\end{proof}

\begin{lem}\label{lem:rec_main1}
  Let \( \lambda\in\DPar_n \) be a dented partition with the minimal cell
  \( (k,\lambda_k) \) and let \( \mu\in\Par_n \) be a partition with
  \( \mu\subseteq\lambda\setminus\{(k,\lambda_k)\} \).
  Let \( \vr,\vs\in \RPar_n \) with \( \vr\le\vs \).
  Suppose that \( I = \{1,2,\dots,p\} \) or \( I =
  \{1,2,\dots,p\}\setminus\{k\} \) for some integer \( k\le p\le \ell(\lambda)
  \) satisfying
  \[
    \lambda_1+1=\dots=\lambda_{k-1}+1=\lambda_k=\lambda_{k+1}=\dots=\lambda_p.
  \]
   If \( \lambda \) is not a partition and \( s_{k-1}=s_k \), then
   for \( F\in\{D,M\} \),
   \[
     F(\lambda,\mu,\vr,\vs,I)
     =(\beta_{k-1}-\chi(k\in I) \alpha_{\lambda_k}) F(\lambda-\epsilon_k,\mu,\vr,\vs,I\cup\{k\}).
   \]
   If \( \lambda \) is a partition or \( s_{k-1}<s_k \), then
   for \( F\in\{D,M\} \),
   \[
     F(\lambda,\mu,\vr,\vs,I)
     =(x_{s_k}-\chi(k\in I) \alpha_{\lambda_k}) F(\lambda-\epsilon_k,\mu,\vr,\vs,I\cup\{k\})
     +F(\lambda,\mu,\vr,\vs-\epsilon_k,I\setminus\{k\}).
   \]
 \end{lem}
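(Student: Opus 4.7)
The plan is to verify the recursions separately for the combinatorial quantity $M$ and the determinantal quantity $D$. The key structural fact driving both arguments is that under the hypothesis on $I$, we always have $k-1\in I$ whenever $k\ge 2$; hence the virtual cell $T(k-1,\lambda_{k-1}+1)=T(k-1,\lambda_k)=s_{k-1}$ sits directly above the minimal cell $(k,\lambda_k)$ and plays a dual role, both as an upper bound in the column (via the RPP constraint) and as the above-neighbor in the weight rule for $T(k,\lambda_k)$.

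For $F=M$, I would argue combinatorially by analyzing $T(k,\lambda_k)$. In Case 1 with $s_{k-1}=s_k$, the virtual cell forces $T(k,\lambda_k)\ge s_{k-1}=s_k$, and combined with the flag bound $T(k,\lambda_k)\le s_k$ this pins down $T(k,\lambda_k)=s_k$. Its weight is $\beta_{k-1}$ if unmarked (since the value matches the above virtual cell) or $-\alpha_{\lambda_k}$ if marked (allowed iff $k\in I$, so the right virtual cell equals $s_k$). Stripping the cell and adjoining $k$ to $I$ yields the bijection with $\MRPP^{\row(\vr,\vs)}_{I\cup\{k\}}(\lambda-\epsilon_k/\mu)$. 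In Case 2, I partition by whether $T(k,\lambda_k)=s_k$ or $T(k,\lambda_k)<s_k$: the first class contributes the first summand with weight $x_{s_k}$ unmarked (since either $k=1$ or $s_{k-1}<s_k$ forces strict inequality with the above virtual cell) or $-\alpha_{\lambda_k}$ marked, while the second class is in weight-preserving bijection with $\MRPP^{\row(\vr,\vs-\epsilon_k)}_{I\setminus\{k\}}(\lambda/\mu)$, because tightening the flag by one and removing $k$ from $I$ turns the right virtual cell into $\infty$ and encodes the constraint $T(k,\lambda_k)\le s_k-1$.

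For $F=D$, I would exploit linearity of the determinant in the $k$th row: the three matrices in each recursion agree outside row $k$, so the identity reduces to showing a single auxiliary determinant $\det M'$ vanishes. In Case 1, using Lemma~\ref{lem:h_m[Z]} (together with $A_{\lambda_k}=A_{\lambda_k-1}+\alpha_{\lambda_k}$ when $k\in I$), I would simplify the modified row $k$ of $M'$ to $\chi(r_j\le s_k)\,h_{\lambda_k-\mu_j-k+j}[X_{[r_j,s_k]}-A_{\lambda_k-1}+A_{\mu_j}+B_{k-2}-B_{j-1}]$; under the hypotheses $s_{k-1}=s_k$, $\lambda_{k-1}=\lambda_k-1$, and $k-1\in I$, this coincides with row $k-1$ of the original matrix, whence $\det M'=0$. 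In Case 2, a parallel application of Lemma~\ref{lem:h_m[Z]} shows the $(k,j)$-entry of $M'$ vanishes unless $r_j=s_k$, and in the exceptional case it simplifies to $h_{\lambda_k-\mu_j-k+j}[A_{\mu_j}-A_{\lambda_k-1}+B_{k-1}-B_{j-1}]$. The hypothesis ($\lambda$ a partition, forcing $k=1$; or $s_{k-1}<s_k$) combined with the monotonicity of $\vr,\vs$ forces $j\ge k$ for any surviving column, so $\mu_j\le\lambda_k-1$ and this entry evaluates to $(-1)^m e_m$ at a sum with only $m-1$ variables, hence zero. Thus row $k$ of $M'$ is identically zero and $\det M'=0$.

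The main obstacle will be the Case 2 determinantal computation: after the Lemma~\ref{lem:h_m[Z]} reduction isolates the residual entries at columns $j$ with $r_j=s_k$, one must carefully use the monotonicity of $\vr,\vs$ (together with $s_{k-1}<s_k$ or $k=1$) to localize these columns to $j\ge k$, and then the variable-counting argument delivers the vanishing. By comparison, the combinatorial argument for $M$ and the Case 1 calculation for $D$ are quite direct once the dual role of the virtual cell above $(k,\lambda_k)$ is recognized.
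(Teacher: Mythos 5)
Your proposal is correct and follows essentially the same route as the paper: the combinatorial statement for $M$ is handled by analyzing the entry of the minimal cell $(k,\lambda_k)$ against its virtual neighbors (which the paper declares immediate from the definition), and the statement for $D$ is reduced via linearity in the $k$th row to the entrywise identities $h_{\lambda,\mu}^{\vr,\vs}(I;k,j)=h_{\lambda,\mu}^{\vr,\vs}(I;k-1,j)+(\beta_{k-1}-\chi(k\in I)\alpha_{\lambda_k})h_{\lambda-\epsilon_k,\mu}^{\vr,\vs}(I\cup\{k\};k,j)$ in the first case and to the vanishing of $h_m[Z-x_{s_k}]$ when $r_j=s_k$ in the second, exactly as in the paper's proof. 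The case split $j<k$ versus $j\ge k$, the use of $\vr\in\RPar_n$ to exclude $j<k$, and the variable-count argument showing $e_m$ of $m-1$ variables vanishes all coincide with the paper's computation.
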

\begin{proof}
  We first consider the case \( F=M \). For the first statement, suppose that \( \lambda \) is
  not a partition and \( s_{k-1}=s_k \). Let
  \( T\in\MRPP^{\row(\vr, \vs)}_I(\lm) \). Note that since
  \( k-1\in I \), we have \( T(k-1,\lambda_{k-1}+1) = s_{k-1} \) and
  \( s_{k-1}\le T(k,\lambda_k)\le s_k \). Thus
  \( T(k-1,\lambda_{k}) = T(k-1,\lambda_{k-1}+1) = s_{k-1}=s_k \) and
  \( T(k,\lambda_k)=s_k \). This implies that
  \( \wt(T(k,\lambda_k)) \) is \( -\alpha_{\lambda_k} \) or \( \beta_{k-1} \) if
  \( k\in I \) and \( \beta_{k-1} \) otherwise. Let \( T' \) be the
  tableau obtained from \( T \) by removing \( T(k,\lambda_k) \). Then
  \( T\mapsto T' \) is a bijection from \( M(\lambda,\mu,\vr,\vs,I) \)
  to \( M(\lambda-\epsilon_k,\mu,\vr,\vs,I\cup \{k\}) \) such that
  \( \wt(T)=\wt(T')\cdot (\beta_{k-1}-\chi(k\in I) \alpha_{\lambda_k})
  \). Therefore,
   \[
     M(\lambda,\mu,\vr,\vs,I)
     =(\beta_{k-1}-\chi(k\in I) \alpha_{\lambda_k}) M(\lambda-\epsilon_k,\mu,\vr,\vs,I\cup\{k\}),
   \]
   which is the first statement. The second statement can be proved
   similarly with two cases \( T(k,\lambda_k)=s_k \) and
   \( T(k,\lambda_k)<s_k \), which give
   \( (x_{s_k}-\chi(k\in I) \alpha_{\lambda_k}) F(\lambda-\epsilon_k,\mu,\vr,\vs,I\cup\{k\}) \)
   and \( F(\lambda,\mu,\vr,\vs-\epsilon_k,I\setminus\{k\}) \), respectively.

   \medskip
Now we consider the case \( F=D \). Let
  \[
    H = \left( h_{\lambda,\mu}^{\vr,\vs}(I;i,j)  \right)_{i,j=1}^n.
  \]

  For the first statement suppose that \( \lambda \) is not a partition and \(
  s_{k-1}=s_k \). We claim that for all \( 1\le j\le n \),
  \begin{equation}\label{eq:hIkj-claim}
    h_{\lambda,\mu}^{\vr,\vs}(I;k,j) = h_{\lambda,\mu}^{\vr,\vs}(I;k-1,j) 
    + (\beta_{k-1}-\chi(k\in I) \alpha_{\lambda_k})h_{\lambda-\epsilon_k,\mu}^{\vr,\vs}(I\cup\{k\};k,j). 
  \end{equation}
  Note that by subtracting the \( (k-1) \)st row from the \( k \)th row in \( H
  \), the claim implies the first statement. Thus it suffices to prove
  \eqref{eq:hIkj-claim}.
  
  For brevity, let 
  \begin{equation}
    \label{eq:mZ}
     m=\lambda_k-\mu_j+j-k, \qquad
    Z=X_{[r_j,s_k]}-A_{\lambda_k-1}+A_{\mu_j}+B_{k-1}-B_{j-1}.     
  \end{equation}
Then
\begin{align}
  \notag
  h_{\lambda,\mu}^{\vr,\vs}(I;k,j)
  &= \chi(r_j\le s_k)  h_{\lambda_k-\mu_j+j-k}
  [X_{[r_j,s_k]}-A_{\lambda_k-1+\chi(k\in I)}+A_{\mu_j}+B_{k-1} - B_{j-1}]\\
  \label{eq:hIkj}
  &= \chi(r_j\le s_k) h_m[Z-\chi(k\in I)\alpha_{\lambda_k}],
\end{align}
where the second equality follows from \( A_{\lambda_k-1+\chi(k\in I)} =
A_{\lambda_k-1}+\chi(k\in I)\alpha_{\lambda_k}\).
By Lemma~\ref{lem:h_m[Z]},
\begin{align}
  \notag
  h_m[Z-\chi(k\in I)\alpha_{\lambda_k}]
  &= h_m[Z] - \chi(k\in I)\alpha_{\lambda_k}h_{m-1}[Z]\\
  \label{eq:hIkj'}
  &= h_m[Z-\beta_{k-1}] +\beta_{k-1}h_{m-1}[Z]- \chi(k\in I)\alpha_{\lambda_k}h_{m-1}[Z].
\end{align}
On the other hand, since \( \lambda_{k-1}=\lambda_k-1 \) and \( k-1\in I \), 
  \begin{align}
    \notag
    h_{\lambda,\mu}^{\vr,\vs}(I;k-1,j)
    \notag
    &= \chi(r_j\le s_{k-1})  h_{\lambda_{k-1}-\mu_j+j-(k-1)}
      [X_{[r_j,s_{k-1}]}-A_{\lambda_{k-1}}+A_{\mu_j}+B_{k-2} - B_{j-1}]\\
    \notag
    &= \chi(r_j\le s_k)  h_{\lambda_k-\mu_j+j-k}
      [X_{[r_j,s_k]}-A_{\lambda_k-1}+A_{\mu_j}+B_{k-2} - B_{j-1}]\\
    \label{eq:hIk-1j}
    &= \chi(r_j\le s_k)  h_{m}[Z-\beta_{k-1}].
  \end{align}
  Since
  \( h_{\lambda-\epsilon_k,\mu}^{\vr,\vs}(I\cup\{k\};k,j) =  \chi(r_j\le s_k)  h_{m-1}[Z] \),
  \eqref{eq:hIkj}, \eqref{eq:hIkj'}, and \eqref{eq:hIk-1j} give the claim
  \eqref{eq:hIkj-claim}. This shows the first statement.

  For the second statement suppose that \( \lambda \) is a partition or \( s_{k-1}<s_k \).
  We claim that for all \( 1\le j\le n \),
  \begin{equation}\label{eq:hIkj-claim2}
    h_{\lambda,\mu}^{\vr,\vs}(I;k,j) = h_{\lambda,\mu}^{\vr,\vs-\epsilon_k}(I\setminus\{k\};k,j)
      + (x_{s_k}-\chi(k\in I)\alpha_{\lambda_k}) h_{\lambda-\epsilon_k,\mu}^{\vr,\vs}(I\cup\{k\};k,j).
  \end{equation}
  Note that by the linearity of the determinant in the \( k \)th row of \( H \), the claim implies
  the second statement. Thus it suffices to prove \eqref{eq:hIkj-claim2}.

  We again use the notation \( m \) and \( Z \) in \eqref{eq:mZ}. By the same
  computation as in \eqref{eq:hIkj'} with \( \beta_{k-1} \) replaced by \(
  x_{s_k} \), we obtain
  \begin{align*}
   h_{\lambda,\mu}^{\vr,\vs}(I;k,j) 
   &= \chi(r_j\le s_k)h_m[Z-x_{s_k}]+\chi(r_j\le s_k)(x_{s_k}-\chi(k\in I)\alpha_{\lambda_k})h_{m-1}[Z]\\
   \label{eq:hIkj-2}&= \chi(r_j\le s_k)h_m[Z-x_{s_k}]+(x_{s_k}-\chi(k\in I)\alpha_{\lambda_k})
   h_{\lambda-\epsilon_k,\mu}^{\vr,\vs}(I\cup\{k\};k,j).
  \end{align*}
  Since \( h_{\lambda,\mu}^{\vr,\vs-\epsilon_k}(I\setminus\{k\};k,j) =
  \chi(r_j\le s_k-1) h_m[Z-x_{s_k}] \), by the above equation, in order to prove
  \eqref{eq:hIkj-claim2} it suffices show that
  \begin{equation*}
    \chi(r_j\le s_k)h_m[Z-x_{s_k}]= \chi(r_j\le s_k-1)h_m[Z-x_{s_k}].
  \end{equation*}
  If \( r_j\ne s_k \), then \( \chi(r_j\le s_k) = \chi(r_j\le s_k-1) \) and we
  are done. Thus we may assume \( r_j=s_k \). In this case we must show that \(
  h_m[Z-x_{s_k}]=0 \), i.e.,
  \begin{equation}\label{eq:chi-sk-1}
    h_{\lambda_k-\mu_j+j-k}[-A_{\lambda_k-1}+A_{\mu_j}+B_{k-1}-B_{j-1}] = 0.
  \end{equation}
  To prove \eqref{eq:chi-sk-1} we consider the two cases
  \( j<k \) and \( j\ge k \) .

  Suppose \( j<k \). In this case we have \( k\ge 2 \). 
  Since \( r_{k-1}\le
  s_{k-1}<s_k=r_j \) and \(\vr \in \RPar_n\), we have \( k-1 < j \), 
  which is a contradiction to \( j<k \).
  
  We now suppose \( j\ge k \). If \( k=1 \), then by assumption we have \(
  \lambda_k-1\ge\mu_k\ge\mu_j \). If \( k\ge2 \), since \( \lambda \) is a
  dented partition and \( \mu \) is a partition contained in \( \lambda \), we
  have \( \lambda_k-1=\lambda_{k-1}\ge\mu_{k-1}\ge\mu_j \). Therefore we always
  have \(\lambda_k-1\ge\mu_j\) and the left side of \eqref{eq:chi-sk-1} is equal
  to
  \begin{multline*}
    (-1)^{\lambda_k-\mu_j+j-k} e_{\lambda_k-\mu_j+j-k}[A_{\lambda_k-1}-A_{\mu_j}-B_{k-1}+B_{j-1}]\\
    =(-1)^{\lambda_k-\mu_j+j-k} e_{\lambda_k-\mu_j+j-k}(\alpha_{\mu_j+1},\dots,\alpha_{\lambda_k-1},\beta_k,\dots,\beta_{j-1}),
  \end{multline*}
  which is equal to \( 0 \) because the number \( \lambda_k-1-\mu_j+(j-1)-(k-1)
  \) of variables is less than \( \lambda_k-\mu_j+j-k \).
\end{proof}

Now we are ready to prove Proposition~\ref{prop:MRPPI}.

\begin{proof}[Proof of Proposition~\ref{prop:MRPPI}]
  We proceed by induction on \( (M,N) \), where
  \( M:=\sum_{i=1}^n(s_i-r_i) \) and \( N:=|\lm| \). There are two
  base cases: the case \( M < 0 \), which follows from
  Lemmas~\ref{lem:LHS=RHS=0} and \ref{lem:rec1-dual}, and the case
  \( N=0 \), which follows from Lemma~\ref{lem:LHS=RHS=1}.

For the inductive step, suppose that \( M\ge 0, N\ge1 \) and the result holds
for all pairs \( (M',N') \) with \( M'<M \) or \( N'<N \). Now consider a tuple
\( (\lambda,\mu,\vr,\vs,I) \) satisfying the conditions in
Proposition~\ref{prop:MRPPI} such that \( \sum_{i=1}^n(s_i-r_i)=M \) and \(
|\lm|=N \). If \( \vr\not\le\vs \), then the result holds by
Lemma~\ref{lem:LHS=RHS=0}. Therefore we may assume \( \vr\le\vs \).
We consider the following two cases.

\textbf{Case 1:} Suppose that \( \mu_t<\lambda_{t+1} \) for all \( 1\le t\le n-1
\). By the assumptions on \( \vr \) and \( \vs \), in this case we have \(
\vr,\vs\in\RPar_n \). Let \( (k,\lambda_k) \) be the minimal cell of \( \lambda
\). Since \( \mu_k<\lambda_{k+1}\le \lambda_k \), we have \( (k,\lambda_k)\in\lm
\). Hence by Lemma~\ref{lem:rec_main1} we can
decrease either \( M=\sum(s_i-r_i) \) or \( N=|\lm| \). Thus this case is obtained
by the induction hypothesis.

\textbf{Case 2:} 
Suppose that an integer \( 1\le t\le n-1 \) satisfies \(
\mu_t\ge\lambda_{t+1} \).
By Lemma~\ref{lem:rec1-dual}, 
it suffices to show that
\begin{align*}
  M(\lambda^{(1)},\mu^{(1)},\vr^{(1)},\vs^{(1)},I^{(1)})
  &= D(\lambda^{(1)},\mu^{(1)},\vr^{(1)},\vs^{(1)},I^{(1)}) \mbox{ and } \\
  M(\lambda^{(2)},\mu^{(2)},\vr^{(2)},\vs^{(2)},I^{(2)})
  &= D(\lambda^{(2)},\mu^{(2)},\vr^{(2)},\vs^{(2)},I^{(2)}),
\end{align*}
where we follow the notations in Lemma~\ref{lem:rec1-dual}.
In this way we can reduce the size of \( n \) without increasing \( M \) or \( N
\). Observe that if \( n=1 \) we must have Case 1 above. Therefore by repeating
this process we can divide the problem of proving the result for the tuple \(
(\lambda,\mu,\vr,\vs,I)\) into those for some tuples \(
(\lambda^*,\mu^*,\vr^*,\vs^*,I^*)\) with \( M^*=\sum (s^*_i-r^*_i)\le M \) and \(
N^*=|\lambda^*/\mu^*|\le N \) that are all in Case 1. Then this case is obtained
by the induction hypothesis and by Case 1.
\end{proof}

\subsection{Proof of \eqref{eq:g_col}: column-flagged dual Grothendieck polynomials}

In this subsection we prove the second assertion \eqref{eq:g_col} of
Theorem~\ref{thm:main_g}. Our strategy is similar to that in the previous
subsection.
We first show that \eqref{eq:g_col} is equivalent to
\begin{multline}
  \label{eq:g_col'}
  g_{\lambda'/\mu'}^{\col(\vr, \vs)}(\vx;\va,\vb)\\
  = \det\left( \chi(r_j\le s_i)   e_{\lambda_i-\mu_j-i+j}
    [X_{[r_j,s_i]}-A_{i-1}+A_{j-1}+B_{\lambda_i-1} - B_{\mu_j}]
  \right)_{i,j=1}^n.
\end{multline}
We then find a generalization of this formula and prove it using recursions.

The following lemma shows that \eqref{eq:g_col} and \eqref{eq:g_col'} are equivalent.

\begin{lem}\label{lem:chi=no_chi_e}
  Let \( \lambda,\mu\in\Par_n, \vr,\vs\in \PP^n \) with \( \mu\subseteq\lambda
  \) and \( \vr\le\vs \) such that \( r_i\le r_{i+1} \) and \( s_i\le s_{i+1} \)
  whenever \( \mu_{i}<\lambda_{i+1} \). Then
  \begin{multline*}
    \det\left( \chi(r_j\le s_i)   e_{\lambda_i-\mu_j-i+j}
    [X_{[r_j,s_i]}-A_{i-1}+A_{j-1}+B_{\lambda_i-1} - B_{\mu_j}]
    \right)_{i,j=1}^n \\
    =\det\left(    e_{\lambda_i-\mu_j-i+j}
    [X_{[r_j,s_i]}-A_{i-1}+A_{j-1}+B_{\lambda_i-1} - B_{\mu_j}]
    \right)_{i,j=1}^n.
  \end{multline*}
\end{lem}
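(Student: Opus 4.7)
The plan is to mimic the proof of Lemma~\ref{lem:chi=no_chi} essentially verbatim, replacing every appeal to $h_n = 0$ for $n < 0$ by the identical statement for $e_n$, and replacing the plethystic identity $h_n[-Y] = (-1)^n e_n[Y]$ (used to rewrite entries as elementary symmetric polynomials in too few variables) by the direct observation $e_n(y_1,\dots,y_k) = 0$ for $k < n$. Let $L$ and $R$ denote the matrices appearing on the left- and right-hand sides of the claimed identity. Since $L_{i,j} = R_{i,j}$ whenever $r_j \le s_i$, the two matrices differ only at pairs $(i,j)$ with $r_j > s_i$, where $L_{i,j} = 0$; my job is to show that in each such position the entry $R_{i,j}$ has no effect on $\det R$. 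Because $\vr \le \vs$ forces $r_i \le s_i$, any such pair has $i \ne j$, and I will split the argument into the two cases $i > j$ and $i < j$.

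For $i > j$, the chain $s_i < r_j$ combined with $r_i \le s_i$ and the hypothesis $r_k \le r_{k+1}$ whenever $\mu_k < \lambda_{k+1}$ produces some $m \in \{j, \dots, i-1\}$ satisfying $\mu_m \ge \lambda_{m+1}$. For any $p > m$ and $q \le m$ the subscript
\[
  \lambda_p - \mu_q - p + q \le \lambda_{m+1} - \mu_m - 1 \le -1
\]
is negative, hence $R_{p,q} = 0$. Thus $R$ is block upper triangular, and the discrepant entry $R_{i,j}$ lies in the zero lower-left block, so does not influence $\det R$.

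For $i < j$, I would first observe that $r_j > s_i$ collapses $X_{[r_j,s_i]}$ to $0$ in the plethystic argument defining $R_{i,j}$. If $\mu_j < \lambda_i$, then what remains is $e_{\lambda_i - \mu_j - i + j}[\alpha_i + \cdots + \alpha_{j-1} + \beta_{\mu_j+1} + \cdots + \beta_{\lambda_i - 1}]$, an elementary symmetric polynomial of degree $\lambda_i - \mu_j + j - i$ in only $\lambda_i - \mu_j + j - i - 1$ variables, which therefore vanishes. If instead $\mu_j \ge \lambda_i$, the chain $\mu_i \le \lambda_i \le \mu_j \le \mu_i$ forces $\mu_i = \lambda_i$; then for all $p \le i \le q$ the subscript $\lambda_q - \mu_p - q + p$ is $\le 0$ with equality only at $p = q = i$, where $R_{i,i} = e_0[\,\cdot\,] = 1$. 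This yields the block decomposition $\det R = \det(R_{k,\ell})_{k,\ell=1}^{i-1} \cdot \det(R_{k,\ell})_{k,\ell=i+1}^n$ in which row $i$ (containing the offending entry) is skipped.

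The plan has no real obstacle, since every subscript-vanishing and plethystic degree-counting argument used in the proof of Lemma~\ref{lem:chi=no_chi} for $h$ transfers without modification to the present setting for $e$. The only small check to carry out is that, in the subcase $\mu_j < \lambda_i$ of $i < j$, the collapsed plethystic argument really is a sum of distinct $\alpha$'s and $\beta$'s with positive coefficients — so that $e_n[\,\cdot\,]$ is literally $e_n$ of that many variables — and this is precisely what the strict inequalities $i < j$ and $\mu_j < \lambda_i$ guarantee.
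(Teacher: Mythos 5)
Your proposal is correct and follows essentially the same route as the paper, which proves this lemma by repeating the argument of Lemma~\ref{lem:chi=no_chi} verbatim and noting only the one new computation: for \( i<j \), \( \mu_j<\lambda_i \), and \( X_{[r_j,s_i]}=0 \), the entry equals \( e_{\lambda_i-\mu_j-i+j}(\alpha_i,\dots,\alpha_{j-1},\beta_{\mu_j+1},\dots,\beta_{\lambda_i-1})=0 \) by counting variables. All of your block-triangularity and degree-counting steps match the paper's.
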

\begin{proof}
  This can be proved by the same argument in the proof of
  Lemma~\ref{lem:chi=no_chi}. The only difference is the following analogue of
  \eqref{eq:R_ij=0}: if \(i<j\) and \(\mu_j < \lambda_i\), then we have
  \[
    e_{\lambda_i-\mu_j-i+j}[-A_{i-1}+A_{j-1}+B_{\lambda_i-1} - B_{\mu_j}]
    = e_{\lambda_i-\mu_j-i+j}
    (\alpha_i,\dots,\alpha_{j-1}, \beta_{\mu_j+1},\dots,\beta_{\lambda_i-1})
    =0.
  \]
  Then the argument in the proof of Lemma~\ref{lem:chi=no_chi} with this fact completes the proof.
\end{proof}

Our next step is to find a generalization of \eqref{eq:g_col'}. We first
reformulate \eqref{eq:g_col'}. To do this we introduce two different versions of
marked RPPs.

\begin{defn}
  A \emph{right-marked RPP} of shape $\lm$ is an RPP $T$ of shape $\lm$ in which every
  entry $T(i,j)$ with $T(i,j)=T(i,j-1)$ may be marked. We denote by
  $\RMRPP(\lm)$ the set of right-marked RPPs of shape $\lm$.
  For $T\in\RMRPP(\lm)$, define
  \[
    \wt_R(T) = \prod_{(i,j)\in\lm} \wt_R (T(i,j)),
  \]
  where
  \begin{align*}
    \wt_R(T(i,j)) =
    \begin{cases}
      -\alpha_{j-1} & \mbox{if $T(i,j)$ is marked,}\\
      \beta_{i} & \mbox{if $T(i,j)$ is not marked and $T(i,j)= T(i+1,j)$,}\\
      x_{T(i,j)} & \mbox{otherwise.}
    \end{cases}
  \end{align*}
  See Figure~\ref{fig:MRPP}.
\end{defn}

\begin{figure}
  \ytableausetup{mathmode, boxsize=1.7em}
  \ytableaushort{\none\none124{4^*},\none1{1^*}35,\none11,3{3^*}{3^*}}
\caption{An example of \( T\in\RMRPP(\lm)\), where \( \lambda=(6,5,3,3) \) and 
\( \mu=(2,1,1) \). We have the weight \( \wt(T)=x_1^2x_2x_3^2x_4x_5
(-\alpha_1)(-\alpha_2)^2(-\alpha_5)\beta_1\beta_2 \). }
  \label{fig:MRPP}
\end{figure}

\begin{defn}
  A \emph{bottom-marked RPP} of shape $\lm$ is an RPP $T$ of shape $\lm$ in which every
  entry $T(i,j)$ with $T(i,j)=T(i-1,j)$ may be marked. We denote by
  $\BMRPP(\lm)$ the set of bottom-marked RPPs of shape $\lm$.
  For $T\in\BMRPP(\lm)$, define
  \[
    \wt_B(T) = \prod_{(i,j)\in\lm} \wt_B (T(i,j)),
  \]
  where
  \begin{align*}
    \wt_B(T(i,j)) =
    \begin{cases}
      -\alpha_{i-1} & \mbox{if $T(i,j)$ is marked,}\\
      \beta_{j} & \mbox{if $T(i,j)$ is not marked and $T(i,j)= T(i,j+1)$,}\\
      x_{T(i,j)} & \mbox{otherwise.}
    \end{cases}
  \end{align*}
\end{defn}
One can easily see that bottom-marked RPPs are just the transposes of
right-marked RPPs, and their weights coincide, that is,
for \( T\in\RMRPP(\lambda/\mu) \), \( \wt_R(T) = \wt_B(T') \).

For an RPP \( T_0 \) of shape \( \lm \), let \( \MRPP(T_0) \) (resp.~\( \RMRPP(T_0) \)) be the set of left-marked (resp.~right-marked) RPPs \( T \) whose underlying RPP is \( T_0 \). To avoid confusion, in this section we also write \( \wt_L \) for the weight function on left-marked RPPs given in Definition~\ref{def:left MRPP}.
\begin{prop} \label{prop:leftmarked=rightmarked}
  For an RPP \( T_0 \) of shape \( \lm \),
\[
  \sum_{T\in\MRPP(T_0)} \wt_L(T) = \sum_{T\in\RMRPP(T_0)} \wt_R(T).
\]
\end{prop}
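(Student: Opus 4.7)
My plan is to factor both sides into products over cells, and then to reduce to a local identity on each connected component of cells of equal value in $T_0$. Since in both the left- and right-marked settings the marking at a cell $(i,j)$ is an independent choice whose weight depends only on local data of $T_0$, the sums factor as
\[
  \sum_{T\in\MRPP(T_0)}\wt_L(T) = \prod_{(i,j)\in\lm} w_L(i,j), \qquad \sum_{T\in\RMRPP(T_0)}\wt_R(T) = \prod_{(i,j)\in\lm} w_R(i,j),
\]
where $w_L(i,j) = u^L(i,j) - \alpha_j\,\chi(T_0(i,j)=T_0(i,j+1))$, with $u^L(i,j)=\beta_{i-1}$ if $T_0(i-1,j)=T_0(i,j)$ and $u^L(i,j)=x_{T_0(i,j)}$ otherwise, and $w_R$, $u^R$ are defined symmetrically using the cell below and the left neighbor (so $u^R\in\{x_{T_0(i,j)},\beta_i\}$, and marked cells in $R$ carry weight $-\alpha_{j-1}$). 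Each factor depends only on cells in the connected component of $(i,j)$ under side-adjacency of equal $T_0$-values, so both products further factor over these components. It therefore suffices to prove $L(C):=\prod_{(i,j)\in C}w_L(i,j)$ equals $R(C):=\prod_{(i,j)\in C}w_R(i,j)$ for each connected component $C$ of constant value~$v$.

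Using the row- and column-monotonicity of $T_0$ together with connectedness, one checks that each such $C$ is a connected skew shape: its rows are intervals $[a_i,b_i]$ with $a_\bullet,b_\bullet$ weakly decreasing in $i$, its columns are intervals $[c_j,d_j]$ with $c_\bullet,d_\bullet$ weakly decreasing in $j$, and $a_{i-1}\le b_i$ for adjacent rows of $C$ (and dually $c_j\le d_{j+1}$ for adjacent columns) — otherwise $C$ would split into two pieces with no edge connecting them.

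Let $[i_0,i_1]$ be the row range of $C$. I would split $L(C)$ into the product over non-markable cells (those $(i,j)$ with $(i,j+1)\notin C$, namely the rightmost cell $(i,b_i)$ of each row) and the product over markable cells (one for each horizontal edge of $C$). Using $u^L(i_0,b_{i_0})=x_v$ and, for $i>i_0$, $u^L(i,b_i)=\beta_{i-1}$ (which follows from $a_{i-1}\le b_i\le b_{i-1}$), the non-markable part equals $x_v\,\beta_{i_0}\beta_{i_0+1}\cdots\beta_{i_1-1}$. Grouping markable cells by column, the markable part equals
\[
  \prod_{j}(x_v-\alpha_j)\prod_{k=c_j}^{d_{j+1}-1}(\beta_k-\alpha_j),
\]
where $j$ ranges over columns of $C$ whose successor column $j+1$ is also in $C$; this uses that the $L$-markable cells in column $j$ are exactly those at rows $[c_j,d_{j+1}]$, of which the top one ($i=c_j$) has $u^L=x_v$ and the rest have $u^L=\beta_{i-1}$. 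A completely parallel computation for $R(C)$ — with non-markable cells $(i,a_i)$ contributing $x_v\,\beta_{i_0}\cdots\beta_{i_1-1}$ (now $x_v$ appears at the bottom row $i_1$), and with $R$-markable cells in column $j+1$ carrying weight $-\alpha_{(j+1)-1}=-\alpha_j$ over the same row range $[c_j,d_{j+1}]$ — yields exactly the same product. Hence $L(C)=R(C)$. The main obstacle will be the careful boundary bookkeeping for $u^L$ and $u^R$ at the extremal rows and columns of $C$, which is made consistent by the connectedness-and-monotonicity structure noted above.
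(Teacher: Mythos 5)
Your proposal is correct, but it proves the identity by a genuinely different method than the paper. The paper also begins by restricting to the cells of a fixed value \( k \) of \( T_0 \) (the coarser version of your connected components), but from there it argues \emph{bijectively}: it writes down the weight monomial of an individual marked RPP and exhibits an explicit weight-preserving bijection \( \varphi \) from left-marked to right-marked fillings, obtained by transporting each marking one cell up (with a cyclic identification of the top and bottom cells of each column of the value-\( k \) region) and then one cell to the right. You never construct a bijection: you exploit the fact that the marking choices are independent, so each generating sum factors completely into local factors over cells, then further over connected components \( C \) of constant value, and you evaluate both products in closed form as
\( x_v\,\beta_{i_0}\cdots\beta_{i_1-1}\prod_{j}(x_v-\alpha_j)\prod_{k=c_j}^{d_{j+1}-1}(\beta_k-\alpha_j) \).
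I checked the points you flag as delicate and they do hold: a connected component of constant value is a connected skew shape (rows and columns of \( C \) are intervals with weakly decreasing endpoints, and adjacent rows, resp.\ columns, overlap — any path between consecutive rows must cross a vertical edge), and the boundary bookkeeping works out, e.g.\ \( u^L(i,b_i)=\beta_{i-1} \) for \( i>i_0 \) precisely because \( a_{i-1}\le b_i\le b_{i-1} \), and the \( L \)-markable cells of column \( j \) and the \( R \)-markable cells of column \( j+1 \) occupy the same row range \( [c_j,d_{j+1}] \). Your route buys an explicit product formula for the marked generating function of a fixed underlying RPP (strictly more information than the identity itself), at the cost of the structural lemma on components; the paper's bijection avoids that lemma and is better suited to tracking refined statistics, but requires verifying weight preservation for a somewhat fiddly map.
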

\begin{proof}
  For \( k\ge 1 \), let \( T_0[k] \) be the RPP obtained from \( T_0 \) by taking the cells filled with \( k \). Then, by definition,
  \[
    \sum_{T\in\MRPP(T_0)} \wt_L(T) = \prod_{k\ge 1} \sum_{T\in\MRPP(T_0[k])}\wt_L(T)
  \]
  and
  \[
    \sum_{T\in\RMRPP(T_0)} \wt_R(T) = \prod_{k\ge 1} \sum_{T\in\RMRPP(T_0[k])}\wt_R(T).
  \]
  Therefore, without loss of generality we may assume that every entry in \( T_0 \) is \( k \) for some fixed \( k\ge 1 \).
  Then, for a left-marked RPP \( T\in\MRPP(T_0) \), we have
  \begin{equation}  \label{eq:wt_left}
    \wt_L(T) = x_k^{m_L} \prod_{j\ge 1} (-\alpha_j)^{a_{L,j}} \prod_{i\ge 1} \beta_i^{b_{L,i}},
  \end{equation}
  where \( m_L \) is the number of cells \( u \) with \( k \) in \( T \) such that
  the cell above \( u \) is not in \( \lm \), \( a_{L,j} \) is the number of
  cells with \( k^* \) in the \( j \)th column of \( T \), and \( b_{L,i} \) is the
  number of cells \( u \) with \( k \) in the \( (i+1) \)st row of \( T \) such that
  the cell above \( u \) is in \( \lm \). Similarly, for a right-marked RPP \(
  T\in\RMRPP(T_0) \),
  \begin{equation}  \label{eq:wt_right}
    \wt_R(T) = x_k^{m_R} \prod_{j\ge 1} (-\alpha_j)^{a_{R,j}} \prod_{i\ge 1} \beta_i^{b_{R,i}},
  \end{equation}
  where \( m_R \) is the number of cells \( u \) with \( k \) in \( T \) such that
  the cell below \( u \) is not in \( \lm \), \( a_{R,j} \) is the number of
  cells with \( k^* \) in the \( (j+1) \)st column of \( T \), and \( b_{R,i} \) is the
  number of cells \( u \) with \( k \) in the \( i \)th row of \( T \) such that the
  cell below \( u \) is in \( \lm \).

  For a left-marked RPP \( T\in\MRPP(T_0) \), let \( \varphi(T) \) be the
  right-marked RPP in \( \RMRPP(T_0) \) such that, assuming \(
  (i,j),(i,j-1)\in\lm \), the cell \( (i,j) \) is marked in \( \varphi(T) \) if
  and only if the cell \( (i+1,j-1) \) is marked in \( T \) if \(
  (i+1,j-1)\in\lambda \), and the cell \( (\mu'_{j-1}+1, j-1) \) is marked in \(
  T \) otherwise. In other words, the map \( \varphi \) first moves each marking to
  its upper cell with the identification \( (\lambda'_j,j)=(\mu'_j,j) \) of
  the last cells in each column of \( \lambda \) and \( \mu \),
  and then moves to the right cell. Then using
  \eqref{eq:wt_left} and \eqref{eq:wt_right}, one can directly check that \(
  \wt_L(T) = \wt_R(\varphi(T)) \). Clearly, the map \( \varphi \) is bijective,
  and the proof follows. See Figure~\ref{fig:LRMPP}.
\end{proof}

\begin{figure}
  \[
    \ytableausetup{aligntableaux = center}
  \begin{ytableau}
      \none & \none & \none & k^* & k \\
      \none & \none & *(lightgray) k^*\tikzmark{a} & \tikzmark{y}k^* & k \\
      \none & \none & *(lightgray)k\tikzmark{x} & \tikzmark{d}k \\
      \none & k^* & *(lightgray) k^*\tikzmark{c} & \tikzmark{f}k \\
      \none & k & *(lightgray) k^*\tikzmark{e} & \tikzmark{b}k \\
      k^* & k & k
  \end{ytableau} \quad \xrightarrow{\quad\varphi\quad}
    \begin{ytableau}
      \none & \none & \none & k & k^* \\
      \none & \none & k & *(lightgray) k & k^* \\
      \none & \none & k &  *(lightgray) k^* \\
      \none & k & k & *(lightgray) k^* \\
      \none & k & k & *(lightgray) k^* \\
      k & k^* & k^*
    \end{ytableau}
  \begin{tikzpicture}[overlay, remember picture, shorten >=.5pt, shorten <=.5pt, transform canvas={yshift=.25\baselineskip}]
    \draw [->] ({pic cs:x}) -- ({pic cs:y});
    \draw [->] ({pic cs:a}) -- ({pic cs:b});
    \draw [->] ({pic cs:c}) -- ({pic cs:d});
    \draw [->] ({pic cs:e}) -- ({pic cs:f});
  \end{tikzpicture}
  \]
  \caption{ An example of the map \( \varphi \) sending \( T \) to \( \varphi(T)
    \) for \( \lambda=(5,5,4,4,4,3), \mu=(3,2,2,1,1) \), and \( T \) with all
    cells filled with \( k \) or \( k^* \). The arrows on the left diagram show
    how the marked integers and unmarked integers move from column~3 to
    column~4. }
  \label{fig:LRMPP}
\end{figure}

For \( \vr,\vs\in \PP^n \), we denote by \( \RMRPP^{\col(\vr,\vs)}
(\lambda'/\mu') \) the set of right-marked RPPs \( T \)
of shape \( \lambda'/\mu' \) such that
for each \( (i,j)\in \lambda'/\mu' \), \( r_j \le T(i,j) \le s_j \).
According to Proposition~\ref{prop:leftmarked=rightmarked}, 
we can restate \eqref{eq:g_col'} as
\begin{multline} \label{eq:rMRPP=det1}
  \sum_{T\in\RMRPP^{\col(\vr,\vs)}(\lambda'/\mu')} \wt_R(T) \\
  = \det\left( \chi(r_j\le s_i)   e_{\lambda_i-\mu_j-i+j}
    [X_{[r_j,s_i]}-A_{i-1}+A_{j-1}+B_{\lambda_i-1} - B_{\mu_j}]
  \right)_{i,j=1}^n.
\end{multline}

To apply the inductive arguments in the previous section, we consider the
transposes of right-marked RPPs so that the column bound conditions \( r_j \le
T(i,j) \le s_j \) are changed into the row bound conditions \( r_i\le T(i,j) \le
s_i \). 
Since the transpose of a right-marked RPP is a bottom-marked RPP,
we define \( \BMRPP^{\row(\vr,\vs)}(\lambda/\mu) \) similarly to
\( \MRPP^{\row(\vr,\vs)}(\lambda/\mu) \),
and then we can restate \eqref{eq:rMRPP=det1} as follows:
\begin{multline}  \label{eq:rMRPP=det2}
  \sum_{T\in\BMRPP^{\row(\vr,\vs)}(\lambda/\mu)} \wt_B(T)
  \\   = \det\left( \chi(r_j\le s_i)   e_{\lambda_i-\mu_j-i+j}
    [X_{[r_j,s_i]}-A_{i-1}+A_{j-1}+B_{\lambda_i-1} - B_{\mu_j}]
  \right)_{i,j=1}^n.
\end{multline}

We also introduce a notion similar to \( \MRPP^{\row(\vr, \vs)}_I(\lambda/\mu) \).

\begin{defn}\label{defn:BMRPPI}
  Let \( \lambda\in\DPar_n \) and \( \mu\in\Par_n \) with \(
  \mu\subseteq\lambda \), and let \( \vr,\vs\in\PP^n \) and \( I\subseteq [n] \). If
  \( \vr\not\le \vs \), we define \( \BMRPP^{\row(\vr, \vs)}_I(\lambda/\mu)=\emptyset
  \). If \( \vr\le \vs \), we define \( \BMRPP^{\row(\vr, \vs)}_I(\lm) \) to be
  the set of RPPs \( T \) of shape \( \lm \) satisfying the following conditions.
    \begin{itemize}
    \item For all \( (i,j)\in\lm \), we have \( r_i \le T(i,j) \le s_i \).
\item If \( (k, \lambda_k) \) is the minimal cell of \( \lambda \) and \( k-1\in I \),
  then \( T(k,\lambda_k)\ge s_{k-1} \).
    \item Every entry $T(i,j)$ with $T(i,j)=T(i-1,j)$ may be marked, where for each \( i\in [n] \) we define
  \[
    T(i,\lambda_i+1) =
    \begin{cases}
      \infty & \mbox{if \( i\not\in I \)},\\
      s_i & \mbox{if \( i\in I \).}
    \end{cases}
  \]
    \end{itemize}

For \( T\in \BMRPP^{\row(\vr,\vs)}_I(\lm) \), we define
\[
  \wt_B(T) = \prod_{(i,j)\in\lm} \wt_B(T(i,j)),
\]
where
\begin{align*}
  \wt_B(T(i,j)) =
  \begin{cases}
    -\alpha_{i-1} & \mbox{if $T(i,j)$ is marked,}\\
    \beta_{j} & \mbox{if $T(i,j)$ is not marked and $T(i,j)= T(i,j+1)$,}\\
    x_{T(i,j)} & \mbox{otherwise.}
  \end{cases}
\end{align*}
\end{defn}

Finally we state a generalization of \eqref{eq:rMRPP=det2}, which has been shown
to be equivalent to the Jacobi--Trudi-like identity in \eqref{eq:g_col}.

\begin{prop}\label{prop:MRPPI_e}
  Let \( \lambda\in\DPar_n,\mu\in\Par_n, \vr,\vs\in \PP^n \) with \(
  \mu\subseteq\lambda \) such that \( r_i\le r_{i+1} \) and \( s_i\le s_{i+1} \)
  whenever \( \mu_{i}<\lambda_{i+1} \). Suppose that \( (k,\lambda_k) \) is the
  minimal cell of \( \lambda \) and \( I = \{1,2,\dots,p\} \) or \( I =
  \{1,2,\dots,p\}\setminus\{k\} \) for some integer \( k\le p\le \ell(\lambda)
  \) satisfying
  \[
    \lambda_1+1=\dots=\lambda_{k-1}+1=\lambda_k=\lambda_{k+1}=\dots=\lambda_p.
  \]
  Then
  \begin{equation}\label{eq:refined_conj_e}
    \sum_{T\in\BMRPP^{\row(\vr, \vs)}_I(\lm)} \wt_B(T) = 
    \det\left( e_{\lambda,\mu}^{\vr,\vs}(I;i,j)  \right)_{i,j=1}^n,
  \end{equation} 
  where
  \[
    e_{\lambda,\mu}^{\vr,\vs}(I;i,j) = \chi(r_j\le s_i)
    e_{\lambda_i-\mu_j-i+j}
    [X_{[r_j,s_i-\chi(i\in I)]}-A_{i-1}+A_{j-1}+B_{\lambda_i-1+\chi(i\in I)} - B_{\mu_j}].
  \]
\end{prop}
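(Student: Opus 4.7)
The plan is to mirror the proof of Proposition~\ref{prop:MRPPI} step by step. Introduce
\[
  M_B(\lambda,\mu,\vr,\vs,I) = \sum_{T\in\BMRPP^{\row(\vr, \vs)}_I(\lm)} \wt_B(T),
  \qquad
  D_B(\lambda,\mu,\vr,\vs,I) = \det\left( e_{\lambda,\mu}^{\vr,\vs}(I;i,j) \right)_{i,j=1}^n,
\]
and induct lexicographically on \((M,N) = \bigl(\sum_{i=1}^n (s_i-r_i),\,|\lm|\bigr)\). The base cases follow analogues of Lemmas~\ref{lem:LHS=RHS=0} and~\ref{lem:LHS=RHS=1}: if \(\vr\not\le\vs\), choose \(k\) with \(r_k>s_k\), which makes \(\BMRPP^{\row(\vr, \vs)}_I(\lm)=\emptyset\) and forces \(e_{\lambda,\mu}^{\vr,\vs}(I;i,j)=0\) for \(1\le i\le k\le j\le n\) via \(\chi(r_j\le s_i)=0\); and if \(\mu=\lambda\), the only tableau is empty and the matrix \((e_{\lambda,\mu}^{\vr,\vs}(I;i,j))\) is upper unitriangular.

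For the inductive step, I would establish two recursion lemmas paralleling Lemmas~\ref{lem:rec1-dual} and~\ref{lem:rec_main1}. The splitting lemma asserts that if \(\mu_t\ge\lambda_{t+1}\) for some \(1\le t\le n-1\), then both \(M_B\) and \(D_B\) factor as a product of two smaller-size instances on the index blocks \(\{1,\dots,t\}\) and \(\{t+1,\dots,n\}\), twisted by an appropriate shift of the \(\alpha\)-parameters (since in the column-flagged setup the \(\alpha\)'s are attached to rows). On the determinant side this is a block-triangularity argument: for \(t+1\le i\le n\) and \(1\le j\le t\), the dented-partition inequality \(\lambda_i\le\lambda_{t+1}+1\) together with \(\mu_j\ge\mu_t\ge\lambda_{t+1}\) forces \(\lambda_i-\mu_j-i+j\le -1\), so \(e_{\lambda_i-\mu_j-i+j}[\,\cdot\,]=0\). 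The local recursion at the minimal cell \((k,\lambda_k)\) comes in two flavours: when \(\lambda\) is not a partition and \(s_{k-1}=s_k\), the value in the cell is forced and we get a one-term recursion with coefficient \(\beta_{\lambda_k}-\chi(k\in I)\alpha_{k-1}\); otherwise the usual two-term recursion holds, peeling off either the topmost value \(s_k\) in cell \((k,\lambda_k)\) (with coefficient \(x_{s_k}-\chi(k\in I)\alpha_{k-1}\)) or the bound \(s_k\) entirely (replacing \(\vs\) by \(\vs-\epsilon_k\) and \(I\) by \(I\setminus\{k\}\)). With these lemmas in hand, the Case~1 / Case~2 dichotomy runs exactly as in the proof of Proposition~\ref{prop:MRPPI}.

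The main obstacle will be verifying the pointwise identity on the determinant side in the local recursion, namely an identity of the form
\[
  e_{\lambda,\mu}^{\vr,\vs}(I;k,j) = (x_{s_k}-\chi(k\in I)\alpha_{k-1})\,e_{\lambda-\epsilon_k,\mu}^{\vr,\vs}(I\cup\{k\};k,j)+e_{\lambda,\mu}^{\vr,\vs-\epsilon_k}(I\setminus\{k\};k,j),
\]
together with its analogue in the other case. Unlike the \(h\)-version, here both parts of Lemma~\ref{lem:e_m[Z]} must be deployed together: one to peel \(x_{s_k}\) off the alphabet \(X_{[r_j,s_k-\chi(k\in I)]}\), and the other to peel \(\alpha_{k-1}\) off \(A_{k-1}\); careful bookkeeping of the \(\chi(k\in I)\) indicator controls which of the two forms of Lemma~\ref{lem:e_m[Z]} applies in each term, and the cross-terms cancel to leave exactly the stated coefficient. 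A separate boundary computation for \(r_j>s_k\), using the dented-partition inequality \(\lambda_b\le\lambda_a+1\) to count variables in \(e_{-m}[\,\cdot\,]\), shows that both sides vanish in that regime. Once this identity is verified, the determinant recursion follows from row-linearity, closing the induction.
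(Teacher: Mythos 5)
Your overall strategy is exactly the paper's: the paper proves Proposition~\ref{prop:MRPPI_e} by rerunning the induction of Proposition~\ref{prop:MRPPI} with the \(e\)-analogues of the initial-condition, splitting, and local-recursion lemmas (Lemmas~\ref{lem:LHS=RHS=0e}, \ref{lem:LHS=RHS=1e}, \ref{lem:rec1-dual_e}, \ref{lem:MRPP_rec_e}). However, the coefficients you assign to the local recursion at the minimal cell are wrong, and since you correctly identify verifying this recursion as the crux, the error is material. You obtained your coefficients by mechanically swapping subscripts in Lemma~\ref{lem:rec_main1}, but in the bottom-marked setting the two comparisons exchange roles: marking of \((k,\lambda_k)\) is governed by the cell \emph{above} (hence by whether \(s_{k-1}=s_k\) and \(\lambda\) is dented, not by \(I\)), while \(I\) governs the virtual entry to the \emph{right} and hence only the choice between \(\beta_{\lambda_k}\) and \(x_{s_k}\) for an unmarked entry. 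The correct coefficients (Lemma~\ref{lem:MRPP_rec_e}) are \(x_{s_k}-\alpha_{k-1}+\chi(k\in I)(\beta_{\lambda_k}-x_{s_k})\) in the one-term case — the \(-\alpha_{k-1}\) appears \emph{unconditionally}, since the forced entry \(s_k\) always equals the virtual entry \(s_{k-1}\) above it — and \(x_{s_k}+\chi(k\in I)(\beta_{\lambda_k}-x_{s_k})\) in the two-term case, where no \(\alpha\) appears at all because an entry equal to \(s_k>s_{k-1}\) can never be marked. Your coefficients \(\beta_{\lambda_k}-\chi(k\in I)\alpha_{k-1}\) and \(x_{s_k}-\chi(k\in I)\alpha_{k-1}\) disagree with these for \(k\notin I\) and \(k\in I\) respectively.

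Consequently the pointwise determinant identity you propose is false. Writing \(Z=X_{[r_j,s_k]}-A_{k-1}+A_{j-1}+B_{\lambda_k-1}-B_{\mu_j}\) and \(m=\lambda_k-\mu_j+j-k\), one has \(e_{\lambda,\mu}^{\vr,\vs}(I;k,j)=\chi(r_j\le s_k)\,e_m[Z+\chi(k\in I)(\beta_{\lambda_k}-x_{s_k})]\), and two applications of Lemma~\ref{lem:e_m[Z]} give
\begin{equation*}
  e_{\lambda,\mu}^{\vr,\vs}(I;k,j)
  = e_{\lambda,\mu}^{\vr,\vs-\epsilon_k}(I\setminus\{k\};k,j)
  + \bigl(x_{s_k}+\chi(k\in I)(\beta_{\lambda_k}-x_{s_k})\bigr)\,
    e_{\lambda-\epsilon_k,\mu}^{\vr,\vs}(I\cup\{k\};k,j),
\end{equation*}
not the relation with coefficient \(x_{s_k}-\chi(k\in I)\alpha_{k-1}\) that you state; the terms involving \(\alpha_{k-1}\) only enter in the one-term case, where the identity is a row operation \(e(I;k,j)=e(I;k-1,j)+(\cdots)e_{\lambda-\epsilon_k}(I\cup\{k\};k,j)\) (subtracting row \(k-1\) from row \(k\)), a step your sketch also omits. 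Once the coefficients are corrected in both the combinatorial and determinantal recursions, the rest of your argument (base cases, splitting lemma with \(\phi_{\va}\), boundary case \(r_j>s_k\), and the Case~1/Case~2 induction) goes through as in the paper.
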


Observe that \eqref{eq:rMRPP=det2} is the special case of
Proposition~\ref{prop:MRPPI_e} when \( \lambda \) is a partition and \(
I=\emptyset \). Similar to the previous subsection we prove
Proposition~\ref{prop:MRPPI_e} by showing that both sides of the equation have
the same recursions and initial conditions. Let
\begin{align*}
  M'(\lambda,\mu,\vr,\vs,I) &= \sum_{T\in\BMRPP^{\row(\vr, \vs)}_I(\lm)} \wt_B(T),\\
  D'(\lambda,\mu,\vr,\vs,I) &=\det\left( e_{\lambda,\mu}^{\vr,\vs}(I;i,j)  \right)_{i,j=1}^n.
\end{align*}

The next two lemmas give the same initial conditions for \( M' \) and \( D' \).
These lemmas can be proved by the same arguments in the proofs of
Lemmas~\ref{lem:LHS=RHS=0} and \ref{lem:LHS=RHS=1}, respectively.

\begin{lem}\label{lem:LHS=RHS=0e}
  Let \( \lambda\in\DPar_n,\mu\in\Par_n,  \vr, \vs\in\RPar_n\) with \( \mu\subseteq\lambda \).
  If \( \vr\not\le\vs \), then
  \[
    M'(\lambda,\mu,\vr,\vs,I) = D'(\lambda,\mu,\vr,\vs,I)= 0.
  \]  
\end{lem}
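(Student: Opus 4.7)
The plan is to imitate the proof of Lemma~\ref{lem:LHS=RHS=0} verbatim, since the structure of \( M' \) and \( D' \) parallels that of \( M \) and \( D \). The hypothesis \( \vr\not\le\vs \) gives an integer \( 1\le k\le n \) with \( r_k>s_k \). For the combinatorial side, by the very definition of \( \BMRPP^{\row(\vr,\vs)}_I(\lm) \) (see Definition~\ref{defn:BMRPPI}), the case \( \vr\not\le\vs \) forces \( \BMRPP^{\row(\vr,\vs)}_I(\lm)=\emptyset \), so immediately \( M'(\lambda,\mu,\vr,\vs,I)=0 \).

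For the determinantal side, I would show that the \( (i,j) \)-entry \( e_{\lambda,\mu}^{\vr,\vs}(I;i,j) \) of the matrix defining \( D'(\lambda,\mu,\vr,\vs,I) \) vanishes for all \( 1\le i\le k \) and \( k\le j\le n \). Indeed, because \( \vr,\vs\in\RPar_n \), we have
\[
r_j \ge r_k > s_k \ge s_i,
\]
so the indicator \( \chi(r_j\le s_i) = 0 \), and hence \( e_{\lambda,\mu}^{\vr,\vs}(I;i,j)=0 \). This yields a \( k\times (n-k+1) \) block of zeros in the upper-right corner of the matrix; consequently the first \( k \) rows have nonzero entries only among the first \( k-1 \) columns, so they are linearly dependent, and the determinant vanishes.

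There is no serious obstacle here: the only thing to double-check is that the indicator \( \chi(r_j\le s_i) \) appearing in the definition of \( e_{\lambda,\mu}^{\vr,\vs}(I;i,j) \) indeed forces the entry to be zero (as opposed to merely zeroing out one factor of a sum), which is immediate from the formula. Thus both sides equal \( 0 \) and the lemma follows.
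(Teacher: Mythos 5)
Your proof is correct and is essentially the paper's own argument: the paper disposes of this lemma by noting it follows "by the same arguments as in the proof of Lemma~\ref{lem:LHS=RHS=0}," which is exactly the route you take (the definition forces \( M'=0 \), and the indicator \( \chi(r_j\le s_i) \) kills the \( (i,j) \)-entries for \( 1\le i\le k\le j\le n \), giving a zero block that forces the determinant to vanish). Nothing further is needed.
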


\begin{lem}\label{lem:LHS=RHS=1e}
Let \( \mu\in\Par_n, \vr, \vs\in \PP^n\).
 If \( \vr\le \vs \), then
  \[
    M'(\mu,\mu,\vr,\vs,I) =D'(\mu,\mu,\vr,\vs,I)= 1.
  \]
\end{lem}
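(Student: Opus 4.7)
The approach is to mirror the proof of Lemma~\ref{lem:LHS=RHS=1} verbatim, just replacing the $h$-version arguments by their $e$-version analogues. For the first equality, I would note that by definition $M'(\mu,\mu,\vr,\vs,I)$ is a sum over $\BMRPP^{\row(\vr,\vs)}_I(\mu/\mu)$. Since the skew shape $\mu/\mu$ contains no cells, the assumption $\vr\le\vs$ guarantees (per the convention laid out after Definition~\ref{defn:BMRPPI}, as in the analogous discussion for $\MRPP$) that this set consists of the single empty filling, whose weight is the empty product $1$. Hence $M'(\mu,\mu,\vr,\vs,I)=1$.

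For the second equality, I would show directly that the $n\times n$ matrix $\bigl(e_{\mu,\mu}^{\vr,\vs}(I;i,j)\bigr)$ is upper unitriangular. Writing out the $(i,j)$-entry
\[
e_{\mu,\mu}^{\vr,\vs}(I;i,j)=\chi(r_j\le s_i)\,
  e_{\mu_i-\mu_j-i+j}\bigl[X_{[r_j,s_i-\chi(i\in I)]}-A_{i-1}+A_{j-1}+B_{\mu_i-1+\chi(i\in I)}-B_{\mu_j}\bigr],
\]
we see that on the diagonal $i=j$ the degree index is $\mu_i-\mu_j-i+j=0$, so $e_0[\,\cdot\,]=1$, while $\chi(r_i\le s_i)=1$ by $\vr\le\vs$; thus every diagonal entry equals $1$. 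Below the diagonal ($i>j$), since $\mu$ is a partition we have $\mu_i\le\mu_j$, hence
\[
\mu_i-\mu_j-i+j\le -(i-j)<0,
\]
so the elementary symmetric function $e_{\mu_i-\mu_j-i+j}[\,\cdot\,]$ vanishes. Consequently $D'(\mu,\mu,\vr,\vs,I)=\prod_{i=1}^n 1=1$.

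There is essentially no obstacle here: the statement is an immediate consequence of the definitions together with the elementary fact $e_n=0$ for $n<0$, which plays exactly the role that $h_n=0$ for $n<0$ played in the proof of Lemma~\ref{lem:LHS=RHS=1}. The only mild care needed is to invoke the convention that $\BMRPP^{\row(\vr,\vs)}_I(\mu/\mu)=\{\emptyset\}$ when $\vr\le\vs$, matching the convention used earlier for $\MRPP^{\row(\vr,\vs)}_I(\mu/\mu)$.
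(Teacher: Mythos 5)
Your proof is correct and matches the paper's approach exactly: the paper proves this lemma by citing the same argument as for Lemma~\ref{lem:LHS=RHS=1}, namely that $M'=1$ holds by definition (the empty filling of $\mu/\mu$) and that the matrix defining $D'$ is upper unitriangular since the diagonal entries are $e_0[\cdot]=1$ and the subdiagonal entries have negative index. You have simply written out the details that the paper leaves implicit.
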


The following two lemmas give the same recursions for \( M' \) and \( D' \).

\begin{lem}  \label{lem:rec1-dual_e}
  Let \( \lambda\in\DPar_n,\mu\in\Par_n,\vr,\vs\in\PP^n \) with \( \mu\subseteq\lambda \).
  Suppose that $1\le t\le n-1$ is an integer such that $\mu_t\ge\lambda_{t+1}$.
  Let \( \phi_{\va} \) be the map defined by shifting \( \alpha_i \) to \( \alpha_{i+1}
  \) for \( i\ge1 \), then for \( F\in\{M,D\} \), we have
  \[
    F'(\lambda,\mu,\vr,\vs,I)
    = F'(\lambda^{(1)},\mu^{(1)},\vr^{(1)},\vs^{(1)},I^{(1)})
    \phi_{\va}^t(F'(\lambda^{(2)},\mu^{(2)},\vr^{(2)},\vs^{(2)},I^{(2)})),
  \]
    where \( \gamma^{(1)} = (\gamma_1,\dots,\gamma_t) \) and
    \( \gamma^{(2)} = (\gamma_{t+1},\dots,\gamma_n) \)
    for each \( \gamma\in \{\lambda,\mu,\vr,\vs\} \),
    and \( I^{(1)}= I\cap\{1,2,\dots,t\} \)
    and \( I^{(2)}= I\cap\{t+1,t+2,\dots,n\} \).
\end{lem}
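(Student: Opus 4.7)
The plan is to follow the proof of Lemma~\ref{lem:rec1-dual} closely, adapted from left-marked RPPs (where markings produce the column-based factor \( -\alpha_j \)) to bottom-marked RPPs (where markings produce the row-based factor \( -\alpha_{i-1} \)); accordingly the shift that appears in the tensor factor is applied to the \( \va \) variables rather than the \( \vb \) variables.

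For the combinatorial side \( F=M \), I would first observe that the hypothesis \( \mu_t \ge \lambda_{t+1} \) forces the two row blocks of \( \lambda/\mu \) to occupy disjoint sets of columns: every cell in rows \( 1,\dots,t \) sits in a column \( j>\mu_t\ge \lambda_{t+1} \), whereas every cell in rows \( t+1,\dots,n \) sits in a column \( j\le \lambda_{t+1} \). In particular no cell in row \( t \) shares a column with any cell in row \( t+1 \), so the column-neighbor comparisons that govern the \( \beta_j \) weights and the vertical comparisons that govern the markings never straddle the two blocks. Consequently any \( T\in \BMRPP^{\row(\vr,\vs)}_I(\lambda/\mu) \) decomposes uniquely as an independent pair \( (T^{(1)},T^{(2)}) \) on the two blocks, respecting the row-flag bounds \( r_i\le T(i,j)\le s_i \) and the augmented values \( T(i,\lambda_i+1) \). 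The \( \beta_j \) and \( x_{T(i,j)} \) contributions are insensitive to row re-indexing, while each marking contribution \( -\alpha_{i-1} \) in \( T^{(2)} \), after shifting its rows to start from \( 1 \), becomes \( -\alpha_{i-1+t} \), which is precisely \( \phi_{\va}^t \) applied to the re-indexed weight; this yields the claimed identity for \( M' \).

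For the determinantal side \( F=D \), I would prove that the matrix \( \bigl(e_{\lambda,\mu}^{\vr,\vs}(I;i,j)\bigr)_{i,j=1}^n \) is block upper triangular with blocks of sizes \( t \) and \( n-t \). Combining \( \mu_t\ge\lambda_{t+1} \) with the dented-partition bound \( \lambda_i\le \lambda_{t+1}+1 \) for every \( i\ge t+1 \), the same numerical argument as in Lemma~\ref{lem:rec1-dual} gives \( \lambda_i-\mu_j-i+j\le -1 \) whenever \( i>t \) and \( j\le t \), so the corresponding entry vanishes. The upper-left \( t\times t \) block is, entry by entry, the matrix defining \( D'(\lambda^{(1)},\mu^{(1)},\vr^{(1)},\vs^{(1)},I^{(1)}) \) because \( \chi(i\in I)=\chi(i\in I^{(1)}) \) for \( i\le t \). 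For the lower-right \( (n-t)\times(n-t) \) block, re-indexing by \( i=i'+t \) and \( j=j'+t \) turns the plethystic argument into one containing \( -A_{(i'+t)-1}+A_{(j'+t)-1} \); since \( \phi_{\va}^t(A_m)=A_{m+t}-A_t \) and the common \( A_t \) cancels, this equals \( \phi_{\va}^t(-A_{i'-1}+A_{j'-1}) \), and \( \phi_{\va}^t \) fixes the \( X \) and \( B \) summands. Therefore this block coincides with \( \phi_{\va}^t \) applied to the matrix defining \( D'(\lambda^{(2)},\mu^{(2)},\vr^{(2)},\vs^{(2)},I^{(2)}) \), which yields the claimed identity for \( D' \).

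The only genuinely delicate point is tracking the \( A_t \) cancellation in the \( \phi_{\va}^t \) computation for the lower-right block; once that is checked, the rest is a direct translation of the argument of Lemma~\ref{lem:rec1-dual}, with the roles of \( \vb \) and \( \va \) (and correspondingly of column and row indexing) interchanged.
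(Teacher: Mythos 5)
Your proof is correct and follows the same route as the paper, which simply invokes the argument of Lemma~\ref{lem:rec1-dual}: the combinatorial factorization for \( M' \), block upper-triangularity of the matrix for \( D' \) (via \( \lambda_i-\mu_j-i+j\le -1 \) for \( i>t\ge j \), using \( \lambda_b\le\lambda_a+1 \) for dented partitions), and the computation \( \phi_{\va}^t(-A_{i'-1}+A_{j'-1})=-A_{i'+t-1}+A_{j'+t-1} \). One small imprecision: if the minimal cell lies in a row \( k>t+1 \), then \( (k,\lambda_k) \) sits in column \( \lambda_{t+1}+1 \), so your claim that every cell of rows \( t+1,\dots,n \) lies in a column \( \le\lambda_{t+1} \) fails; however, in that case \( \mu_t\ge\lambda_{t+1}=\lambda_t \) forces rows \( 1,\dots,t \) of \( \lm \) to be empty, so the disjointness of the two blocks, and hence the factorization, still holds.
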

\begin{proof}
  The proof follows from the same argument of the proof of Lemma~\ref{lem:rec1-dual}.
\end{proof}

\begin{lem}\label{lem:MRPP_rec_e}
  Let \( \lambda\in\DPar_n \) be a dented partition with the minimal cell
  \( (k,\lambda_k) \) and let \( \mu\in\Par_n \) be a partition with
  \( \mu\subseteq\lambda\setminus\{(k,\lambda_k)\} \).
  Let \( \vr,\vs\in \RPar_n \) with \( \vr\le\vs \).
  Suppose that \( I = \{1,2,\dots,p\} \) or \( I = \{1,2,\dots,p\}\setminus\{k\} \)
  for some integer \( k\le p\le \ell(\lambda) \) satisfying
  \[
    \lambda_1+1=\dots=\lambda_{k-1}+1=\lambda_k=\lambda_{k+1}=\dots=\lambda_p.
  \]
     If \( \lambda \) is not a partition and \( s_{k-1}=s_k \), then
     for \( F\in\{M,D\} \),
      \[
        F'(\lambda,\mu,\vr,\vs,I)
        =(x_{s_k}-\alpha_{k-1}+\chi(k\in I)\left( \beta_{\lambda_k} - x_{s_k}\right)) F'(\lambda-\epsilon_k,\mu,\vr,\vs,I\cup\{k\}).
      \]
     If \( \lambda \) is a partition or \( s_{k-1}<s_k \), then
     for \( F\in\{M,D\} \),
      \begin{multline*}
        F'(\lambda,\mu,\vr,\vs,I)
        =(x_{s_k}+\chi(k\in I)\left( \beta_{\lambda_k} - x_{s_k}\right)) F'(\lambda-\epsilon_k,\mu,\vr,\vs,I\cup\{k\}) \\
        +F'(\lambda,\mu,\vr,\vs-\epsilon_k,I\setminus\{k\}).
      \end{multline*}
\end{lem}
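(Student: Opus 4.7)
The proof will mirror Lemma~\ref{lem:rec_main1}, treating $F'=M'$ combinatorially and $F'=D'$ via a row identity. Both arguments hinge on the value $v=T(k,\lambda_k)$ at the minimal cell together with the phantom column inequality: since $\lambda_{k-1}=\lambda_k-1$, the cell $(k-1,\lambda_k)$ lies outside $\lm$ but coincides with the right phantom of row $k-1$, whose value is $s_{k-1}$ because $k-1\in I$ (by the hypothesis on $I$ for $k\ge 2$). The RPP inequality $T(k-1,\lambda_k)\le T(k,\lambda_k)$ then forces $v\ge s_{k-1}$.

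For $F'=M'$ in Case~1, combining $v\ge s_{k-1}=s_k$ with $v\le s_k$ forces $v=s_k$, and since $v=T(k-1,\lambda_k)$ the cell $(k,\lambda_k)$ is markable. Summing the marked and unmarked contributions at that cell yields exactly the factor $x_{s_k}-\alpha_{k-1}+\chi(k\in I)(\beta_{\lambda_k}-x_{s_k})$, and deleting the cell is a weight preserving bijection onto $\BMRPP^{\row(\vr,\vs)}_{I\cup\{k\}}((\lambda-\epsilon_k)/\mu)$ (the new right phantom at $(k,\lambda_k)$ records the deleted value $s_k$). In Case~2 (either $\lambda$ is a partition, removing the phantom above, or $s_{k-1}<s_k$) I split tableaux by $v=s_k$ versus $v<s_k$: the former is not markable (no matching phantom above) and contributes $x_{s_k}+\chi(k\in I)(\beta_{\lambda_k}-x_{s_k})$ times the reduced sum; the latter transfers bijectively to $\BMRPP^{\row(\vr,\vs-\epsilon_k)}_{I\setminus\{k\}}(\lm)$ with all weights, including the markability condition for $(k,\lambda_k)$ (namely $v=s_{k-1}$, unchanged), preserved. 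The $s_k=1$ ``moreover'' clause for $M'$ is immediate since the reduced row-$k$ flag $s_k-1=0$ empties the target set.

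For $F'=D'$ both recursions reduce to pointwise identities in row $k$, because all other rows coincide between the matrices on both sides (the $(i,j)$-entry for $i\ne k$ depends only on $\lambda_i,\chi(i\in I),s_i$, which are fixed). Let $W$ denote the plethystic argument of the $(k,j)$-entry of the reduced matrix and let $m=\lambda_k-\mu_j-k+j$; then the $(k,j)$-entry of the original matrix is (up to the common factor $\chi(r_j\le s_k)$) equal to $e_m[W+u_0]$, where $u_0=x_{s_k}+\chi(k\in I)(\beta_{\lambda_k}-x_{s_k})$ is a single atomic variable, so Lemma~\ref{lem:e_m[Z]} gives $e_m[W+u_0]=e_m[W]+u_0\,e_{m-1}[W]$. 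In Case~2 this yields the required row identity modulo the indicator gap $\chi(r_j\le s_k)-\chi(r_j\le s_k-1)=\chi(r_j=s_k)$, and the main technical obstacle is proving $\chi(r_j=s_k)\,e_m[W]=0$. Following the variable counting pattern in the vanishing step of Lemma~\ref{lem:rec2c}, when $r_j=s_k$ the reverse-partition structure of $\vr,\vs$ together with the Case~2 hypothesis forces $j\ge k$, hence $\mu_j\le\mu_k\le\lambda_k-1$, so $W$ becomes a plethystic sum of precisely $m-1$ atomic $\alpha$'s and $\beta$'s and $e_m[W]=0$ by degree. In Case~1 I additionally identify the unchanged $(k-1,j)$-entry as $\chi(r_j\le s_k)\,e_m[V]$ with $V=W+\alpha_{k-1}$ (using $s_{k-1}=s_k$ and $\lambda_{k-1}=\lambda_k-1$), rewrite $e_m[W+u_0]=e_m[V]+u\,e_{m-1}[W]$ with $u=u_0-\alpha_{k-1}$ the announced multiplier, and conclude by subtracting the $(k-1)$-th row from the $k$-th row (determinant invariant) and then extracting $u$ from the new $k$-th row. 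The $s_k=1$ clause for $D'$ is immediate since $\chi(r_j\le s_k-1)=0$ makes row $k$ of the relevant matrix vanish.
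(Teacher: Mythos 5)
Your proposal is correct and follows essentially the same route as the paper: for $D'$ you reduce to a single row-$k$ identity, prove it via Lemma~\ref{lem:e_m[Z]} (your packaging of the perturbation as one atomic variable $u_0$ is a minor streamlining of the paper's two-step computation), subtract row $k-1$ in Case~1, and dispose of the indicator gap $\chi(r_j=s_k)$ by the same $j\ge k$ variable-counting argument; for $M'$ you supply the combinatorial case analysis at the minimal cell that the paper declares immediate, correctly using the phantom entries both for the column inequality pinning $v=s_k$ and for markability. The only cosmetic slip is that you address an ``$s_k=1$ moreover clause'' that this lemma does not actually contain.
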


\begin{proof}
  The case \( F=M \) can be proved similarly as in the proof of
  Lemma~\ref{lem:rec_main1}. Hence, we only consider the case
  \( F=D \).

  For the first assertion, suppose that \( \lambda \) is not a partition and \(
  s_{k-1}=s_k \). Following the argument in the proof of
  Lemma~\ref{lem:rec_main1}, we only need to prove that for \( 1\le j\le n \),
  \begin{equation}  \label{eq:eIjk-claim}
    e_{\lambda,\mu}^{\vr,\vs}(I;k,j) = e_{\lambda,\mu}^{\vr,\vs}(I;k-1,j) + (x_{s_k}-\alpha_{k-1}+\chi(k\in I)(\beta_{\lambda_k}-x_{s_k})) e_{\lambda-\epsilon_k,\mu}^{\vr,\vs}(I\cup\{k\};k,j),
  \end{equation}
  which is an analogue of \eqref{eq:hIkj-claim}.
  Let
  \[
    m=\lambda_k-\mu_j+j-k \qand
    Z=X_{[r_j,s_k]}-A_{k-1}+A_{j-1}+B_{\lambda_k-1}-B_{\mu_j},  
  \]
  then, since \( s_{k-1}=s_k \)  and \(  \lambda_{k-1}=\lambda_k-1 \), we can rewrite each term in \eqref{eq:eIjk-claim} as
  \begin{align*}
    e_{\lambda,\mu}^{\vr,\vs}(I;k,j)
      &= \chi(r_j\le s_k) e_m[Z+\chi(k\in I)(\beta_{\lambda_k}-x_{s_k})], \\
    e_{\lambda,\mu}^{\vr,\vs}(I;k-1,j)
      &= \chi(r_j\le s_k) e_m[Z-x_{s_k}+\alpha_{k-1}], \\
    e_{\lambda-\epsilon_k,\mu}^{\vr,\vs}(I\cup\{k\};k,j)
      &= \chi(r_j\le s_k) e_{m-1}[Z-x_{s_k}].
  \end{align*}
  By iterating Lemma~\ref{lem:e_m[Z]},
\[
  e_m[Z+\beta_{\lambda_k}-x_{s_k}]
  = e_m[Z-x_{s_k}] +\beta_{\lambda_k}e_{m-1}[Z- x_{s_k}]
  = e_m[Z]+(\beta_{\lambda_k}-x_{s_k}) e_{m-1}[Z- x_{s_k}],
\]
  and thus we can write
  \begin{equation} \label{eq:e_m[Z+chi]}
    e_m[Z+\chi(k\in I)(\beta_{\lambda_k}-x_{s_k})]
      = e_m[Z]+\chi(k\in I) (\beta_{\lambda_k}-x_{s_k}) e_{m-1}[Z- x_{s_k}].
  \end{equation}
  By iterating Lemma~\ref{lem:e_m[Z]}, we similarly obtain
  \[
    e_m[Z] = e_m[Z-x_{s_k}+\alpha_{k-1}] +
    (x_{s_k}-\alpha_{k-1})e_{m-1}[Z-x_{s_k}].
  \] 
  By \eqref{eq:e_m[Z+chi]} and the above equation,
  \( e_{\lambda,\mu}^{\vr,\vs}(I;k,j) \) is equal to 
  \begin{align*}
      &\chi(r_j\le s_k) \left( e_m[Z] + \chi(k\in I)(\beta_{\lambda_k}-x_{s_k})e_{m-1}[Z-x_{s_k}] \right) \\
      &= \chi(r_j\le s_k) \left( e_m[Z-x_{s_k}+\alpha_{k-1}] + (x_{s_k}-\alpha_{k-1}+\chi(k\in I)(\beta_{\lambda_k}-x_{s_k}))e_{m-1}[Z-x_{s_k}] \right) \\
      &= e_{\lambda,\mu}^{\vr,\vs}(I;k-1,j) + (x_{s_k}-\alpha_{k-1} + \chi(k\in I)(\beta_{\lambda_k}-x_{s_k})) e_{\lambda-\epsilon_k,\mu}^{\vr,\vs}(I\cup\{k\};k,j),
  \end{align*}
  which shows the first assertion.
  
  For the second assertion, suppose that \( \lambda \) is a partition or \( s_{k-1} < s_k \). Similar to the first case, it suffices to show that for \( 1\le j\le n \),
  \begin{equation}\label{eq:eIkj-claim2}
    e_{\lambda,\mu}^{\vr,\vs}(I;k,j) = e_{\lambda,\mu}^{\vr,\vs-\epsilon_k}(I\setminus\{k\};k,j)
      + (x_{s_k}+\chi(k\in I)(\beta_{\lambda_k}-x_{s_k})) e_{\lambda-\epsilon_k,\mu}^{\vr,\vs}(I\cup\{k\};k,j).
  \end{equation}
  Using the terms \( Z \) and \( m \) defined above, we can write
  \begin{align*}
    e_{\lambda,\mu}^{\vr,\vs}(I;k,j)
    &= \chi(r_j\le s_k) e_m[Z+\chi(k\in I)(\beta_{\lambda_k}-x_{s_k})], \\
    e_{\lambda,\mu}^{\vr,\vs-\epsilon_k}(I\setminus\{k\};k,j)
      &= \chi(r_j\le s_k-1) e_m[Z-x_{s_k}], \\
    e_{\lambda-\epsilon_k,\mu}^{\vr,\vs}(I\cup\{k\};k,j) 
      &= \chi(r_j\le s_k) e_{m-1}[Z-x_{s_k}].
  \end{align*}
  Then by \eqref{eq:e_m[Z+chi]} and Lemma~\ref{lem:e_m[Z]}, we obtain
  \begin{align*}
    e_{\lambda,\mu}^{\vr,\vs}(I;k,j)
      &= \chi(r_j\le s_k) \left( e_m[Z] + \chi(k\in I)(\beta_{\lambda_k}-x_{s_k}) e_m[Z-x_{s_k}] \right) \\
      &= \chi(r_j\le s_k) \left( e_m[Z-x_{s_k}] + (x_{s_k}+\chi(k\in I)(\beta_{\lambda_k}-x_{s_k}))e_{m-1}[Z-x_{s_k}] \right) \\
      &= \chi(r_j\le s_k) e_m[Z-x_{s_k}] + (x_{s_k}+\chi(k\in I)(\beta_{\lambda_k}-x_{s_k})) e_{\lambda-\epsilon_k,\mu}^{\vr,\vs}(I\cup\{k\};k,j).
  \end{align*}
  Hence, to prove \eqref{eq:eIkj-claim2} it is enough to show
  \[
    \chi(r_j\le s_k-1) e_m[Z-x_{s_k}] = \chi(r_j\le s_k) e_m[Z-x_{s_k}].
  \]
  Obviously, if \( r_j\neq s_k \), then the both sides are the same. If \( r_j=s_k \), the above equation is equivalent to
  \[
    e_{\lambda_k-\mu_j+j-k}[-A_{k-1}+A_{j-1}+B_{\lambda_k-1}-B_{\mu_j}] = 0.
  \]
  This can be prove similarly as \eqref{eq:chi-sk-1}, and thus we obtain \eqref{eq:eIkj-claim2}.
\end{proof}

Finally we are ready to prove Proposition~\ref{prop:MRPPI_e}. This can be done
by the same arguments in the proof of Proposition~\ref{prop:MRPPI} with
Lemmas~\ref{lem:LHS=RHS=0e}, \ref{lem:LHS=RHS=1e}, \ref{lem:MRPP_rec_e}, and
\ref{lem:rec1-dual_e} in place of Lemmas~\ref{lem:LHS=RHS=0},
\ref{lem:LHS=RHS=1}, \ref{lem:rec_main1} and \ref{lem:rec1-dual}.
We omit the details.

\section*{Acknowledgments}
The authors thank Travis Scrimshaw for many helpful comments and fruitful discussions, and Darij Grinberg for valuable feedback. They are also grateful to the anonymous referees for their careful reading and thoughtful suggestions.

The authors were supported by the National Research Foundation of
Korea (NRF) grants \#2022R1A2C101100911 and \#2016R1A5A10080.
Byung-Hak Hwang was supported by a KIAS Individual Grant (MG098201) at Korea Institute for Advanced Study.
Minho Song was supported by the National Research Foundation of Korea (NRF)
grant funded by the Korea government (MSIT) (No. 2022R1C1C2009025).

\appendix
\section{Marked rim border tableaux}
\label{sec:RBT}

In this appendix, we introduce marked rim border tableaux and provide
a weight-preserving bijection between marked rim border tableaux and
marked RPPs. This shows that Theorem~\ref{thm:MRPP} generalizes
Yeliussizov's combinatorial model for the dual Grothendieck polynomial
\( g^{(\alpha,\beta)}_\lambda(\vx) \).

  \begin{defn}\label{def:marked_rim}
    For a reverse plane partition \( T \), let \( T_i \) denote the
    part of \( T \) consisting of cells with entry \( i \). Define the
    \emph{border} \( R_i \) of \( T_i \) as the part consisting of
    cells \( c \) where the upper-left cell of \( c \) is not in
    \( T_i \). The \emph{inner part} of \( T_i \) is defined to be
    \( T_i\setminus R_i \). Each border \( R_i \) can be partitioned
    by cutting along a vertical side of a cell. The resulting cut
    blocks are called \emph{rim hooks}. A \emph{marked rim border
      tableau} is a reverse plane partition \( T \) in which for each
    \( i \), the border \( R_i \) of \( T_i \) is partitioned into rim
    hooks and each cell in the inner part \( T_i\setminus R_i \) can
    be marked. We denote by \( \MRBT(\lm) \) the set of marked rim
    border tableaux of shape \( \lm \).
  \end{defn}
 
  \begin{defn}\label{def:weight_marked_rim}
    For a marked rim border tableau \( T \), we define the \emph{weight} \( \wt(T) \)
    to be the product of \( \wt(c) \) for all cells \( c \) in \( T \),
    where \( \wt(c) \) is defined as follows.
    \begin{itemize}
    \item If \( c \) is in a rim hook \( R \) with entry \( i \), then
      \[ \wt(c) = 
      \begin{cases}
        \alpha & \mbox{ if the right cell of \( c \) is in \( R \), }\\
        \beta  & \mbox{ if the upper cell of \( c \) is in \( R \), }\\
        x_i & \mbox{ otherwise. } 
      \end{cases}
    \]
  \item If \( c \) is in an inner part, then
      \[ \wt(c) = 
      \begin{cases}
        \alpha & \mbox{ if \( c \) is marked, }\\
        \beta  & \mbox{ if \( c \) is not marked. }
      \end{cases}
    \]
    \end{itemize}
  \end{defn}
 
  Yeliussizov's result can be restated as follows.

\begin{thm}\cite[Theorem~7.2]{Yeliussizov2017}
   Let \( \va = (-\alpha,-\alpha,\dots) \) and
   \( \vb = (\beta,\beta,\dots) \).
   Then
\[
    g_\lambda(\vx;\va,\vb) = \sum_{T\in\MRBT(\lambda)} \wt(T).
\]
\end{thm}
  
The following proposition with \( \mu = \emptyset \) shows that
Theorem~\ref{thm:MRPP} generalizes the above result of Yeliussizov.
  
   \begin{prop}\label{pro:bij_rim_RPP}
   Let \( \va = (-\alpha,-\alpha,\dots) \) and
   \( \vb = (\beta,\beta,\dots) \).
   Then, there is a weight-preserving
   bijection between \( \MRBT(\lm) \) and
   \( \MRPP(\lm) \).
 \end{prop}
 \begin{proof}
   We establish a bijection \( \varphi: \MRBT(\lm)\to \MRPP(\lm) \) as
   follows. Let \( T\in \MRBT(\lm) \). As before, let \( T_i \) be the
   part of \( T \) consisting of cells with entry \( i \), and let
   \( R_i \) be its border.

   For each \( i \), we shift all cells in the inner part
   \( T_i\setminus R_i \) to the left by one. Next, we move each
   overlapping cell in \( R_i \) to the rightmost cell in the same row
   in \( T_i \). Then we assign a mark to each cell with
   \( \alpha \). We define \( \varphi(T) \) to be the resulting
   tableau; see~\Cref{fig:rim}.

   Note that all overlapping cells in \( R_i \) in the process are in
   a rim border with weight \( \beta \). Thus, the resulting tableau
   \( \varphi(T) \) is a marked RPP. One can see that this gives a
   weight-preserving bijection.
 \end{proof}
 
  \begin{figure}
    \centering
\begin{tikzpicture}[scale=0.6, every node/.style={minimum size=1.5em, font=\large, anchor=center}]
  \foreach \x/\y in {
    6/0,                 
    6/1,               
    3/2, 4/2, 5/2, 6/2, 
    1/3, 2/3, 3/3,  
    1/4,       
    1/5      
  }{
    \fill[gray!20] (\x, -\y) rectangle ++(1, -1); 
  }
  \foreach \x/\y/\label in {
    6/0/\( x_i \),                 
    6/1/\( \beta \),               
    3/2/\( \alpha \), 4/2/\( x_i \), 5/2/\( \alpha \), 6/2/\( \beta \), 
    1/3/\( x_i \), 2/3/\( \alpha \), 3/3/\( \beta \), 4/3/\( \beta \), 5/3/\( \alpha \), 6/3/\( \alpha \),
    1/4/\( \beta \),2/4/\( \alpha \),3/4/\( \beta \), 4/4/\( \beta \),5/4/\( \alpha \),
    1/5/\( \beta \),2/5/\( \alpha \),3/5/\( \alpha \),4/5/\( \alpha \), 5/5/\( \beta \)
  }{
    \node at (\x+0.5, -\y-0.5) {\label}; 
  }
  \draw[thick] (1,-6) -- (1,-3) -- (3,-3) -- (3,-2) -- (6,-2) -- (6,0) -- (7,0) -- (7,-4) -- (6,-4) -- (6,-6) -- (1,-6);
  \draw[thick] (2,-6) -- (2,-3);
  \draw[thick] (2,-4) -- (4,-4) -- (4,-3) -- (5,-3) --(5,-2);
  \draw[thick] (5,-3) -- (7,-3);
  \begin{scope}[xshift=10cm]
\foreach \x/\y in {
    6/0,                 
    6/1,               
    3/2, 4/2, 5/2, 6/2, 
    1/3, 2/3, 6/3,  
    5/4,       
    5/5      
  }{
    \fill[gray!20] (\x, - \y) rectangle ++(1, -1); 
  }
  \foreach \x/\y/\label in {
    6/0/\( x_i \),                 
    6/1/\( \beta \),               
    3/2/\( \alpha \), 4/2/\( x_i \), 5/2/\( \alpha \), 6/2/\( \beta \), 
    1/3/\( x_i \), 2/3/\( \alpha \), 6/3/\( \beta \), 3/3/\( \beta \), 4/3/\( \alpha \), 5/3/\( \alpha \),
    1/4/\( \alpha \),2/4/\( \beta \), 3/4/\( \beta \),4/4/\( \alpha \), 5/4/\( \beta \),
    1/5/\( \alpha \),2/5/\( \alpha \),3/5/\( \alpha \),4/5/\( \beta \), 5/5/\( \beta \)
  }{
    \node at (\x+0.5, -\y-0.5) {\label}; 
  }
  \draw[thick] (1,-6) -- (1,-3) -- (3,-3) -- (3,-2) -- (6,-2) -- (6,0) -- (7,0) -- (7,-4) -- (6,-4) -- (6,-6) -- (1,-6);
  \draw[thick] (1,-4) -- (2,-4) -- (2,-3);
  \draw[thick] (2,-4) -- (3,-4) -- (3,-3) -- (5,-3) --(5,-2);
  \draw[thick] (5,-3) -- (7,-3);
  \draw[thick] (6,-4) --(6,-3);
  \draw[thick] (5,-6) --(5,-4) -- (6,-4);
  \end{scope}
\end{tikzpicture}
\caption{ The left tableau shows the part \( T_i \) in a rim border
  tableau \( T \), where the border is gray and the inner part is
  white. Each inner cell has weight \( \alpha \) if marked, \( \beta \)
  otherwise. The right shows the corresponding part with entry \( i \)
  in the marked RPP \( \varphi(T) \), where the marked cells are
  those containing an \( \alpha \).}
    \label{fig:rim}
  \end{figure}

\bibliographystyle{abbrv}

\end{document}